\documentclass[reqno, 12pt]{article}

\usepackage{enumerate}
\usepackage{needspace}
\usepackage{latexsym}
\usepackage{amsmath}
\usepackage{amsfonts}
\usepackage{amsthm}
\usepackage{amssymb,mathtools}
\usepackage{newlfont}
\usepackage{graphics}
\usepackage{color}
\usepackage{float}
\usepackage{diagbox}
\usepackage{tocloft}
\usepackage{titlesec}
\usepackage{booktabs,array,longtable,enumitem,microtype}
\usepackage{extpfeil}
\usepackage{centernot}

\usepackage[linesnumbered,ruled,vlined]{algorithm2e}
\usepackage{url}

\usepackage[margin=1in]{geometry}

\usepackage[T1]{fontenc}
\usepackage{lmodern}
\usepackage{microtype}
\usepackage{rotating}
\usepackage{multirow}
\usepackage{extarrows}
\usepackage[sort,compress,numbers]{natbib}
\usepackage[utf8]{inputenc}
\usepackage{xcolor}
\usepackage{listings}
\usepackage{aliascnt}
\usepackage[pagebackref,colorlinks=true,linkcolor=blue,citecolor=red,urlcolor=blue]{hyperref}
\usepackage[nameinlink,noabbrev,capitalize]{cleveref}
\numberwithin{equation}{section}

\setlist[enumerate]{label=\textup{(\roman*)},leftmargin=*}
\newtheorem{theorem}{Theorem}[section]
\newaliascnt{proposition}{theorem}
\newtheorem{proposition}[proposition]{Proposition}
\aliascntresetthe{proposition}
\newaliascnt{lemma}{theorem}
\newtheorem{lemma}[lemma]{Lemma}
\aliascntresetthe{lemma}
\newaliascnt{corollary}{theorem}
\newtheorem{corollary}[corollary]{Corollary}
\aliascntresetthe{corollary}

\theoremstyle{definition}
\newaliascnt{definition}{theorem}

\aliascntresetthe{definition}
\newaliascnt{remark}{theorem}
\newtheorem{remark}[remark]{Remark}
\aliascntresetthe{remark}
\newaliascnt{example}{theorem}
\newtheorem{example}[example]{Example}
\aliascntresetthe{example}
\newaliascnt{conjecture}{theorem}

\aliascntresetthe{conjecture}
\newaliascnt{question}{theorem}
\newtheorem{question}[question]{Question}
\aliascntresetthe{question}
\newaliascnt{problem}{theorem}

\aliascntresetthe{problem}
\newaliascnt{alg}{theorem}

\aliascntresetthe{alg}

\allowdisplaybreaks[4]

\SetKwInput{KwInput}{Input}                
\SetKwInput{KwOutput}{Output}              

\newcommand{\Z}{\mathbb Z}
\newcommand{\R}{\mathbb R}

\newcommand{\conv}{\operatorname{conv}}
\newcommand{\aff}{\operatorname{aff}}
\newcommand{\lin}{\operatorname{lin}}
\newcommand{\relint}{\operatorname{relint}}
\newcommand{\codeg}{\operatorname{codeg}}
\newcommand{\Cay}{\operatorname{Cayley}}
\newcommand{\pyr}{\operatorname{pyr}}
\newcommand{\ULC}{\operatorname{ULC}}
\newcommand{\Ehr}{\operatorname{Ehr}}
\newcommand{\Vol}{\operatorname{Vol}}
\newcommand{\one}{\mathbf 1}

\title{Ehrhart Properties under Operations on Lattice Polytopes}

\author{Feihu Liu
\\[2mm]
{\small Center for Combinatorics, LPMC,}\\[-0.8ex]
{\small Nankai University, Tianjin 300071, P.R.~China}\\
{\small Email address: \url{liufeihu7476@163.com}}\\
}

\date{\today}

\begin{document}

\maketitle

\begin{abstract}
This paper investigates the preservation of three classes of Ehrhart properties under various operations on lattice polytopes. These operations include Cartesian products, lattice joins, lattice pyramids, free sums, Minkowski sums, Cayley sums, reflexive polarity, and integral dilations. Specifically, we consider the following properties frequently studied in Ehrhart theory:

(i): Positivity of Ehrhart coefficients, including Ehrhart positivity and magic positivity. In particular, we present lattice point counting formulas for the polytopes generated by these operations.

(ii): Coefficient properties of the $h^*$-polynomial, including symmetry, unimodality, log-concavity, ultra log-concavity, real-rootedness, and $\gamma$-positivity.

(iii): Geometric properties, including the spanning property, the integer decomposition property, very ampleness, and the existence of unimodular triangulation, regular unimodular triangulation, and quadratic triangulation.

We determine which properties are preserved under these eight operations, establishing preservation theorems or constructing explicit counterexamples. Furthermore, when a property is not preserved in general, we investigate sufficient or equivalent conditions for its preservation.
\end{abstract}

\noindent
\begin{small}
\emph{2020 Mathematics subject classification}: Primary 52B20; Secondary 05A15, 05A20, 52B05, 52B11.
\end{small}

\noindent
\begin{small}
\emph{Keywords}: Lattice polytope; Ehrhart polynomial; Ehrhart positivity; Magic positivity; $h^*$-polynomial; Log-concavity; Real-rootedness; Integer decomposition property; Unimodular triangulation; Quadratic triangulation; Cartesian product; lattice join; Pyramid; Free sum; Minkowski sum; Cayley sum; Reflexive polarity.
\end{small}

\tableofcontents

\section{Introduction}\label{sec:definitions}

\subsection{Basic concepts}

A \emph{lattice polytope} in $\R^d$ is the convex hull of a finite nonempty subset of $\Z^d$. 
Its dimension is its affine dimension; a lattice $d$-polytope has dimension $d$. 
Lattice points are points with integer coordinates. 

When some ploytopes are lower dimensional $P\subset\R^N$ with $d=\dim P<N$, their lattice points are counted in the displayed ambient space. When full dimensional coordinates are needed, we use an affine map from $\aff(P)$ to $\R^d$ that maps
$\aff(P)\cap\Z^N$ bijectively onto $\Z^d$. We call these \emph{integer affine coordinates} on $\aff(P)$.

For a lattice $d$-polytope $P\subset\R^d$, its \emph{lattice point enumerator}
$$ L_P(n)=|nP\cap\Z^d|,\quad (n\in \mathbb{N})$$
counts the number of integer points in the $n$-th dilation $nP = \{ n\alpha : \alpha \in P \}$ of $P$.
For $n=0$, the dilate is the singleton $\{0\}$.
Ehrhart~\cite{Ehrhart} proved that the function $L_P(n)$ is a polynomial in $n$ of degree $d$ with constant term $1$.
This polynomial is also called the \emph{Ehrhart polynomial} of $P$.
The \emph{Ehrhart series} of $P$ is $\operatorname{Ehr}_P(z)=\sum_{n\geq0}L_P(n)z^n$.
It has the rational form
\begin{equation*}
\operatorname{Ehr}_P(z)=\frac{h_P^*(z)}{(1-z)^{d+1}},\qquad h_P^*(z)=\sum_{i=0}^{d}h_i^*(P)z^i.
\end{equation*}
The numerator $h_P^*(z)$ is the \emph{$h^*$-polynomial} of $P$, and its coefficient vector is the \emph{$h^*$-vector}. 

The conversion formulas between the Ehrhart polynomial and the $h^*$-polynomial are as follows:
\begin{equation}\label{eq:conversion}
 L_P(n)=\sum_{i=0}^{d}h_i^*(P)\binom{n+d-i}{d},\qquad h_i^*(P)=\sum_{j=0}^{i}(-1)^j\binom{d+1}{j}L_P(i-j).
\end{equation}

In some places, we also use the following two notions. The \emph{degree} of $P$ is $s_P=\deg h_P^*(z)$, and its
\emph{codegree} is
\[r_P=\codeg(P)=d+1-s_P=\min\{n\geq1:\relint(nP)\cap\Z^d\ne\varnothing\},
\]
where $\relint(P)$ denote its relative interior. 
In particular, the degree of $P$ need not equal its dimension.
The Ehrhart--Macdonald reciprocity theorem \cite{BR} states that
\[|\relint(nP)\cap\Z^d|=(-1)^dL_P(-n)\qquad(n\geq1).
\]
Furthermore, the polynomial $h^*(x)$ satisfies \cite[Lemma 3.13; Corollary 3.16; Exercise 4.9]{BR}
\begin{align*}
h_0^*(P)=1,\quad h_1^*(P)=|P\cap\Z^d|-d-1,\quad h_d^*(P)=|\relint(P)\cap\Z^d|,\quad \sum_{i=0}^d h_i^*(P)=\Vol(P),
\end{align*}
where $\Vol(P)=d!\operatorname{vol}_d(P)$ is normalized volume, with $\operatorname{vol}_d$ denoting ordinary Euclidean volume in $\R^d$.
For a lower dimensional polytope, use integer affine coordinates on its affine span. The leading coefficient of $L_P(n)$ is
$\Vol(P)/d!$, not $\Vol(P)$.

For later explicit calculations, we also recall the fundamental parallelepiped formula \cite[Chapter~3]{BR}. If
$S=\conv(v_0,\ldots,v_d)$ is a lattice simplex, set
\[\Pi_S=\left\{\sum_{i=0}^d\lambda_i(v_i,1):0\leq\lambda_i<1\right\}.
\]
Then $h_S^*(z)$ is the sum of $z^k$ over the lattice points of $\Pi_S$, where $k$ is the last coordinate. There are exactly $\Vol(S)$ such points.

Let $P \subset \R^d$ be a lattice $d$-polytope containing the origin in its relative interior, $0 \in \relint(P)$. 
The \emph{polar dual} of $P$ is defined by
\[P^* = \{y \in \R^d : \langle y, x \rangle \leq 1 \text{ for all } x \in P\},
\]
where $\langle \cdot, \cdot \rangle$ denotes the standard Euclidean inner product. 
We say $P$ is \emph{reflexive} if $P^*$ is also a lattice polytope, meaning all vertices of $P^*$ lie in $\Z^d$. 
Equivalently, $P$ is reflexive if every facet of $P$ is supported by a hyperplane of the form $\langle u, x \rangle = 1$ for some primitive normal vector $u \in \Z^d$.

More generally, a lattice polytope is defined to be \emph{Gorenstein of index $r$} if its $r$-th dilation $rP$ possesses a unique relative interior lattice point $v$, and the translation $rP - v$ is reflexive with respect to the induced lattice on its linear span. Hibi's palindromic theorem \cite{Hibi,BRun} establishes that $P$ is Gorenstein if and only if its $h^*$-polynomial $h_P^*(z)$ is symmetric. In this case, the index coincides with the codegree, given by $r_P = d + 1 - s_P$. As a fundamental consequence, a polytope is reflexive if and only if it is Gorenstein of index $r_P = 1$, which forces $s_P = d$.

For further background on lattice polytopes, we refer to several excellent books~\cite{BR}, \cite[Chapter 4]{RP.Stanley}, and \cite{Ziegler}.

\subsection{Motivation}

Our main motivation comes from the following well-known open problem, which was recently resolved negatively: whether the $h^*$-polynomial of a lattice polytope with the integer decomposition property (IDP) is unimodal.
Adiprasito, Papadakis, Petrotou, and Steinmeyer~\cite{APPS22} proved that every Gorenstein IDP lattice polytope has a unimodal $h^*$-polynomial, also see~\cite{APP}.
Ferroni~\cite{Ferroni2026} constructed a family of counterexamples via the Cayley sum of polytopes, showing that the $h^*$-polynomial of a lattice polytope with the integer decomposition property is not necessarily unimodal.

Although most of the results in this paper had already been obtained before the author became aware of Cayley sum, the construction of Ferroni directly motivated the author to complete the paper.
This prompted the author to ask whether the Cayley sum operation on two lattice polytopes $P$ and $Q$ yields a lattice polytope that inherits some of the ``nice properties'' of $P$ and $Q$.
The term ``nice properties'' here refers mainly to the following three classes of properties often studied in Ehrhart theory for lattice polytopes: \emph{positivity of Ehrhart coefficients}, \emph{coefficient properties of the $h^*$-polynomial}, and \emph{geometric properties}.

Under two different choices of bases, one writes 
$$L_P(n)=c_dn^d+\cdots+ c_1n+ c_0=\sum_{i=0}^d b_i n^i(n+1)^{d-i}.$$
The \textbf{positivity} referred to here concerns the following two widely studied properties of Ehrhart polynomials.
\begin{enumerate}
\item A lattice polytope $P$ of dimension $d$ is said to be \emph{Ehrhart positive} if $c_i>0$ for all $i$.

\item A lattice polytope $P$ of dimension $d$ is said to be \emph{magic positive} if $b_i \ge 0$ for all $i$.
\end{enumerate}

It is well known \cite[Corollary 3.20; Theorem 5.6]{BR} that $c_d$ equals the volume of $P$, $c_{d-1}$ equals half of the boundary volume of $P$, and the constant term is $c_0=1$. Therefore, these three coefficients are always positive.
The remaining coefficients are more intricate and lack a straightforward geometric interpretation \cite{McMullen77}.
Reeve's tetrahedra provide the classical non-Ehrhart positive examples \cite{Reeve}.
We refer to \cite{Liu} for a good survey on Ehrhart positivity.

For the magic positivity, the magic coefficients are unique. They can be read from the polynomial
\begin{equation*}
 (1-x)^d L_P\!\left(\frac{x}{1-x}\right)=\sum_{i=0}^d b_ix^i.
\end{equation*}
For an Ehrhart polynomial, $a_0=f(0)=1$. 
The magic positivity implies Ehrhart positivity. 
Magic positivity also implies real-rootedness of $h_P^*(z)$ by the result of Br\"and\'en \cite{Branden06}. 
The converse is false; a unimodular triangle already has $h^*(z)=1$ and is not magic positive.
Some results on the magic positivity of polytopes are found in \cite{Avila26,Ferroni-Morales2026,LiuZhangMagic,Konoike,HillTrinh}.

Stanley~\cite{StanleyDec} proved that the coefficients of $h^*$-polynomial are nonnegative integers.
Regarding the \textbf{coefficient properties of the $h^*$-polynomial}, we mainly mention the following properties.
Let $h(z)=\sum_{i=0}^s a_i z^i$, where $a_i\geq0$ and $a_0a_s>0$. We use the following conventions.
\begin{enumerate}
\item $h(z)$ is \emph{symmetric} if $a_i=a_{s-i}$ for all $i$, or equivalently $z^sh(z^{-1})=h(z)$. Symmetry is about the actual degree $s$, not about an ambient dimension obtained by appending zeros.

\item $h(z)$ is \emph{unimodal} if $a_0\leq\cdots\leq a_k\geq\cdots\geq a_s$ for some $k$.

\item $h(z)$ is \emph{log-concave} if it has no internal zeros and $a_i^2\geq a_{i-1}a_{i+1}$ for $1\leq i<s$. ``No internal zeros'' means that the indices with positive coefficients form an interval.

\item For an integer $D \geq s$, $h(z)$ belongs to $\ULC(D)$ if it has no internal zeros and the sequence
   \[\left( \frac{a_i}{\binom{D}{i}} \right)_{i=0}^s\]
    is log-concave. Unless specified otherwise, when a $d$-polytope $P$ is described as \emph{ultra log-concave}, we implicitly mean $h_P^*(z) \in \ULC(d)$. We explicitly declare the parameter whenever the stronger condition $h_P^* \in \ULC(s_P)$ is under consideration.

\item $h(z)$ is \emph{real-rooted} if all its complex zeros are real. For a nonconstant $h^*$-polynomial these zeros are negative. The constant polynomial is real-rooted by convention.

\item A symmetric polynomial of degree $s$ is \emph{$\gamma$-positive} if its unique expansion 
     \[ h(z)=\sum_{j=0}^{\lfloor s/2\rfloor}\gamma_j z^j(1+z)^{s-2j}\]
      has $\gamma_j\geq0$ for every $j$. Thus symmetry is part of $\gamma$-positivity.
\end{enumerate}

By Newton's inequalities \cite[Section~5]{RP.StanleyAC}, we know the following implicit relation
\[ \text{real-rootedness}\ \Longrightarrow\ \ULC(s) \ \Longrightarrow\ \text{log-concavity}\ \Longrightarrow\ \text{unimodality}.
\]
Moreover, $\gamma$-positivity implies symmetric unimodality. A symmetric real-rooted polynomial with nonnegative coefficients is $\gamma$-positive; see \cite{StanleyLC}. 
We refer to \cite{Bra16,FH} as good surveys on the coefficient properties of the $h^*$-polynomial.

For a lattice $d$-polytope $P\subset\R^d$, set $p_0\in P\cap\Z^d$. 
A \emph{lattice triangulation} of $P$ is a finite collection $\mathcal{T}_P$ of lattice simplices, closed under taking faces, whose union is $P$ and whose pairwise intersections are common faces.
Regarding the \textbf{geometric properties} of lattice polytope, we mainly mention the following properties.
It includes both the decomposition property and the triangulation property.
\begin{enumerate}
\item The polytope $P$ is \emph{spanning} if the subgroup generated by $\{u-p_0:u\in P\cap\Z^d\}$ is $\Z^d$.

\item We say that $P$ possesses the \emph{integer decomposition property} (IDP) if, for any positive integer $n$, every lattice point $u\in nP\cap\Z^d$ can be expressed as a sum $u = u_1 + \cdots + u_n$, where each $u_i \in P\cap\Z^d$.

\item The polytope $\mathcal{P}$ is called \emph{very ample} if it satisfies the integer decomposition property for all sufficiently large $n$.

\item $\mathcal{T}_P$ is \emph{unimodular} if every maximal simplex has normalized volume one.

\item $\mathcal{T}_P$ is \emph{regular} if it is obtained by lifting the vertices of the triangulation to real heights and projecting the lower faces of their convex hull. These vertices may include nonvertex lattice points of $P$. 
    
\item $\mathcal{T}_P$ is termed \emph{flag} if its underlying simplicial complex is a flag complex, i.e., every minimal non-face has cardinality two.

\item $\mathcal{T}_P$ is called a \emph{quadratic triangulation} if it is simultaneously regular, unimodular, and flag. This is stronger than merely requiring a quadratic generating set for a toric ideal.
\end{enumerate}

By \cite[Theorem~2.1]{FH} and \cite{BG}, we know the following implicit relation
\begin{align*}
 \text{quadratic triangulation}&\Longrightarrow\text{regular unimodular triangulation} 
 \\ &\Longrightarrow\text{unimodular triangulation}
 \\ &\Longrightarrow\text{IDP}\Longrightarrow\text{very ample}\Longrightarrow\text{spanning}.
\end{align*}
It is worth pointing out that each of the above implications fails in the reverse direction, see \cite[Section~2.1]{FH}.
For a detailed treatment on triangulations, we refer to~\cite{DeLoera2010,HPPS}.

\subsection{Main results}

Throughout the remainder, we shall assume the following definition unless explicitly stated otherwise.
Let $P\subset\R^p$ and $Q\subset\R^q$ be a lattice $p$-polytope and a lattice $q$-polytope, respectively.
Write $e_i$ for a standard basis vector and $\Delta_d=\conv(0,e_1,\ldots,e_d)$ for a unimodular simplex.

Beyond the Cayley sum for two polytopes, we also systematically study the preservation properties of the following eight classes of operations on lattice polytopes:
\begin{enumerate}
\item Cartesian product: $P\times Q=\{(x,y):x\in P,\ y\in Q\}\subset\R^{p+q}$.

\item Lattice join: $P*Q=\conv\bigl((P,0,0)\cup(0,Q,1)\bigr)\subset\R^{p+q+1}$.

\item Lattice pyramid: $\pyr(P)=\conv\bigl((P,0),(0,1)\bigr)\subset\R^{p+1}$.

\item Free sum: $P\oplus Q=\conv\bigl((P,0)\cup(0,Q)\bigr)\subset\R^{p+q}$.

\item Minkowski sum: $P+Q=\{x+y:x\in P,\ y\in Q\}\subset\R^{d}$, where $p=q=d$.

\item Cayley sum: $\Cay(P,Q)=\conv\bigl((P\times\{0\})\cup(Q\times\{1\})\bigr)\subset\R^{d+1}$, where $p=q=d$.

\item Reflexive polarity: $P^*=\{y\in\R^p:\langle y,x\rangle\leq1\text{ for every }x\in P\}\subset\R^{p}$.

\item Integral dilations: $rP=\{rx:x\in P\}\subset\R^{p}$.
\end{enumerate}

Throughout this paper, whenever we state that an operation \emph{preserves} a certain property, we mean that if all constituent polytopes (input) possess the property, the resulting polytope (output) inherits it as well. For unary operations, this implication naturally applies to the single initial polytope. Furthermore, we do not require the constituent polytopes (input) to be of uniform dimension.

For Minkowski and Cayley operations, the constituent polytopes are embedded within a common ambient space $\R^d$ equipped with the standard lattice $\Z^d$, though they are not required to be full dimensional. 
In the context of free sums, we adopt the standard coordinates in $\R^{p+q}$, imposing the condition that both underlying polytopes contain the origin. Furthermore, a pyramid always has lattice height one.

The Cartesian product and lattice join of two lattice polytopes have been extensively studied previously; 
see Ferroni--Higashitani\cite{FH} and Liu--Tao--Xin \cite{LTX}. For completeness, we also mention these results in this paper.

Although a geometric operation on lattice polytopes may possess a straightforward definition, its induced action on the underlying discrete lattice points can be highly non-trivial.
This paper systematically compares eight operations on lattice polytopes with respect to three fundamental classes of properties:
magic positivity and Ehrhart positivity of Ehrhart coefficients, coefficient properties of the $h^*$-polynomial, and geometric properties such as integer decomposition and triangulation properties.
We summarize these results in the following \cref{tab:coefficients}.

\begin{table}[htbp]
\centering\small
\begin{tabular}{@{}lcccccccc@{}}
\toprule
Property & $\times$ & $*$ & $\oplus$ & $+$ & $\Cay$ & $\pyr$ & polar & dilations\\
\midrule
Ehrhart positivity & \textcolor{blue}{Y} & N & N & N & N & N & N & \textcolor{blue}{Y}\\
Magic positivity & \textcolor{blue}{Y} & N & N & N & N & N & N & \textcolor{blue}{Y}\\
$h^*$ symmetry & \textcolor{red}{C} & \textcolor{blue}{Y} & N & N & N & \textcolor{blue}{Y} & \textcolor{blue}{Y} & N \\
$h^*$ unimodality & N & N & N & N & N & \textcolor{blue}{Y} & N & \textcolor{blue}{Y}\\
$h^*$ log-concavity & \textcolor{blue}{Y} & \textcolor{blue}{Y} & N & N & N & \textcolor{blue}{Y} & N & \textcolor{blue}{Y}\\
$h^*\in\ULC(\dim P)$ & \textcolor{blue}{Y} & \textcolor{blue}{Y} & N & N & N & \textcolor{blue}{Y} & N & \textcolor{blue}{Y} \\
$h^*$ real-rootedness & \textcolor{blue}{Y} & \textcolor{blue}{Y} & N & N & N & \textcolor{blue}{Y} & N & \textcolor{blue}{Y}\\
$h^*$ $\gamma$-positivity & \textcolor{red}{C} & \textcolor{blue}{Y} & N & N & N & \textcolor{blue}{Y} & N & N \\
Spanning & \textcolor{blue}{Y} & \textcolor{blue}{Y} & \textcolor{blue}{Y} & N & N & \textcolor{blue}{Y} & N & \textcolor{blue}{Y}\\
Very ample & \textcolor{blue}{Y} & N & N & N & N & N & N & \textcolor{blue}{Y}\\
Integer decomposition property (IDP) & \textcolor{blue}{Y} & \textcolor{blue}{Y} & N & N & N & \textcolor{blue}{Y} & N & \textcolor{blue}{Y}\\
Unimodular triangulation & \textcolor{blue}{Y} & \textcolor{blue}{Y} & N & N & N & \textcolor{blue}{Y} & N & \textcolor{blue}{Y}\\
Regular unimodular triangulation & \textcolor{blue}{Y} & \textcolor{blue}{Y} & N & N & N & \textcolor{blue}{Y} & N & \textcolor{blue}{Y}\\
Quadratic triangulation & \textcolor{blue}{Y} & \textcolor{blue}{Y} & N & N & N & \textcolor{blue}{Y} & N & \textcolor{blue}{Y}\\
\bottomrule
\end{tabular}
\caption{A guide to the preservation results}\label{tab:coefficients}
\end{table}

In \cref{tab:coefficients}, the notation ``Y'' indicates that a property is preserved under the established conventions, whereas ``N'' signifies that it is not, supported by explicit counterexamples provided below. The symbol ``C'' denotes preservation contingent upon the equal-codegree hypothesis; we also supply a counterexample illustrating the failure of preservation when this condition is relaxed. 

For the free sum operation, the origin is permitted to reside on the boundary of the polytope. Stronger results derived via the free sum product formula are detailed separately. The application of polarity is restricted to reflexive polytopes.

For the properties in \cref{tab:coefficients} that are not preserved, we also study the sufficient or equivalent conditions for their preservation. We present the counting formulas for the above operations, namely, Ehrhart polynomials or $h^*$-polynomials.
In constructing some explicit counterexamples, we also used the software package LattE~\cite{DeLoera04} to compute and verify the corresponding Ehrhart polynomials and $h^*$-polynomials.

This paper is clearly organized. \cref{sec:product,sec:join-pyramid,Section-Pyr,sec:free,sec:minkowski,sec:cayley,sec:polarity,sec:dilation} are respectively devoted to the preservation properties under the eight polytope operations mentioned above. Finally, \cref{Section-Finily} contains the concluding remarks. \cref{Append-Section} contains the proof of \cref{thm:dilation-ulc-general}, which is relatively technical.

\section{Cartesian products}\label{sec:product}

Let $P\subset\R^p$ and $Q\subset\R^q$ be a lattice $p$-polytope and a lattice $q$-polytope, respectively.
The \emph{Cartesian product} of $P$ and $Q$ is
\[P\times Q=\{(x,y):x\in P,\ y\in Q\}\subset\R^{p+q}.
\]
Then $P\times Q$ is a lattice polytope of dimension $p+q$. Moreover, its Ehrhart polynomial is given by
\begin{align*}
L_{P\times Q}(n)=L_P(n)L_Q(n).
\end{align*}
For example, the reader is referred to \cite[Lemma 2.3]{Hibi-Higashitani-Tsuchiya-Yoshida} and \cite{FH}.
In particular, the normalized volume of a product is 
\[\Vol(P\times Q)=\binom{p+q}{p}\Vol(P)\Vol(Q).
\]

For power series $A(z)=\sum_{n\geq 0} a_nz^n$ and $B(z)=\sum_{n\geq 0} b_nz^n$, their \emph{Hadamard product} is $A(z)\mathbin{\circledast}B(z)=\sum_{n\geq 0} a_nb_nz^n$. Thus $\Ehr_{P\times Q}(z)=\Ehr_P(z)\mathbin{\circledast}\Ehr_Q(z)$.
For a nonzero polynomial $f$ define $W(f)$ by
\[\sum_{n\geq0}f(n)z^n=\frac{W(f)(z)}{(1-z)^{1+\deg f}}.
\]
In particular, $W(L_P)=h_P^*$. If $h_P^*(z)=\sum_{i=0}^p a_i z^i$, $h_Q^*(z)=\sum_{j=0}^q b_jz^j$, then
\[[z^k]h_{P\times Q}^*(z)=\sum_{i=0}^{p}\sum_{j=0}^{q}a_i b_j\binom{p-i+j}{k-i}\binom{q-j+i}{k-j}.
\]
This coefficient formula is recorded in \cite[Lemma~2.1]{LM}.

In what follows, we will frequently use the following lemma.

\begin{lemma}\label{lem:ulc-order}
If $s\leq D\leq D'$ and $h(z)\in\ULC(D)$, then $h(z)\in\ULC(D')$.
\end{lemma}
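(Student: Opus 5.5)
The plan is to reduce the claim to a single inequality between binomial ratios. Write $h(z)=\sum_{i=0}^s a_i z^i$ with $a_i\ge 0$ and $a_0a_s>0$. The no-internal-zeros condition does not depend on $D$, so it carries over from $\ULC(D)$ to $\ULC(D')$ unchanged. It remains to show that for every $1\le i<s$,
\[
\left(\frac{a_i}{\binom{D'}{i}}\right)^2\ \ge\ \frac{a_{i-1}}{\binom{D'}{i-1}}\cdot\frac{a_{i+1}}{\binom{D'}{i+1}}.
\]

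\textbf{Step 1: dispose of the degenerate indices.} If $a_{i-1}a_{i+1}=0$, the right-hand side is $0$ and there is nothing to prove. Otherwise, because there are no internal zeros, $a_{i-1},a_i,a_{i+1}$ are all positive. In that case the hypothesis $h\in\ULC(D)$ gives
\[
\frac{a_i^2}{a_{i-1}a_{i+1}}\ \ge\ \frac{\binom{D}{i}^2}{\binom{D}{i-1}\binom{D}{i+1}}.
\]
All binomials here are nonzero since $i+1\le s\le D\le D'$.

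\textbf{Step 2: monotonicity of the binomial ratio.} It suffices to show that the function
\[
\phi(N)=\frac{\binom{N}{i}^2}{\binom{N}{i-1}\binom{N}{i+1}}
\]
is nonincreasing in $N$ for $N\ge i+1$. Then $\phi(D)\ge\phi(D')$, and the required inequality for $D'$ follows from the one for $D$. Using $\binom{N}{i}/\binom{N}{i-1}=\frac{N-i+1}{i}$ and $\binom{N}{i+1}/\binom{N}{i}=\frac{N-i}{i+1}$, we get
\[
\phi(N)=\frac{i+1}{i}\cdot\frac{N-i+1}{N-i}=\frac{i+1}{i}\left(1+\frac{1}{N-i}\right),
\]
which is clearly decreasing in $N$.

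\textbf{Main obstacle.} The only real content is this binomial-ratio computation, which is elementary. The points needing care are bookkeeping: the zero cases (handled by the no-internal-zeros condition) and checking that $s\le D$ keeps every binomial coefficient nonzero.
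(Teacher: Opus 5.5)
Your proof is correct and follows essentially the same route as the paper: rewrite the $\ULC(D)$ condition as $a_i^2\geq a_{i-1}a_{i+1}\cdot\frac{i+1}{i}\bigl(1+\frac{1}{D-i}\bigr)$ and observe that this factor decreases as $D$ grows. Your explicit treatment of the zero cases and of the nonvanishing binomial coefficients is slightly more careful bookkeeping than the paper gives.
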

\begin{proof}
We may assume $s \geq 2$, as the log-concavity conditions are satisfied when $s \leq 1$. 
By the definition of $\ULC(D)$, we obtain the inequality
\[a_i^2 \geq a_{i-1}a_{i+1} \frac{\binom{D}{i}^2}{\binom{D}{i-1}\binom{D}{i+1}} 
= a_{i-1}a_{i+1} \left( \frac{i+1}{i} \right) \! \left( 1 + \frac{1}{D-i} \right)
\]
for each $1 \leq i < s$. Observe that the factor $1 + \frac{1}{D-i}$ is monotonically decreasing with respect to $D$. 
Thus, we have $h(z) \in \ULC(D')$.
\end{proof}

\begin{theorem}\label{thm:hadamard}
Let $P\subset\R^p$ and $Q\subset\R^q$ be a lattice $p$-polytope and a lattice $q$-polytope, respectively. Then the following statement holds.
\begin{enumerate}
\item If both $h_P^*(z)$ and $h_Q^*(z)$ are real-rooted, then $h_{P\times Q}^*(z)$ is real-rooted \emph{\cite[Theorem~0.2]{Wagner}}.
\item If both are log-concave with no internal zeros, then so is $h_{P\times Q}^*(z)$ \emph{\cite[Corollary~5.1]{LM}}.
\item If $h_P^*\in\ULC(p)$ and $h_Q^*\in\ULC(q)$, then $h_{P\times Q}^*\in\ULC(p+q)$ \emph{\cite[Theorem~1.2]{BFJ}}.
\item If both are symmetric and $r_P=r_Q$, then $h_{P\times Q}^*(z)$ is symmetric. If both are also $\gamma$-positive, then so is $h_{P\times Q}^*(z)$ \emph{\cite[Theorem~1.3]{BFJ}}.
\end{enumerate}
\end{theorem}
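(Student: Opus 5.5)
The theorem is a compilation of known results about the Hadamard product $\Ehr_{P\times Q}(z)=\Ehr_P(z)\circledast\Ehr_Q(z)$. The plan is to reduce each item to the cited statement about the operator $W$, namely $W(fg)$ for polynomials $f,g$, applied with $f=L_P$ and $g=L_Q$. The basic identity is $L_{P\times Q}=L_PL_Q$. This gives $h^*_{P\times Q}=W(L_PL_Q)$, and $\deg(L_PL_Q)=p+q$, so the denominator exponents match: $(1-z)^{p+q+1}$. Each item then becomes a statement of the form ``if $W(f)$ and $W(g)$ have property X, so does $W(fg)$'', which is exactly what the cited references prove.

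For items (i)--(iii), we check that the hypotheses of \cite{Wagner}, \cite{LM}, and \cite{BFJ} are met.
\begin{itemize}
\item Item (i): Wagner's theorem says that the $W$-transform of a product of polynomials with real-rooted $W$-transforms with nonnegative coefficients is again real-rooted. The $h^*$-polynomials have nonnegative coefficients by Stanley's theorem, so the hypothesis holds.
\item Item (ii): this is the log-concavity analogue, using the explicit coefficient formula
\[[z^k]h^*_{P\times Q}(z)=\sum_{i,j}a_ib_j\binom{p-i+j}{k-i}\binom{q-j+i}{k-j}\]
from \cite[Lemma~2.1]{LM}. Log-concavity with no internal zeros holds because the bilinear kernel is totally positive of order two in the appropriate sense. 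That is the content of \cite[Corollary~5.1]{LM}.
\item Item (iii): we only need to match parameters. The dimension parameters $p$ and $q$ in $\ULC(p)$ and $\ULC(q)$ are the degrees of $L_P$ and $L_Q$, which is the setting of \cite[Theorem~1.2]{BFJ}. If the cited version uses $\ULC$ with respect to degree, \cref{lem:ulc-order} lets us enlarge the parameter as needed.
\end{itemize}

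For item (iv), we give a direct argument for symmetry, and we expect this to be the only substantive step. By Hibi's theorem, symmetry of $h^*_P$ means that $P$ is Gorenstein of index $r_P$. Equivalently, in Ehrhart form, $L_P(-n-r)=(-1)^pL_P(n)$. This is reciprocity combined with the statement that the interior points of $(n+r)P$ are a translate of $nP$. If $r_P=r_Q=r$, multiplying the two identities gives
\[L_{P\times Q}(-n-r)=(-1)^{p+q}L_{P\times Q}(n).\]
Applying reciprocity to $P\times Q$, this says $h^*_{P\times Q}$ is symmetric with codegree $r$. Geometrically, $r(P\times Q)$ has the unique interior point $(v_P,v_Q)$, and the product of reflexive polytopes is reflexive.

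The main obstacle is checking that the functional equation $L(-n-r)=(-1)^dL(n)$ really characterizes symmetry with degree $s=d+1-r$. We would verify this through the rational-function identity
\[\Ehr(1/z)=(-1)^{d+1}z^{r}\Ehr(z).\]
Its expansion shows that $h^*$ has actual degree $d+1-r$ and is palindromic. For $\gamma$-positivity we cite \cite[Theorem~1.3]{BFJ}, whose hypotheses are the equal-codegree symmetric setting just established.
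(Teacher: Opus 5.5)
Your proposal is correct. For items (i)--(iii) and the $\gamma$-positivity half of (iv) it takes the same approach as the paper: both reduce to $L_{P\times Q}=L_PL_Q$, so that $h^*_{P\times Q}=W(L_PL_Q)$ with the matching exponent $p+q+1$, and then invoke \cite{Wagner}, \cite[Corollary~5.1]{LM} and \cite[Theorems~1.2--1.3]{BFJ}. Your TP$_2$ remark in (ii) is not itself an argument; all the content there is in the citation.

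The genuine difference is the symmetry half of (iv). The paper takes it from \cite{BFJ}. It notes that the defect $\deg L_P-\deg h_P^*=r_P-1$ makes equal codegree the right compatibility condition, and refers to \cite[Theorem~4.3]{LTX} for products of Gorenstein polytopes. You instead prove symmetry directly. One way is to multiply the functional equations $L_P(-n-r)=(-1)^pL_P(n)$ and $L_Q(-n-r)=(-1)^qL_Q(n)$. The other is geometric: $r(P\times Q)-(v_P,v_Q)=(rP-v_P)\times(rQ-v_Q)$ is reflexive, and Hibi's theorem then applies. Your route is self-contained and avoids \cite{BFJ} for symmetry; the paper's route obtains symmetry and $\gamma$-positivity from a single citation.

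Two small corrections.

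First, passing from the functional equation to $\Ehr(1/z)=(-1)^{d+1}z^r\Ehr(z)$ also requires $L_{P\times Q}(-j)=0$ for $1\le j\le r-1$. Otherwise the computation leaves the extra terms $(-1)^d\sum_{j=1}^{r-1}L(-j)z^{r-j}$. The vanishing does hold, because $\relint(j(P\times Q))=\relint(jP)\times\relint(jQ)$ has no lattice points for $j<r$, so $r_{P\times Q}=\max(r_P,r_Q)=r$. But it needs to be stated. The geometric version sidesteps this.

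Second, your fallback in (iii) runs the wrong way. \cref{lem:ulc-order} only raises the order, so it can turn $\ULC(s_P)$ hypotheses into $\ULC(p)$ ones, or weaken a conclusion. It cannot upgrade $h_P^*\in\ULC(p)$ to $h_P^*\in\ULC(s_P)$. The step is harmless only because \cite[Theorem~1.2]{BFJ} is already stated in dimension order, which is exactly the point of the paper's remark after the theorem.
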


The defect in \cite{BFJ} is $\deg L_P-\deg h_P^*=r_P-1$. Hence the compatibility in part~(iv) is equality of codegrees, rather than equality of dimensions. If both dimensions are positive and both inputs are Gorenstein, their product is Gorenstein if and only if their indices agree; see \cite[Theorem~4.3]{LTX}.

Part~(iii) should not be restated with the actual numerator degree as its output order. From $\ULC(s_P)$ and $\ULC(s_Q)$ one obtains the hypotheses of part~(iii) by \cref{lem:ulc-order}, but its conclusion is $\ULC(p+q)$, which can be weaker than $\ULC(s_{P\times Q})$. 

Ehrhart positivity is preserved because $L_{P\times Q}(n)=L_P(n)L_Q(n)$. Magic positivity is preserved as well: a product of basis terms is
\[n^i(n+1)^{p-i}\,n^j(n+1)^{q-j}=n^{i+j}(n+1)^{p+q-i-j},
\]
so the new magic coefficients are sums of products of nonnegative magic coefficients. This result is also recorded in \cite[Lemma~4.3]{BFJ}.

\begin{theorem}{\em (\cite[Proposition~2.2]{FH})}\label{thm:product-geometry}
Cartesian products preserve spanning, very ampleness, IDP, unimodular triangulations, regular unimodular triangulations, and quadratic triangulations.
\end{theorem}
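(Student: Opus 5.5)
The plan is to treat each property separately, using the identity $(P\times Q)\cap\Z^{p+q}=(P\cap\Z^p)\times(Q\cap\Z^q)$ and its dilated version $n(P\times Q)=nP\times nQ$. For \emph{spanning}, pick base points $p_0\in P\cap\Z^p$, $q_0\in Q\cap\Z^q$ and use $(p_0,q_0)$ as the base point of $P\times Q$. The differences $(u-p_0,0)$ with $u\in P\cap\Z^p$ generate $\Z^p\times 0$, and the differences $(0,w-q_0)$ generate $0\times\Z^q$, so together they generate $\Z^{p+q}$.

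For \emph{IDP}, take $(x,y)\in n(P\times Q)\cap\Z^{p+q}$. Then $x\in nP\cap\Z^p$ and $y\in nQ\cap\Z^q$, so we can write $x=\sum_{i=1}^n u_i$ and $y=\sum_{i=1}^n w_i$ with $u_i\in P$, $w_i\in Q$ lattice points. Pairing the summands index by index gives $(x,y)=\sum_i(u_i,w_i)$. For \emph{very ampleness}, the same pairing works for all $n\ge\max(n_P,n_Q)$, where $n_P,n_Q$ are the thresholds beyond which $P$ and $Q$ decompose.

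For the triangulation statements, take unimodular triangulations $\mathcal T_P$ and $\mathcal T_Q$. Each product $\sigma\times\tau$ of maximal simplices is a product of unimodular simplices, hence lattice-equivalent to $\Delta_p\times\Delta_q$. I would triangulate every such cell by the staircase triangulation: order the vertices of $\sigma$ as $a_0,\dots,a_p$ and those of $\tau$ as $b_0,\dots,b_q$, and take the simplices given by monotone lattice paths in the $(p+1)\times(q+1)$ grid. Each of these simplices is unimodular; this is the standard fact that all triangulations of $\Delta_p\times\Delta_q$ are unimodular.

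The \textbf{main obstacle} is making these local staircase triangulations agree on common faces and be globally regular and flag. My plan is:
\begin{enumerate}
\item Fix global total orders on the vertex sets of $\mathcal T_P$ and $\mathcal T_Q$, and use the induced orders in every cell. Faces of $\sigma\times\tau$ are $\sigma'\times\tau'$, and the staircase triangulation restricts to the staircase triangulation of $\sigma'\times\tau'$ with the induced orders. So the pieces glue to a triangulation of $P\times Q$ whose vertex set is the product of the vertex sets.
\item For \emph{regularity}, let $\omega_P$ and $\omega_Q$ be the height functions inducing $\mathcal T_P$ and $\mathcal T_Q$. Use the height $\omega(a,b)=\omega_P(a)+\omega_Q(b)+\varepsilon\,\phi(a,b)$, where $\phi$ induces the staircase triangulation (for instance $\phi(a_i,b_j)=-ij$ in the chosen orders, or a suitable convex function of the ranks). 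For small $\varepsilon>0$, the first two terms induce the product subdivision into cells $\sigma\times\tau$, and $\phi$ refines each cell consistently.
\item For \emph{flagness}, note that a set of vertices $(a,b)$ is a face exactly when its projections are faces of $\mathcal T_P$ and $\mathcal T_Q$ and the set is a chain in the product partial order (with each $\mathcal T_P$-face ordered by the global order). Both conditions are checked on pairs, provided $\mathcal T_P$ and $\mathcal T_Q$ are themselves flag.
\end{enumerate}

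Combining this with the spanning, IDP and very ampleness arguments above gives all the claims of \cref{thm:product-geometry}.
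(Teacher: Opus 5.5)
Your proposal is correct and follows essentially the same route as the paper, which defers to \cite[Proposition~2.2]{FH} and to the staircase product construction of \cite[Proposition~2.11]{HPPS}, triangulating each $\sigma\times\tau$ using one common vertex ordering. Your coordinatewise pairing arguments for spanning, IDP and very ampleness, and your height $\omega_P+\omega_Q+\varepsilon\phi$, fill in details the paper leaves to those references. With $\phi(a,b)=-\mathrm{rank}(a)\,\mathrm{rank}(b)$ taken in the global orders, $\phi$ is strictly Monge on every cell, so it induces the staircase triangulation there.
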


For \cref{thm:product-geometry}, the triangulation construction is the staircase product construction in \cite[Proposition~2.11]{HPPS}. The product construction uses a common ordering of vertices to triangulate each product of simplices. It is a statement using the usual integer points $\Z^{p+q}$ in $\R^{p+q}$.

We now consider the counterexamples to symmetry and unimodality under the Cartesian product.
\begin{example}
The intervals $[0,1]$ and $[0,2]$ have numerators $1$ and $1+z$, respectively. Both are symmetric and $\gamma$-positive, but
\[L_{[0,1]\times[0,2]}(n)=(n+1)(2n+1),\qquad h_{[0,1]\times[0,2]}^*(z)=1+3z.
\]
The product is not symmetric and therefore is not $\gamma$-positive. Their codegrees are two and one, as required by the preceding criterion.
\end{example}

Unimodality can also fail. It is essential here to use numerator
polynomials realized by lattice polytopes, rather than arbitrary unimodal
sequences. 
\begin{example}(Balletti; \cite[Proposition~4.3 and Theorem~4.4]{Balletti})
For $k\geq2$, $d=2k-1$, and positive integers $a,b$, set
\[ U_{a,b,k}=\conv(0,e_1,\ldots,e_{d-1},u,v),\]
where 
$$u=(\underbrace{1,\ldots,1}_{k-1},\underbrace{a,\ldots,a}_{k-1},a+1) \quad\text{and}\quad v=(\underbrace{0,\ldots,0}_{k-1},\underbrace{-b,\ldots,-b}_{k}).$$
Then
\[
 h_{U_{a,b,k}}^*(z)=1+b(z+\cdots+z^{k-1})+(a+b)z^k.
\]
Take $P=U_{13,1,13}$ and $Q=U_{8,1,14}$, of dimensions 25 and 27.
Both numerators are unimodal. For their 52-dimensional product $P\times Q$, the three coefficients at indices 22, 23, and 24 are
\[28264217617840835\ >\ 28180199302315925\ <\ 28214521222404290.
\]
This is a polytope counterexample to unimodality preservation.
\end{example}

\section{Lattice joins}\label{sec:join-pyramid}

The \emph{standard lattice join} of $P$ and $Q$ is
\[P*Q=\conv\bigl((P,0,0)\cup(0,Q,1)\bigr)\subset\R^{p+q+1}.
\]
Then $P* Q$ is a lattice polytope of dimension $p+q+1$. Moreover, its Ehrhart polynomial and Ehrhart series are given by
\begin{equation*}
 L_{P*Q}(n)=\sum_{k=0}^nL_P(k)L_Q(n-k),\qquad \Ehr_{P*Q}(z)=\Ehr_P(z)\Ehr_Q(z).
\end{equation*}
In particular, its $h^*$-polynomial is $h_{P*Q}^*(z)=h_P^*(z)h_Q^*(z)$.
The normalized volume of a join is 
\[\Vol(P*Q)=\Vol(P)\Vol(Q).
\]
The corresponding degree and codegree formulas are
\[s_{P*Q}=s_P+s_Q,\quad r_{P*Q}=r_P+r_Q.
\]
These are the classical join formulas; see \cite[Lemma~1.3]{HT} and \cite[Proposition~1.2]{LTX}. 
An arbitrary lattice realization of a combinatorial join need not satisfy these identities.

Some preservation properties of the $h^*$-polynomial can be obtained from the following result.
\begin{theorem}
Let $f(z)$ and $g(z)$ be two polynomials. Then the following result holds.
\begin{enumerate}
\item If both $f(z)$ and $g(z)$ are symmetric, then $f(z)g(z)$ is symmetric. 
\item If both $f(z)$ and $g(z)$ are symmetric unimodal, then $f(z)g(z)$ is symmetric unimodal \emph{\cite{StanleyLC}}.
\item If both $f(z)$ and $g(z)$ are nonnegative log-concave with no internal zeros, then $f(z)g(z)$ is log-concave \emph{\cite{StanleyLC}}.
\item If $f(z)\in\ULC(A)$ and $g(z)\in\ULC(B)$, then $f(z)g(z)\in\ULC(A+B)$ \emph{\cite{Liggett}}.
\item If both $f(z)$ and $g(z)$ are real-rooted, then $f(z)g(z)$ is real-rooted \emph{\cite{StanleyLC}}.
\item If both $f(z)$ and $g(z)$ are $\gamma$-positive, then $f(z)g(z)$ is $\gamma$-positive (by definition).
\end{enumerate}
\end{theorem}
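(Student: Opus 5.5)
The plan is to verify the six parts one at a time. Parts (ii), (iii), (iv), (v) are taken from the cited literature, and parts (i) and (vi) follow directly from the definitions. Throughout, write $f(z)=\sum_{i=0}^{s}a_iz^i$ and $g(z)=\sum_{j=0}^{t}b_jz^j$ with $a_0a_s>0$ and $b_0b_t>0$, so that $fg$ has actual degree $s+t$.

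For (i), I will use the functional form of symmetry. Since the leading coefficients are nonzero, $\deg(fg)=s+t$, and therefore
\[
z^{s+t}(fg)(z^{-1})=z^sf(z^{-1})\,z^tg(z^{-1})=f(z)g(z).
\]
For (vi), I expand both factors in the $\gamma$-bases:
\[
f(z)=\sum_j\gamma_jz^j(1+z)^{s-2j},\qquad g(z)=\sum_k\delta_kz^k(1+z)^{t-2k}.
\]
Multiplying gives
\[
fg=\sum_{m}\Bigl(\sum_{j+k=m}\gamma_j\delta_k\Bigr)z^m(1+z)^{s+t-2m}.
\]
By (i), $fg$ is symmetric of degree $s+t$. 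Uniqueness of the $\gamma$-expansion then identifies the $\gamma$-coefficients of $fg$ as these convolution sums, which are nonnegative.

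For (v), I argue via factorization. A real polynomial is real-rooted exactly when it factors over $\mathbb R$ into linear factors. The product of two such factorizations is again such a factorization, so $fg$ is real-rooted.

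For (ii), I follow Stanley's argument. A symmetric unimodal polynomial of degree $s$ (centered at $s/2$) can be written as a nonnegative combination of the "bars"
\[
z^i(1+z+\cdots+z^{s-2i}).
\]
The product of two bars with centers $s/2$ and $t/2$ is again symmetric unimodal with center $(s+t)/2$. A nonnegative sum of symmetric unimodal polynomials sharing a common center is symmetric unimodal, which gives the claim.

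For (iii), the coefficients of $fg$ form the convolution of two nonnegative log-concave sequences without internal zeros. The classical fact I will cite is that such sequences are exactly those whose Toeplitz matrix $(a_{i-j})$ is totally positive of order $2$. The Cauchy--Binet formula shows that TP$_2$ is preserved under products of Toeplitz matrices, so the convolution is log-concave. No internal zeros of $fg$ is immediate, since its support is the sumset of two intervals, which is an interval.

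For (iv), I will cite Liggett's theorem: if $a_i/\binom{A}{i}$ and $b_j/\binom{B}{j}$ are log-concave without internal zeros, then so is $c_k/\binom{A+B}{k}$, where $c=a*b$. This is exactly $fg\in\ULC(A+B)$. A direct check of the hypotheses is also needed: $A\ge s$ and $B\ge t$ imply $A+B\ge s+t$, so the parameter is admissible for $fg$.

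I expect (iv) to be the only genuinely nontrivial step. A direct proof would need Liggett's inductive argument, or the Hermite–Biehler/Lorentzian-polynomial viewpoint. There, $\ULC(A)$ for $f$ corresponds to the homogenization $y^Af(x/y)$ being Lorentzian after normalization, and the product of Lorentzian polynomials is Lorentzian. Since the statement attributes (iv) to \cite{Liggett}, I will simply invoke it. One point of care in (ii) is that both factors are symmetric with respect to their own actual degrees, so their centers add correctly.
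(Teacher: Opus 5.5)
Your proposal is correct and matches the paper, which gives no argument beyond treating (i) and (vi) as immediate from the definitions and citing Stanley and Liggett for (ii)--(v); your sketches (the bar decomposition, TP$_2$ Toeplitz matrices with Cauchy--Binet, factorization into real linear factors, and convolution of $\gamma$-coefficients via uniqueness of the expansion) are the standard arguments behind those citations, and the Lorentzian route you mention for (iv) is the $\alpha=1$ case of the paper's \cref{lem:fractional-ulc-convolution}. One hypothesis should be stated explicitly: (ii) needs the paper's standing convention $a_i\ge 0$, not only $a_0a_s>0$ as in your setup, because your bar weights $a_i-a_{i-1}$ (starting with $a_0$) must be nonnegative; for example, $f=g=-1-z^2$ is symmetric and unimodal with $a_0a_s>0$, but $f^2=1+2z^2+z^4$ is not unimodal.
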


These facts give the following statement.

\begin{corollary}\label{cor:join-shape}
Standard lattice joins preserve symmetry, log-concavity with no internal zeros, real-rootedness, and $\gamma$-positivity of $h^*$-polynomial. They preserve ultra log-concavity both in dimension order and in actual-degree order. They also
preserve the simultaneous conditions of symmetry and unimodality.
\end{corollary}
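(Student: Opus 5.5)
The corollary follows by combining the identity $h_{P*Q}^*(z)=h_P^*(z)h_Q^*(z)$ with the preceding theorem on products of polynomials. The proof is mostly bookkeeping: the degree and dimension parameters in the join must match what that theorem needs. The relevant parameters are $\dim(P*Q)=p+q+1$ and $s_{P*Q}=s_P+s_Q$. Both $h^*$-polynomials have nonnegative coefficients and constant term $1$, and $s_{P*Q}=s_P+s_Q$ because the leading coefficients multiply to a positive number. So symmetry of $h_{P*Q}^*$ is meant with respect to its actual degree $s_P+s_Q$, which is exactly the degree of the product.

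I would take the properties one at a time.

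Symmetry follows from part (i): $z^{s_P+s_Q}f(z^{-1})g(z^{-1})=f(z)g(z)$. In join language, Gorenstein joins of Gorenstein polytopes are again Gorenstein, with $r_{P*Q}=r_P+r_Q$.

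Log-concavity with no internal zeros follows from part (iii). Since the coefficients are nonnegative, the product again has no internal zeros: its support is the sumset of two intervals, which is an interval.

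Real-rootedness is part (v), and $\gamma$-positivity is part (vi). For (vi) I would write out the one-line reason. If $f=\sum\gamma_j z^j(1+z)^{s-2j}$ and $g=\sum\delta_k z^k(1+z)^{t-2k}$, then
\[
fg=\sum_{j,k}\gamma_j\delta_k\, z^{j+k}(1+z)^{(s+t)-2(j+k)},
\]
which is a nonnegative $\gamma$-expansion of degree $s+t$. By uniqueness of the expansion, this is the $\gamma$-vector of $fg$.

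Symmetric unimodality follows from part (ii).

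The only step needing care is ultra log-concavity, because of the order parameter. For the actual-degree order, take $f\in\ULC(s_P)$ and $g\in\ULC(s_Q)$. Part (iv) gives $fg\in\ULC(s_P+s_Q)=\ULC(s_{P*Q})$ directly. For the dimension order, take $f\in\ULC(p)$ and $g\in\ULC(q)$. Part (iv) gives only $fg\in\ULC(p+q)$, while the target is $\ULC(p+q+1)$. This gap is closed by \cref{lem:ulc-order}, since $s_{P*Q}\le p+q\le p+q+1$. I expect this parameter shift to be the one point to state explicitly. Everything else is a direct citation of the preceding theorem.
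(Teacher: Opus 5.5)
Your proposal is correct and follows the paper's argument. The paper likewise reads each property off $h_{P*Q}^*=h_P^*h_Q^*$ and the preceding product theorem, using $s_{P*Q}=s_P+s_Q$ for the actual-degree order and \cref{lem:ulc-order} to pass from $\ULC(p+q)$ to $\ULC(p+q+1)$ for the dimension order; your remarks on interval supports and the $\gamma$-expansion of the product just spell out what the paper leaves to the cited facts.
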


For dimension order, first obtain $\ULC(p+q)$ and then use \cref{lem:ulc-order} to pass to the join dimension $p+q+1$.
For actual-degree order, use $s_{P*Q}=s_P+s_Q$.
There is no equal-codegree restriction for joins.

The following example shows that if $h^*_P(z)$ and $g^*_Q(z)$ are merely unimodal, then $h_{P*Q}^*(z)$ need not be unimodal.

\begin{example}(Liu-Tao-Xin; \cite[Theorem~4.4]{LTX})\label{ex:join-unimodal}
Ferroni--Higashitani \cite[Theorem~5.5]{FH} describe a five-dimensional polytope $P\subset\R^7$ whose vertices are the columns of
\[\begin{pmatrix}
1&1&1&1&0&0&0&0&0&0&0&0\\
1&1&0&0&1&1&0&0&0&0&0&0\\
0&0&0&0&1&1&1&1&0&0&0&0\\
0&0&0&0&0&0&1&1&1&1&0&0\\
0&0&0&0&0&0&0&0&1&1&1&1\\
0&0&1&1&0&0&0&0&0&0&1&1\\
111&112&0&1&0&1&0&1&0&1&0&1
\end{pmatrix}.\]
The $h^*$-polynomial of $P$ is given by $h_P^*(z)=1+6z+6z^2+113z^3$, which is unimodal.
The we have 
\[h_{P*P}^*(z)=1+12z+48z^2+298z^3+1392z^4+1356z^5+12769z^6.
\]
The last three displayed nonconstant coefficients contain the trough $1392>1356<12769$. Thus ordinary multiplication of arbitrary unimodal $h^*$-polynomials need not preserve unimodality. The dimension of $P*P$ is 11, although its numerator has degree $6$.
\end{example}

We now consider the Ehrhart positivity and magic positivity of $P*Q$.

\begin{example}(Liu-Tao-Xin; \cite[Theorem~2.3, Example~2.4]{LTX})
Liu--Tao--Xin prove that, for every Ehrhart-positive $d$-polytope $P$ with $d\geq4$, the join $rP*[0,1]$ has a negative Ehrhart coefficient for every sufficiently large integer $r$. 
Let $P= [0,1]^4$ denote the $4$-dimensional unit cube. If $r=100$, a negative linear coefficient in $L_{rP*[0,1]}(n)$ is given by $-19139191/6$.
\end{example}

\begin{example}
The interval $I=[0,1]$ is magic positive. However, the join $I*I=\Delta_3=\conv(0,e_1,e_2,e_3)$ has
magic coefficients $(1,-7/6,1/3,0)$.
\end{example}

We now turn to the geometric properties of $P*Q$.

\begin{theorem}{\em (Liu-Tao-Xin; \cite[Sections~5--6]{LTX})}\label{thm:join-geometry}
Standard lattice joins preserve spanning, IDP, unimodular triangulations, regular unimodular triangulations, and quadratic triangulations. However, the join of two very ample lattice polytopes is not necessarily very ample.
\end{theorem}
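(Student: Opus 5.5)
Write $J=P*Q=\conv\bigl((P,0,0)\cup(0,Q,1)\bigr)\subset\R^{p+q+1}$. The whole argument rests on one description of its dilates. We identify a point $(x,y,n_0)$ with last coordinate $t=n_0\in\{0,\dots,n\}$ and aim to show
\[
nJ\cap\Z^{p+q+1}=\bigsqcup_{k=0}^{n}\bigl((n-k)P\cap\Z^p\bigr)\times\bigl(kQ\cap\Z^q\bigr)\times\{k\}.
\]
The inclusion ``$\supseteq$'' is immediate. For ``$\subseteq$'', write a point of $nJ$ as $\lambda(a,0,0)+\mu(0,b,1)$ with $\lambda+\mu=n$, $\lambda,\mu\ge0$, $a\in P$, $b\in Q$. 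Then $\mu$ is the last coordinate, so it is an integer $k$. Hence $x=(n-k)a\in(n-k)P$ and $y=kb\in kQ$.

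This slice decomposition yields every positive statement.

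\emph{Spanning.} The lattice points of $J$ are $(u,0,0)$ with $u\in P\cap\Z^p$ and $(0,v,1)$ with $v\in Q\cap\Z^q$. Take base point $(p_0,0,0)$. Differences of points in the first family generate $\Z^p\times0\times0$. The differences $(0,v,1)-(p_0,0,0)$ then give $(0,v-v_0,0)$, which generate $0\times\Z^q\times0$. Finally the single vector $(-p_0,v_0,1)$ supplies the last coordinate. Together these generate $\Z^{p+q+1}$.

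\emph{IDP.} A lattice point of $nJ$ in slice $k$ has the form $(x,y,k)$. If $n-k\ge1$, write $x=u_1+\dots+u_{n-k}$ using IDP of $P$; if $k\ge1$, write $y=v_1+\dots+v_k$ using IDP of $Q$. The degenerate slices $k=0$ and $k=n$ are trivial, since $0P$ and $0Q$ are $\{0\}$. Then
\[
(x,y,k)=\sum_i (u_i,0,0)+\sum_j(0,v_j,1),
\]
a sum of exactly $n$ lattice points of $J$.

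\emph{Triangulations.} Given triangulations $\mathcal T_P$ and $\mathcal T_Q$, take the join complex $\{\sigma*\tau\}$, where the simplices are $\conv((\sigma,0,0)\cup(0,\tau,1))$. By the slice description, these simplices cover $J$ and meet along common faces. For a join of simplices we have
\[
\Vol(\sigma*\tau)=\Vol(\sigma)\Vol(\tau),
\]
which is the join volume formula already recalled, applied to simplices. So the join complex is unimodular whenever both inputs are.

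Regularity: given height functions $\omega_P$ and $\omega_Q$, use $\omega(u,0,0)=\omega_P(u)$ and $\omega(0,v,1)=\omega_Q(v)$. We must check that the lower hull induces exactly the join cells. A lower face over $J$ restricts to lower faces over the two disjoint parallel facets $P\times0\times0$ and $0\times Q\times1$. Conversely, the affine function on $\R^{p+q+1}$ built from an affine lower-support function on each side supports the join cell. This works because the two pieces live in complementary coordinate blocks with an extra coordinate separating them, so their affine functions extend independently.

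Flagness: the minimal non-faces of the join complex $\Delta_P*\Delta_Q$ are exactly the minimal non-faces of $\Delta_P$ or of $\Delta_Q$. Hence if both complexes are flag, the join complex is flag, and quadratic triangulations are preserved.

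\emph{Very ampleness fails.} I would take $P=Q$ to be a very ample non-IDP polytope with a large gap. One option is a Bruns–Gubeladze type example, or the example cited in \cite{LTX}. The mechanism is as follows: in slice $k$, a point needs $x\in(n-k)P$ and $y\in kQ$ to decompose simultaneously. For every $n$, choose $k$ so that $n-k$ lands exactly on a degree where $P$ has a hole, meaning a lattice point of $(n-k)P$ that is not a sum of $n-k$ lattice points of $P$. Then $J$ fails IDP at level $n$. To make this work for all large $n$, it suffices that the hole degree stays fixed while $k=n-m$ grows. That is automatic: a lattice point of $mP$ that is not a sum gives, for every $n\ge m$, the point $(x,0,n-m)$-type element in slice $k=n-m$. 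Its non-decomposability follows because the summands with last coordinate $0$ are forced to number exactly $m$.

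Writing the counterexample down explicitly and verifying this final non-decomposability is the main obstacle; regularity is routine.
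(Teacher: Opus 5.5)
Your proposal is correct and follows essentially the paper's route: the positive parts are the standard slice and join-complex arguments behind the cited Liu--Tao--Xin results, and your layer count for very ampleness (exactly $m$ summands come from the $P$-layer, which would decompose the hole $x\in mP$) is the argument of \cref{prop:join-eventual-idp}. Two small fixes are needed: the lifted point should be $(x,(n-m)q_0,n-m)$ for a lattice point $q_0\in Q$, since $(x,0,n-m)\in nJ$ requires $0\in Q$, and you should name an explicit very ample non-IDP polytope, such as $V$ in \cref{Example-Veryample}.
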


In what follows, we present a counterexample to very ample property.

\begin{example}\label{Example-Veryample}
Let
\[ V=\conv\{(0,0,0),(0,0,1),(1,0,0),(1,0,1),(0,1,0),(0,1,1),(1,1,16),(1,1,17)\}.
\]
This polytope appears in \cite[SEction 9]{Balletti2021}.
This is a very ample non-IDP polytope; see \cite[Theorem~3.8]{FH}.
One can directly see the IDP failure. The point $(1,1,3)$ lies in $2V$: the segment in $2V$ from $(1,1,0)$ to $(1,1,16)$ contains it. The first endpoint is $(1,0,0)+(0,1,0)$ and the second is $(0,0,0)+(1,1,16)$, so both belong to $2V$.
However, $(1,1,3)$ cannot be expressed as a sum of two lattice points in $V$. Therefore, $V$ is not IDP.
According to \cite[Theorem 5.3]{LTX}, $V*V$ is not very ample.
\end{example}

\section{Pyramids}\label{Section-Pyr}

The \emph{lattice pyramid} of $P$ is
\[\pyr(P)=\conv\bigl((P,0),(0,1)\bigr)\subset\R^{p+1}.
\]
It is the standard join of $P$ and a point. Its dimension is $p+1$, and
\begin{equation}\label{eq:pyramid}
 L_{\pyr(P)}(n)=\sum_{k=0}^{n}L_P(k),\qquad h_{\pyr(P)}^*(z)=h_P^*(z).
\end{equation}
The corresponding degree and codegree formulas are
\[s_{\pyr(P)}=s_P,\quad r_{\pyr(P)}=r_P+1.
\]

The following low-dimensional example shows that $\pyr(P)$ preserves neither Ehrhart positivity nor magic positivity.

\begin{example}\label{ex:pyramid-negative}
Let $P=[0,7]^4$. Then the Ehrhart polynomial is given by $L_P(n)=(7n+1)^4$ 
This is Ehrhart positive and magic positive, since $7n+1=(n+1)+6n$. 
Its pyramid satisfies
\[L_{\pyr(P)}(n)=\frac{2401}{5}n^5+\frac{3087}{2}n^4+\frac{4753}{3}n^3+504n^2-\frac{481}{30}n+1.
\]
In particular the linear coefficient is $-2401/30+294/6+28/2+1=-481/30$.
Therefore, $\pyr(P)$ is neither Ehrhart positive nor magic positive.
So this is also a counterexample for the binary join.
\end{example}

The fact that the pyramid operation on lattice polytopes does not preserve Ehrhart positivity is also discussed in \cite[Section~4]{FH}.
Magic positivity already fails in dimension two. The interval $I=[0,1]$ is magic positive, but
\[L_{\pyr(I)}(n)=\binom{n+2}{2}=(n+1)^2-\frac12 n(n+1).
\]

According to \eqref{eq:pyramid}, pyramids preserve every listed numerator property, i.e., the coefficient property of $h^*$-polynomial.
Since $\pyr(P)$ is the standard join of $P$ and a point, they preserve spanning, IDP, and all three triangulation properties by \cref{thm:join-geometry}.

In particular, it follows from \cref{Example-Veryample} that $\pyr(V)$ is not very ample.

\section{Free sums}\label{sec:free}

Let $P\subset\R^p$ and $Q\subset\R^q$ be lattice polytopes of dimensions $p$ and $q$, respectively, and suppose that both contain
the origin. Their \emph{free sum}, also called their \emph{direct sum}, is
\[
 P\oplus Q=\conv\bigl((P,0)\cup(0,Q)\bigr)\subset\R^{p+q}.
\]
Then $P\oplus Q$ is a lattice polytope of dimension $p+q$ containing the origin.
Throughout this section the origin may lie on the boundary.
When an interior origin or reflexivity is needed, it is included in the statement.

\subsection{Preliminaries}

For $0\in P$, define the \emph{cone} and the \emph{gauge} of $P$ by
\[\operatorname{cone}(P)=\bigcup_{\lambda\geq0}\lambda P,\qquad \rho_P(x)=\min\{\lambda\geq0:x\in\lambda P\} \quad(x\in\operatorname{cone}(P)).
\]
Thus $\rho_P(x)$ is the first dilation at which the point $x$ enters
$P$. In particular, $\rho_P(0)=0$. If $x\ne0$, boundedness of $P$
implies $\rho_P(x)>0$, and closedness of $P$ ensures that the minimum is attained. Since $0\in P$, the sets $\lambda P$ increase with $\lambda$, and consequently
\begin{equation}\label{eq:gauge-membership}
 x\in\lambda P\quad\Longleftrightarrow\quad
 \rho_P(x)\leq\lambda
 \qquad(x\in\operatorname{cone}(P),\ \lambda\geq0).
\end{equation}
Indeed, if $x\in\mu P$ and $0\leq\mu\leq\lambda$, then $x/\lambda=(\mu/\lambda)(x/\mu)+(1-\mu/\lambda)0\in P$;
the cases $x=0$ or $\lambda=0$ are immediate.

A facet is a face of codimension one. Every facet $H$ of a full-dimensional lattice polytope has a supporting inequality
\[
 \langle u_H,x\rangle\leq b_H,
\]
where $u_H\in\Z^p$ is an outward primitive normal vector and $b_H\in\Z$. Here \emph{primitive} means that the greatest common
divisor of the coordinates of $u_H$ is one. The normal is determined uniquely after this choice of orientation and normalization.
Since $0\in P$, we have $b_H\geq0$. The facet contains the origin exactly when $b_H=0$. For the other facets, the positive integer
$b_H$ is their \emph{lattice distance} from the origin: the parallel lattice hyperplanes are the integer levels of $\langle u_H,x\rangle$. This distance is not Euclidean distance.

For a point in $\operatorname{cone}(P)$, every inequality with $b_H=0$ already holds. The remaining inequalities for $x\in\lambda P$
are $\langle u_H,x\rangle\leq\lambda b_H$. It follows that
\begin{equation}\label{eq:gauge-facets}
 \rho_P(x)= \max\left(\{0\}\cup
 \left\{\frac{\langle u_H,x\rangle}{b_H}:b_H>0\right\}\right).
\end{equation}
In particular, the gauge has rational values at lattice points.
Define
\[
 \ell_P=\operatorname{lcm}\{b_H:b_H>0\},
\]
where $\operatorname{lcm}$ denotes the least common multiple.
Then $\rho_P(x)\in\ell_P^{-1}\Z_{\geq0}$ for every
$x\in\operatorname{cone}(P)\cap\Z^p$. For the zero-dimensional
factor $\{0\}$, set $\ell_{\{0\}}=1$.
The parameter $\ell_P$ depends on the specified origin and is
different from the codegree $r_P$.

For example, if $P=[-a,b]$ with positive integers $a,b$, then
\[
 \rho_P(x)=
 \begin{cases}x/b,&x\geq0,\\-x/a,&x\leq0,\end{cases}
 \qquad \ell_P=\operatorname{lcm}(a,b).
\]
If $0$ lies in the interior of a positive-dimensional $P$, then
$\ell_P=1$ is equivalent to reflexivity. Indeed, the vertices of
the polar are $u_H/b_H$; see \cite{BR}. Since $u_H$ is primitive
and $b_H$ is a positive integer, this vector is integral exactly
when $b_H=1$. If $0$ lies on the boundary,
$\ell_P=1$ means only that the facets not containing $0$ have lattice
distance one. A height-one pyramid with its apex at $0$ satisfies
this latter condition without having an interior origin.

\subsection{The weighted $h^*$-polynomials}

The series
\[
 \sum_{x\in\operatorname{cone}(P)\cap\Z^p}z^{\rho_P(x)}
\]
records the first radial level of each lattice point. Its coefficients
are finite: points of gauge at most $a$ belong to the bounded set
$aP$. Exponents are multiples of $1/\ell_P$, so the series is an
ordinary formal power series in $z^{1/\ell_P}$.

Stapledon's \emph{weighted $h^*$-polynomial} \cite[Proposition 2.6]{StapledonAM} is specified by
\[\sum_{x\in\operatorname{cone}(P)\cap\Z^p}z^{\rho_P(x)}=\frac{\widetilde h_P(z)}{(1-z)^p}.
\]
In fact, $\widetilde h_P(z)$ is a polynomial in $z^{1/\ell_P}$ with nonnegative integer coefficients, see \cite[Equation~(4) and Remark~2]{Stapledon}. Equivalently, one may count points on the union of facets of $\lambda P$ not containing the origin: each nonzero cone point lies on this union at its unique first level $\lambda=\rho_P(x)$. The origin supplies the level-zero term.

For a nonnegative rational number $a$, let $\lceil a\rceil$ denote
the least integer greater than or equal to $a$. Define the linear map
$\Psi$ by
\[
 \Psi\left(\sum_a c_a z^a\right)=\sum_a c_a z^{\lceil a\rceil}.
\]
Thus $\Psi$ rounds the exponents up and adds coefficients when
several exponents round to the same integer. It does not preserve
multiplication: $\Psi(z^{1/2}z^{1/2})=z$, whereas
$\Psi(z^{1/2})\Psi(z^{1/2})=z^2$.

\begin{theorem}\label{thm:free-formula}
Let $P\subset\R^p$ and $Q\subset\R^q$ be full-dimensional lattice polytopes containing the origin. Then
\[\widetilde h_{P\oplus Q}(z)=\widetilde h_P(z)\widetilde h_Q(z), \qquad h_P^*(z)=\Psi(\widetilde h_P(z)),
 \qquad h_{P\oplus Q}^*(z)=\Psi(\widetilde h_P(z)\widetilde h_Q(z)).
\]
Moreover, we have 
\begin{equation}\label{eq:free-product-criterion}
 h_{P\oplus Q}^*(z)=h_P^*(z)h_Q^*(z)\quad\Longleftrightarrow\quad \ell_P=1\quad \text{or}\quad \ell_Q=1.
\end{equation}
\end{theorem}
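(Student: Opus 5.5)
The plan is to prove the three identities by reducing everything to the gauge, and then to obtain the criterion \eqref{eq:free-product-criterion} by comparing exponents termwise. Throughout, write $F_P(z)=\sum_{x\in\operatorname{cone}(P)\cap\Z^p}z^{\rho_P(x)}$.

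\textbf{The product formula for $\widetilde h$.} First I will show that $\operatorname{cone}(P\oplus Q)=\operatorname{cone}(P)\times\operatorname{cone}(Q)$ and that
\[\rho_{P\oplus Q}(x,y)=\rho_P(x)+\rho_Q(y).\]
For the inequality ``$\leq$'', write $(x,y)=\rho_P(x)\,(x/\rho_P(x),0)+\rho_Q(y)\,(0,y/\rho_Q(y))$. This expresses $(x,y)$ as a point of $(\rho_P(x)+\rho_Q(y))(P\oplus Q)$ by convexity; the degenerate cases $x=0$ or $y=0$ are handled trivially.

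For ``$\geq$'', take $(x,y)\in\lambda(P\oplus Q)$. Then $(x,y)=\lambda\bigl(t(x',0)+(1-t)(0,y')\bigr)$ with $x'\in P$ and $y'\in Q$. This gives $x\in\lambda tP$ and $y\in\lambda(1-t)Q$, so \eqref{eq:gauge-membership} yields $\rho_P(x)+\rho_Q(y)\leq\lambda$.

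Since the lattice $\Z^{p+q}$ is the product lattice, $F_{P\oplus Q}=F_PF_Q$. Multiplying by $(1-z)^{p+q}$ then gives $\widetilde h_{P\oplus Q}=\widetilde h_P\widetilde h_Q$.

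\textbf{The rounding formula $h_P^*=\Psi(\widetilde h_P)$.} By \eqref{eq:gauge-membership}, $L_P(n)$ counts the cone points with $\lceil\rho_P(x)\rceil\leq n$. Therefore
\[\Ehr_P(z)=\sum_x\frac{z^{\lceil\rho_P(x)\rceil}}{1-z}=\frac{\Psi(F_P)}{1-z}.\]
The key observation is that $\Psi$ is linear over multiplication by power series with integer exponents, since $\Psi(z^{a+m})=z^{m}\Psi(z^a)$ for $m\in\Z$. Hence
\[\Psi(F_P)=\Psi\bigl(\widetilde h_P\,(1-z)^{-p}\bigr)=\Psi(\widetilde h_P)(1-z)^{-p},\]
so $h_P^*=\Psi(\widetilde h_P)$. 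This uses $\dim P=p$ and the fact that the expansion of $(1-z)^{-p}$ is a formal power series. Applying the rounding formula to $P\oplus Q$ together with the product formula gives the third identity.

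\textbf{The criterion, easy direction.} If $\ell_P=1$, then $\widetilde h_P$ has integer exponents. The same $\Z$-linearity of $\Psi$ gives
\[\Psi(\widetilde h_P\widetilde h_Q)=\widetilde h_P\Psi(\widetilde h_Q)=h_P^*h_Q^*.\]

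\textbf{The criterion, converse.} Suppose $\ell_P,\ell_Q>1$. Expand both sides monomial by monomial: for $\widetilde h_P=\sum c_az^a$ and $\widetilde h_Q=\sum d_bz^b$, the two sides are $\sum c_ad_b z^{\lceil a+b\rceil}$ and $\sum c_ad_bz^{\lceil a\rceil+\lceil b\rceil}$. All coefficients are nonnegative and $\lceil a+b\rceil\leq\lceil a\rceil+\lceil b\rceil$. So the two polynomials have equal value at $z=1$, and the derivative at $1$ of the left side is at most that of the right side. Equality of derivatives forces $\lceil a+b\rceil=\lceil a\rceil+\lceil b\rceil$ for every pair with $c_ad_b>0$. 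It therefore suffices to find exponents $a$ in $\widetilde h_P$ and $b$ in $\widetilde h_Q$ that are non-integral with fractional parts in $(0,1/2]$. Then $\lceil a+b\rceil<\lceil a\rceil+\lceil b\rceil$ and the derivatives differ.

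\textbf{Producing the fractional exponents.} Grouping $F_P$ by the class of the exponent modulo $\Z$, the class-$c$ part of $F_P$ equals the class-$c$ part of $\widetilde h_P$ divided by $(1-z)^p$. So it is enough to find a lattice point $x$ with $\{\rho_P(x)\}\in(0,1/2]$.

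\textbf{Main obstacle.} This existence step is where I expect the real work. Pick a facet $H$ with $b_H\geq2$ and consider the cone over $H$ from the origin. On that cone, $\rho_P(x)=\langle u_H,x\rangle/b_H$. The cone is full-dimensional and $u_H$ is primitive, so the level sets $\{\langle u_H,x\rangle=k\}$ meet $\Z^p$ in translates of a fixed rank-$(p-1)$ lattice. The slice of the cone at level $k$ is the dilate $(k/b_H)H$, which contains arbitrarily large balls as $k\to\infty$. Hence it contains lattice points for all large $k$.

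Choosing a large $k\equiv1\pmod{b_H}$ gives $\{\rho_P(x)\}=1/b_H\leq1/2$. Doing the same for $Q$ finishes the proof.
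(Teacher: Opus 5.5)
Your proof is correct, and it takes a genuinely different route from the paper. The paper does not prove this statement at all. It quotes the three identities from Stapledon and the criterion \eqref{eq:free-product-criterion} from Beck--Jayawant--McAllister and Stapledon's Corollary~9. The gauge additivity $\rho_{P\oplus Q}(x,y)=\rho_P(x)+\rho_Q(y)$ appears only afterwards, derived from the slice description of $n(P*Q)$ under the projection onto $P\oplus Q$.

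You instead do the following:
\begin{itemize}
\item You prove additivity directly from the description of $P\oplus Q$ as a union of segments.
\item You obtain $h_P^*=\Psi(\widetilde h_P)$ from the single observation that $\Psi$ commutes with multiplication by series with integer exponents.
\item You reduce the hard direction of the criterion to a first-moment comparison. Since $\lceil a+b\rceil\leq\lceil a\rceil+\lceil b\rceil$ termwise and both sides have the same value at $z=1$, it suffices to exhibit one pair of positive-coefficient exponents with fractional parts in $(0,1/2]$. Your covering argument on the cone over a facet with $b_H\geq2$ supplies this pair.
\end{itemize}
This makes the result self-contained and elementary. It also shows concretely why a facet at lattice distance at least two in \emph{both} factors breaks the product formula.

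The one input you use without comment is that $\widetilde h_P$ and $\widetilde h_Q$ have nonnegative coefficients and finitely many terms. Nonnegativity is genuinely needed: without it, the strict first-moment inequality could be cancelled by negative terms. It is exactly the Stapledon fact the paper records just before the theorem, so you may cite it. Alternatively, it follows from a half-open decomposition of the simplicial cones over a triangulation of the facets not containing $0$. On each such cone $\rho_P$ is linear and equals $1$ at the generators. Given nonnegativity, finiteness is automatic: $\Psi(\widetilde h_P)=h_P^*$ is a polynomial, and $\Psi$ only adds nonnegative coefficients over unit intervals. Hence evaluating derivatives at $z=1$ is legitimate.
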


The weighted identities are given by Stapledon \cite[Lemma~4 and Equations~(6), (8)]{Stapledon}.
The equivalence \eqref{eq:free-product-criterion} is given by Beck, Jayawant, and McAllister\cite[Theorem~1.3]{BJM}, also \cite[Corollary~9]{Stapledon}.
In particular, if $P$ is reflexive and $0\in Q$, then the product formula holds.
Braun's earlier theorem assumes that one factor is reflexive and that the other contains $0$ in its interior \cite[Theorem~1]{Braun}.

Under the product criterion (Equation \eqref{eq:free-product-criterion}), we have 
\[\Ehr_{P\oplus Q}(z)=(1-z)\Ehr_P(z)\Ehr_Q(z).
\]
The factor $1-z$ accounts for the one-dimensional difference between the free sum and the join.
The weighted formula also gives $\Vol(P\oplus Q)=\Vol(P)\Vol(Q)$: rounding exponents does not change the sum of the coefficients.

The dimensions and ordinary Ehrhart polynomials of the factors do not determine the numerator of a general free sum.

\begin{example}{(\cite[Example~1]{Stapledon})}
The intervals $I=[-2,2]$ and $I'=[-1,3]$ are lattice polytopes, and both have $h^*(z)=1+3z$. Their gauges give
\[\widetilde h_I(z)=1+2z^{1/2}+z,\qquad \widetilde h_{I'}(z)=1+z^{1/3}+z^{2/3}+z;
\]
According to \cref{thm:free-formula}, we have 
\[h_{I\oplus I}^*(z)=1+10z+5z^2,\qquad h_{I'\oplus I'}^*(z)=1+9z+6z^2.
\]
\end{example}

\subsection{The difference between join and free sum}

The join formula recalled in \cref{sec:join-pyramid} is
\begin{equation*}
 \Ehr_{P*Q}(z)=\Ehr_P(z)\Ehr_Q(z),\qquad h_{P*Q}^*(z)=h_P^*(z)h_Q^*(z).
\end{equation*}
Unlike the free sum formula, it has no condition on facet distances.
The following statement identifies precisely when its numerator consequences also apply to a given free sum.

\begin{corollary}\label{cor:free-join}
Let $P\subset\R^p$ and $Q\subset\R^q$ be full-dimensional lattice
polytopes containing $0$. Then
\[ h_{P\oplus Q}^*(z)=h_{P*Q}^*(z) \quad\Longleftrightarrow\quad \ell_P=1\ \text{or}\ \ell_Q=1.
\]
Under these equivalent conditions, for $h^*$-polynomial, free sums preserve symmetry, log-concavity with no internal zeros, real-rootedness, $\gamma$-positivity, and ultra log-concavity in either dimension order or actual-degree order. They also preserve the simultaneous conditions of symmetry and unimodality.
\end{corollary}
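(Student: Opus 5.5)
The equivalence comes straight from combining the two numerator formulas. Since $h_{P*Q}^*(z)=h_P^*(z)h_Q^*(z)$ holds for every pair $P,Q$ (\cref{sec:join-pyramid}), the condition $h_{P\oplus Q}^*=h_{P*Q}^*$ is literally the same as $h_{P\oplus Q}^*=h_P^*h_Q^*$. By \eqref{eq:free-product-criterion} in \cref{thm:free-formula}, this holds exactly when $\ell_P=1$ or $\ell_Q=1$.

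For the second part, assume $\ell_P=1$ or $\ell_Q=1$. Then $h_{P\oplus Q}^*=h_{P*Q}^*=h_P^*h_Q^*$, so each $h^*$-coefficient property is inherited exactly as in \cref{cor:join-shape}. Symmetry, log-concavity with no internal zeros, real-rootedness, $\gamma$-positivity, and symmetric unimodality are all statements about the polynomial itself. They carry over verbatim from the product theorem preceding \cref{cor:join-shape}.

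The two bookkeeping points concern ultra log-concavity, because the ambient orders differ: $\dim(P\oplus Q)=p+q$, while $\dim(P*Q)=p+q+1$.

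\begin{enumerate}
\item \textbf{Dimension order.} Suppose $h_P^*\in\ULC(p)$ and $h_Q^*\in\ULC(q)$. Liggett's theorem gives $h_P^*h_Q^*\in\ULC(p+q)$ directly. This is exactly the dimension of the free sum, so no appeal to \cref{lem:ulc-order} is needed here; it would only be needed to pass to order $p+q+1$ for the join.
\item \textbf{Actual-degree order.} The degree is additive under multiplication, so $s_{P\oplus Q}=s_P+s_Q$. Liggett's theorem applied with $A=s_P$ and $B=s_Q$ then gives $h^*_{P\oplus Q}\in\ULC(s_{P\oplus Q})$.
\end{enumerate}

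The main point to watch is that \cref{thm:free-formula} is stated for full-dimensional polytopes containing the origin, possibly on the boundary. This matches the corollary's hypotheses, so nothing further is required. There is no genuine obstacle: the proof is a direct combination of the cited product criterion with the join corollary, with the degree and dimension parameters adjusted as above.
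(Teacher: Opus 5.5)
Your proposal is correct and follows the same route as the paper. The equivalence comes from combining the unconditional join identity $h_{P*Q}^*=h_P^*h_Q^*$ with the product criterion \eqref{eq:free-product-criterion}, and the preservation statements are then inherited from the product theorem behind \cref{cor:join-shape}, with Liggett's theorem landing directly in $\ULC(p+q)=\ULC(\dim(P\oplus Q))$ and $s_{P\oplus Q}=s_P+s_Q$ handling the actual-degree order.
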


Under \eqref{eq:free-product-criterion}, we have 
\[ \Ehr_{P\oplus Q}(z)=(1-z)\Ehr_{P*Q}(z), \qquad L_{P*Q}(n)=\sum_{k=0}^n L_{P\oplus Q}(k),
\]
and
\[ s_{P\oplus Q}=s_P+s_Q,\qquad r_{P\oplus Q}=r_P+r_Q-1.
\]
The series identity follows from the respective denominators $(1-z)^{p+q+1}$ and $(1-z)^{p+q+2}$. 
The codegree identity follows by substituting the dimension and numerator degree in $r=\dim+1-\deg h^*$.

\begin{theorem}
The free sum of two lattice polytopes does not preserve the unimodality of the $h^*$-polynomial.
\end{theorem}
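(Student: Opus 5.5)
The statement is a non-preservation claim, so one explicit counterexample suffices. The plan is to reuse the join counterexample (\cref{ex:join-unimodal}) and move it to the free sum through \cref{cor:free-join}. Specifically, I would look for lattice polytopes $P,Q$ containing the origin with unimodal $h^*$-polynomials, at least one of which satisfies $\ell_P=1$. By \eqref{eq:free-product-criterion} we then have $h^*_{P\oplus Q}=h^*_Ph^*_Q$, and it remains to choose the factors so that this product is not unimodal.

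The simplest choice keeps the Ferroni--Higashitani polytope $P$ from \cref{ex:join-unimodal} and makes it full-dimensional. This requires passing to integer affine coordinates on $\aff(P)\cong\R^5$, which does not change $h_P^*(z)=1+6z+6z^2+113z^3$. For $Q$ I would take a reflexive polytope, for which $\ell_Q=1$. Since $P$ need not contain the origin, we translate $P$ by one of its lattice points; this changes neither $h^*_P$ nor unimodality. The product criterion requires only that one factor has $\ell=1$, so $h^*_{P\oplus Q}=h^*_P\,h^*_Q$ holds with the reflexive polytope $Q$ as the factor satisfying $\ell_Q=1$.

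The main obstacle is the choice of $Q$. Multiplying by a symmetric unimodal $h^*_Q$ tends to smooth out the jump $6\to113$. The candidates are:
\begin{itemize}
\item $Q=[-1,1]$, with $h^*=1+z$, which gives
\[(1+6z+6z^2+113z^3)(1+z)=1+7z+12z^2+119z^3+113z^4,\]
and this is unimodal, so it fails;
\item $Q$ a reflexive polytope with $h^*_Q=1+z+z^2$ (for instance a reflexive triangle of normalized volume $3$?), for which the product has coefficients
\[1,\ 7,\ 13,\ 125,\ 119,\ 113,\]
again unimodal.
\end{itemize}

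Products with a symmetric reflexive factor seem to keep unimodality, so the better route is to choose $Q$ itself non-reflexive but IDP-free with a large top jump, mimicking $P*P$. The cleanest option is $Q=P$ shifted so that $0$ is a vertex, provided one can arrange $\ell=1$. Concretely, I would check whether translating $P$ so that $0$ lies on a suitable face makes all facets not containing $0$ have lattice distance one. In that case $h^*_{P\oplus P}=(h^*_P)^2$, which has the trough $1392>1356<12769$ exhibited in \cref{ex:join-unimodal}.

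If no translate of $P$ works, the fallback is to take $Q=\pyr$-type or $\Delta$-type factors whose facets at positive height have distance one. Such a factor $Q$ would have origin apex with $\ell_Q=1$ and $h^*_Q=h^*_P$. For example, take $Q=\pyr(P)$ with apex moved to $0$: every facet not containing the apex is the base, at lattice height one, so $\ell_Q=1$, and $h^*_Q=h^*_P$ by \eqref{eq:pyramid}. Then $P\oplus\pyr(P)$ (translating $P$ to contain $0$) has $h^*=(h^*_P)^2$. This product is not unimodal by the computation in \cref{ex:join-unimodal}, and I expect this last construction to be the one that works.
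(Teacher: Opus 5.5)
Your final construction is correct and is essentially the paper's argument: the paper also places the Ferroni--Higashitani polytope (in integer affine coordinates) as the base of a height-one pyramid with apex at the origin, which forces $\ell=1$. It then applies the product criterion \eqref{eq:free-product-criterion} to get $h^*=(1+6z+6z^2+113z^3)^2$, with the trough $1392>1356<12769$. The only difference is that the paper takes $\widehat A\oplus\widehat A$ (dimension $12$), whereas you correctly note that one factor with $\ell=1$ suffices and use $P\oplus\pyr(P)$ (dimension $11$); the exploratory detours through reflexive factors and a vertex-at-origin translate of $P$ play no role and can be dropped.
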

\begin{proof}
Let $A$ be the five-dimensional polytope in \cref{ex:join-unimodal}, expressed in integer affine coordinates.
Set $\widehat A=\conv(0,(A,1))\subset\R^6$. It is a height-one lattice pyramid, so 
$$h_{\widehat A}^*(z)=1+6z+6z^2+113z^3.$$
Every lateral facet contains the apex $0$, and the only facet not containing $0$ is the base at height one. 
Its normal is the primitive last coordinate vector, so $\ell_{\widehat A}=1$.
The twelve-dimensional free sum therefore has numerator
\[ h_{\widehat A\oplus\widehat A}^*(z) =1+12z+48z^2+298z^3+1392z^4+1356z^5+12769z^6.
\]
$h_{\widehat A}^*(z)$ is unimodal, but $h_{\widehat A\oplus\widehat A}^*(z)$ is not unimodal since $1392>1356<12769$. 
Thus even the exact numerator product formula does not preserve unimodality without an additional hypothesis.
\end{proof}

There is a geometric reason for the different product formulas.
For $P*Q$, we define the project by $\pi(x,y,t)=(x,y)$. Then $\pi(P*Q)=P\oplus Q$.
For a real height $t\in[0,n]$, the defining convex combinations give
\[
 n(P*Q)\cap\{(x,y,t):t\text{ is fixed}\}=((n-t)P)\times(tQ)\times\{t\}.
\]
Using \eqref{eq:gauge-membership}, a point $(x,y)\in n(P\oplus Q)$
has exactly the following possible heights above it:
\[
 \rho_Q(y)\leq t\leq n-\rho_P(x).
\]
It follows both that
\begin{equation}\label{eq:free-gauge}
 \rho_{P\oplus Q}(x,y)=\rho_P(x)+\rho_Q(y)\leq n
\end{equation}
and that a lattice point $(x,y)\in n(P\oplus Q)$ has an integer
lift in $n(P*Q)$ if and only if
\begin{equation}\label{eq:integer-lift}
 \lceil\rho_P(x)\rceil+\lceil\rho_Q(y)\rceil\leq n.
\end{equation}
For the first assertion, membership in a real dilate of the free
sum is equivalent to the interval of possible heights being
nonempty. For the second, an integer height $k$ must satisfy
$\lceil\rho_Q(y)\rceil\leq k\leq n-\lceil\rho_P(x)\rceil$;
such an integer exists exactly under \eqref{eq:integer-lift}.

\begin{example}
Let $P=Q=[0,2]$. The point $(1,1)$ belongs to
$P\oplus Q=2\Delta_2$, but its only possible height in $P*Q$ is
$t=1/2$. Thus the projection on the ambient integer spaces is
surjective, whereas the projection on the lattice points of these
polytopes is not. In this case
\[
 h_P^*(z)=h_Q^*(z)=1+z,\qquad
 h_{P*Q}^*(z)=1+2z+z^2,\qquad
 h_{P\oplus Q}^*(z)=1+3z.
\]
Indeed, $L_{2\Delta_2}(n)=\binom{2n+2}{2}=2n^2+3n+1$, which gives the last numerator by \eqref{eq:conversion}.
Note that $h_P^*(z)$ and $h_Q^*(z)$ are symmetric and $\gamma$-positive, but $h_{P\oplus Q}^*(z)$ is not symmetric.
\end{example}

\subsection{Counterexamples}

We already know that the $h^*$-polynomial of free sum do not necessarily preserve symmetry, unimodality, or $\gamma$-positivity. We now give counterexamples to other coefficient properties

\begin{theorem}\label{ex:free-lc}
The free sum of two lattice polytopes does not preserve the log-concavity of the $h^*$-polynomial.
\end{theorem}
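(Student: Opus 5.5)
The plan is to produce an explicit counterexample. The computational tool is \cref{thm:free-formula}: $h_{P\oplus Q}^*(z)=\Psi(\widetilde h_P(z)\widetilde h_Q(z))$. By \cref{cor:free-join}, if $\ell_P=1$ or $\ell_Q=1$, then $h^*_{P\oplus Q}=h_P^*h_Q^*$, and log-concavity with no internal zeros is preserved. So the example must have $\ell_P>1$ and $\ell_Q>1$. The failure has to come from the rounding map $\Psi$, which can move weight from a middle exponent into a higher one. The target is a free sum whose numerator has a small linear coefficient $a_1$ and a large quadratic coefficient $a_2$, so that $a_1^2<a_0a_2=a_2$ (or, in general, some $a_i^2<a_{i-1}a_{i+1}$).

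\textbf{Step 1: compute $\widetilde h$ for small building blocks.} For $P=[-a,b]$ the gauge is explicit (see the interval formula above), giving $\widetilde h_{[-a,b]}(z)=\sum_{k=0}^{b-1}z^{k/b}+\sum_{k=1}^{a}z^{k/a}$. For $P=[0,m]$ with the origin at an endpoint, it gives $\widetilde h(z)=\sum_{k=0}^{m-1}z^{k/m}$. In both cases $\Psi$ returns the log-concave numerator $1+(\text{length}-1)z$. I would also compute $\widetilde h$ for a few lattice polygons and $3$-simplices, taking a chosen origin on a vertex or edge so that the facets not containing $0$ sit at large lattice distance. This reuses the facet description \eqref{eq:gauge-facets}: count lattice points on the facets of $\lambda P$ not containing $0$, level by level.

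\textbf{Step 2: search, then certify.} With these blocks, I would expand $\widetilde h_P\widetilde h_Q$ by hand or by a short script and apply $\Psi$. The aim is a pair whose fractional exponents mostly lie in $(1,2]$ after multiplication, so that they are pushed up into $z^2$ while few exponents land in $(0,1]$. Interval$\oplus$interval pairs are too small, since they give $2$-dimensional numerators, which are always log-concave (e.g.\ $1+10z+5z^2$ for $[-2,2]\oplus[-2,2]$). So I expect to need at least one factor of dimension $\ge2$ with a degenerate origin placement, or the free sum of three factors. In the latter case $\widetilde h$ of the first free sum is again a product by \cref{thm:free-formula}, so iterating is legitimate. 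Once a candidate pair $P,Q$ is found, I would check that $h_P^*$ and $h_Q^*$ are log-concave with no internal zeros, write down $h^*_{P\oplus Q}$, and exhibit the violated inequality. As an independent check, I would compute $L_{P\oplus Q}(n)$ with LattE and convert by \eqref{eq:conversion}.

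\textbf{Main obstacle.} The hardest step is Step 2: actually finding the pair. Since $\Psi$ merges rounded exponents and the total sum of coefficients is preserved, low-dimensional sums tend to remain log-concave. The violation therefore has to be engineered: the mass of $\widetilde h_P\widetilde h_Q$ must be arranged so that the linear coefficient after rounding is small compared with a neighbouring product of coefficients. My first attempt would be $P=Q=$ a lattice pyramid over a lattice polygon, with the origin at a vertex of the base and a far facet at lattice distance $\ell\ge3$. In that case most gauge values are fractions just above integers, and rounding up concentrates weight in the top degree.
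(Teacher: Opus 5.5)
Your reduction is sound. By \cref{cor:free-join}, a counterexample needs $\ell_P>1$ and $\ell_Q>1$, so the failure must come from the rounding map $\Psi$ in \cref{thm:free-formula}. Your formulas for $\widetilde h$ of intervals are correct, and two-dimensional free sums indeed cannot fail. But the statement is an existence claim, and its entire proof is one explicit pair $P,Q$ together with a computed $h^*_{P\oplus Q}$. Your proposal stops exactly there. Step~2 is a search plan, the ``first attempt'' is neither specified nor computed, and you yourself name finding the pair as the main obstacle. As written, nothing has been proved. The heuristics about where gauge values fall do not substitute for a certified example.

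Your own tools close the gap at once, and no pyramid is needed: a polygon with the origin at a vertex already works. Take $P=Q=\conv((0,0),(2,1),(1,2))$ with the origin at the vertex $0$. The only facet not containing $0$ is $x_1+x_2\leq3$, so $\ell_P=3$ and $\rho_P(x)=(x_1+x_2)/3$ on the cone spanned by $(2,1)$ and $(1,2)$. This cone is simplicial, both generators have gauge one, and its half-open fundamental parallelepiped contains exactly the lattice points $(0,0),(1,1),(2,2)$. Hence $\widetilde h_P(z)=1+z^{2/3}+z^{4/3}$ and $h_P^*(z)=1+z+z^2$, which is log-concave with no internal zeros. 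Then
\[\widetilde h_P(z)^2=1+2z^{2/3}+3z^{4/3}+2z^2+z^{8/3},\qquad h^*_{P\oplus Q}(z)=\Psi\bigl(\widetilde h_P(z)^2\bigr)=1+2z+5z^2+z^3,\]
and $2^2<1\cdot5$. This is exactly the paper's counterexample. The paper certifies the numerator directly from the fundamental parallelepiped of the four-simplex $P\oplus Q=\conv(0,(2,1,0,0),(1,2,0,0),(0,0,2,1),(0,0,1,2))$, whose normalized volume is $9=1+2+5+1$. That computation is the independent check you intended to get from LattE.
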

\begin{proof}
Let
\[ P=Q=\conv((0,0),(2,1),(1,2)).
\]
The determinant of the two nonzero vertices is $3$, so each triangle has area $3/2$. 
We know that 
$h_P^*(z)=h_Q^*(z)=1+z+z^2$, which is log-concave with no internal zeros.
Their free sum is the four-dimensional simplex
\[ S=\conv\bigl(0,(2,1,0,0),(1,2,0,0),(0,0,2,1),(0,0,1,2)\bigr).
\]
We compute its $h^*$-polynomial by the fundamental parallelepiped formula from \cref{sec:definitions}. 
Consequently, we have 
\[ h_S^*(z)=1+2z+5z^2+z^3.
\]
The polynomial $h_S^*(z)$ is not log-concave since $2^2<1\cdot5$.
\end{proof}

In the proof of \cref{ex:free-lc}, each polytope is a translate of a reflexive triangle, but its chosen origin is a vertex. Indeed, translating by $-(1,1)$ gives the facet inequalities $-2x+y\leq1$, $x-2y\leq1$, and $x+y\leq1$.
The free sum of these translated factors has numerator $(1+z+z^2)^2$ by \cref{cor:free-join}.
This contrast is caused by the position of the origin, although the Ehrhart polynomials of $P$ and $Q$ are unchanged.

\begin{theorem}\label{ex:free-ulc}
The free sum of two lattice polytopes does not preserve the ultra log-concavity or real-rootedness of the $h^*$-polynomial.
\end{theorem}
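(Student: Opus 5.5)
The plan is to exhibit one explicit pair $P,Q$ with real-rooted $h^*$-polynomials whose free sum has an $h^*$-polynomial that is not log-concave. This handles both properties at once. Real-rootedness of $h_P^*$ and $h_Q^*$ implies $h_P^*\in\ULC(s_P)$, and hence, by \cref{lem:ulc-order}, $h_P^*\in\ULC(p)$; the same holds for $Q$. On the output side, failure of log-concavity implies failure of $\ULC(D)$ for every $D$, and in particular failure of real-rootedness, by Newton's inequalities.

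By \cref{cor:free-join}, any such example must have $\ell_P>1$ and $\ell_Q>1$. I will therefore work entirely with the weighted formula $h_{P\oplus Q}^*=\Psi(\widetilde h_P\widetilde h_Q)$ of \cref{thm:free-formula}, where rounding up the fractional exponents is what can create a dip in the coefficients.

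The natural building blocks are intervals $[0,k]$ with the origin at an endpoint. For these,
\[
\widetilde h_{[0,k]}(z)=\sum_{j=0}^{k-1}z^{j/k},\qquad h^*_{[0,k]}(z)=1+(k-1)z,
\]
which is trivially real-rooted. Iterated free sums of such intervals give simplices $\conv(0,k_1e_1,\ldots,k_me_m)$ whose weighted polynomial is the product $\prod_i\sum_{j<k_i}z^{j/k_i}$. Applying $\Psi$ to this product amounts to counting tuples $(j_i)$ by $\lceil\sum_i j_i/k_i\rceil$.

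I will choose $P$ as one such simplex, say $\conv(0,k_1e_1,k_2e_2)$, and check directly that its degree-$\le2$ numerator is real-rooted; the test is discriminant $\geq0$, as in $1+4z+z^2$ for $(k_1,k_2)=(2,3)$. For $Q$ I will take a second simplex or interval. Then I will compute $\Psi$ of the four-fold product and look for a violation $a_i^2<a_{i-1}a_{i+1}$.

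I expect the main obstacle to be this search step. Equal denominators, as in $k\Delta_d$, seem to give real-rooted outputs such as $1+10z+5z^2$, so the example needs mixed denominators arranged so that many fractional exponents round up into a single top degree, while the next-to-top degree stays small. This is the same mechanism that produced $1+2z+5z^2+z^3$ in \cref{ex:free-lc}.

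If intervals alone do not suffice, I will instead replace the triangle of \cref{ex:free-lc} by a lattice triangle with the origin at a vertex, the opposite edge at lattice distance at least $2$, and no interior points. Such a triangle has $h^*=1+az$, which is real-rooted. Its weighted polynomial $\widetilde h$ spreads the $a$ non-constant terms over fractional levels, and the product of two copies may again concentrate mass in the top rounded degree.

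Once a candidate is found, I will verify the output numerator independently using the fundamental parallelepiped formula for the resulting simplex, as in \cref{ex:free-lc}, and I may cross-check it with LattE. This confirms that log-concavity fails, and hence that both ultra log-concavity and real-rootedness fail.
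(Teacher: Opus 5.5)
Your reductions are sound. Real-rooted inputs lie in $\ULC(s)\subseteq\ULC(\dim)$ by \cref{lem:ulc-order}, and a non-log-concave output fails every $\ULC(D)$ and real-rootedness. The gap is that the proposal never produces a pair $P,Q$. For an existence statement, the explicit example together with its verified $h^*$-polynomial is the entire proof. What you describe is a search, and you give no evidence that it succeeds.

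You have also set the search a harder target than the theorem needs. You require an output that is not even log-concave, but it suffices to violate $\ULC$ or real-rootedness. The paper's proof does only the latter. It takes $T=\conv((-3,-2),(-2,-3),(2,2))$ with $h_T^*=(1+2z)^2$ and $I=[-2,2]$ with $h_I^*=1+3z$. Slicing $n(T\oplus I)$ along the last coordinate and counting lattice points in the dilates $(k/2)T$ gives $h^*_{T\oplus I}=1+11z+16z^2+8z^3$. This polynomial is log-concave, but it fails $\ULC(3)$ because $(16/3)^2<(11/3)\cdot 8$, so it is not real-rooted. Your reduction discards exactly this kind of example, and it is not clear that real-rooted inputs can ever give a non-log-concave free sum.

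The concrete families you name do not obviously help, and one of them cannot work at all.
\begin{itemize}
\item The first candidate your plan points to is $\conv(0,2e_1,3e_2)\oplus\conv(0,2e_3,3e_4)$. Its lattice points satisfy $3x_1+2x_2+3x_3+2x_4\leq 6$, giving $h_1^*=14$, and in fact $h^*=1+14z+20z^2+z^3$. The cubic discriminant is $40437>0$, so this output is real-rooted.
\item The fallback fails for every choice. Suppose $h_P^*=1+az$ and $h_Q^*=1+bz$. Then $\widetilde h_P$ and $\widetilde h_Q$ have nonnegative coefficients and constant term $1$, so all their other exponents lie in $(0,1]$. Hence $h^*_{P\oplus Q}=1+c_1z+c_2z^2$, where $c_1\geq a+b$ (from the terms multiplied by the other factor's constant $1$) and $c_2\leq ab$. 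It follows that $c_1^2-4c_2\geq(a-b)^2\geq 0$, so the output is always real-rooted.
\item The mechanism behind $1+2z+5z^2+z^3$ in \cref{ex:free-lc} uses inputs $1+z+z^2$, which are not real-rooted, so it does not carry over.
\end{itemize}

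To turn the proposal into a proof, commit to a specific pair, aim for failure of $\ULC$ rather than of log-concavity, and carry out the lattice-point count.
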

\begin{proof}
Let
\[T=\conv((-3,-2),(-2,-3),(2,2)),\qquad I=[-2,2].
\]
The triangle is described by
\[ -x-y\leq5,\qquad 5x-4y\leq2,\qquad -4x+5y\leq2.
\]
Their respective $h^*$-polynomials are as follows
\[
 h_T^*(z)=1+4z+4z^2=(1+2z)^2,\qquad h_I^*(z)=1+3z.
\]
Both polynomials are real-rooted and ultra log-concave in their actual-degree orders, which here equal their dimension orders.

To count the rational slices needed for the free sum, fix an integer $k\geq0$ and set $s=x+y$. The inequalities for
$(k/2)T$ imply
\[ s\geq-\frac{5k}{2},\qquad \frac{5s-k}{9}\leq x\leq\frac{4s+k}{9}.
\]
The interval for $x$ is nonempty only if $s\leq2k$. Conversely,
these conditions imply all three facet inequalities.
As $x,y$ are integers exactly when $s,x$ are integers, we obtain
\[ |(k/2)T\cap\Z^2|= \sum_{s=\lceil-5k/2\rceil}^{2k} \max\left\{0,\ \left\lfloor\frac{4s+k}{9}\right\rfloor -\left\lceil\frac{5s-k}{9}\right\rceil+1\right\}.
\]
Here $\lfloor a\rfloor$ is the greatest integer at most $a$; the maximum assigns zero to an interval containing no integers.
Evaluating the finite sums gives
\[ \begin{array}{c|rrrrrrr}
 k&0&1&2&3&4&5&6\\ \hline
 |(k/2)T\cap\Z^2|&1&3&7&13&22&32&46
 \end{array}.
\]
Since $\rho_I(j)=|j|/2$, Equation~\eqref{eq:free-gauge} shows that the slice of $n(T\oplus I)$ at integer last coordinate $j$ is $(n-|j|/2)T$, for $|j|\leq2n$.
The height-zero slice occurs once and every nonzero height has its negative counterpart. Hence
\[L_{T\oplus I}(n)=\sum_{j=-2n}^{2n}|(n-|j|/2)T\cap\Z^2| =|nT\cap\Z^2|  +2\sum_{k=0}^{2n-1}|(k/2)T\cap\Z^2|.
\]
At $n=0,1,2,3$, this gives $1,15,70,202$.
The dimension of $T\oplus I$ is three. Equation~\eqref{eq:conversion} therefore gives all its $h^*$ coefficients:
\begin{align*}
 h_0^*=1,\quad
 h_1^*=15-4=11,\quad
 h_2^*=70-4\cdot15+6=16,\quad
 h_3^*=202-4\cdot70+6\cdot15-4=8.
\end{align*}
Therefore, the $h^*$-polynomial $h_{T\oplus I}^*(z)$ is not ultra log-concave because $(16/3)^2\leq (11/3)\cdot8$.
Furthermore, the $h^*$-polynomial $h_{T\oplus I}^*(z)$ is not real-rooted.
\end{proof}

\begin{theorem}\label{ex:bipyramid}
The free sum of two lattice polytopes does not preserve Ehrhart positivity or magic positivity.
\end{theorem}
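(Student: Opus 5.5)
The plan is to build the counterexample as a bipyramid $P\oplus I$ with $I=[-1,1]$, reusing the polytope $P=[0,7]^4$ from \cref{ex:pyramid-negative}. Both factors contain the origin, with the origin a vertex of $P$, which is allowed in this section. $P$ is Ehrhart positive and magic positive, as already observed there, since $L_P(n)=(7n+1)^4$ and $7n+1=(n+1)+6n$. The interval $I$ has $L_I(n)=2n+1=(n+1)+n$, so it is also magic positive, hence Ehrhart positive. It then suffices to show that $P\oplus I$ has a negative Ehrhart coefficient. Magic positivity implies Ehrhart positivity, so a negative Ehrhart coefficient also rules out magic positivity, and one example settles both claims.

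To compute $L_{P\oplus I}$, I would use that $I=[-1,1]$ is reflexive, so $\ell_I=1$. \Cref{thm:free-formula} then gives the product criterion and hence $\Ehr_{P\oplus I}(z)=(1-z)\Ehr_P(z)\Ehr_I(z)$. Since $\Ehr_I(z)=(1+z)/(1-z)^2$, this reads $\Ehr_{P\oplus I}(z)=\Ehr_P(z)\,\frac{1+z}{1-z}$, and therefore
\[
L_{P\oplus I}(n)=L_P(n)+2\sum_{k=0}^{n-1}L_P(k)=2L_{\pyr(P)}(n)-L_P(n),
\]
using \eqref{eq:pyramid}. As an independent check, \eqref{eq:free-gauge} with $\rho_I(j)=|j|$ says that the slice of $n(P\oplus I)$ at integer height $j$ is $(n-|j|)P$. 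Summing over $|j|\le n$ gives the same formula directly.

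The main step is reading off one coefficient. The linear coefficient of $L_{\pyr(P)}$ was already computed in \cref{ex:pyramid-negative} as $-481/30$. The linear coefficient of $(7n+1)^4$ is $4\cdot 7=28$. Hence the linear coefficient of $L_{P\oplus I}(n)$ is
\[
2\cdot\left(-\tfrac{481}{30}\right)-28=-\tfrac{481}{15}-28<0 .
\]
So the $5$-dimensional polytope $P\oplus I$ is neither Ehrhart positive nor magic positive, although both summands are.

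The only point needing care is justifying the product formula when the origin is a vertex of $P$. \Cref{thm:free-formula} requires only $\ell_P=1$ or $\ell_Q=1$, and $\ell_I=1$ holds here. The slice computation gives an alternative argument that avoids this issue entirely. Optionally, I would write out the full polynomial, since it follows routinely from the two displayed polynomials.
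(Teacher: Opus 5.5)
Your proof is correct, and it differs from the paper's only in the choice of counterexample.

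The slice count gives $L_{P\oplus I}=2L_{\pyr(P)}-L_P$. Geometrically, for $0\in P$ the free sum $P\oplus[-1,1]$ is $\pyr(P)$ and its mirror image glued along $P\times\{0\}$. Your expression expands to
\[
L_{P\oplus I}(n)=\tfrac{4802}{5}n^5+686n^4+\tfrac{5390}{3}n^3+714n^2-\tfrac{901}{15}n+1,
\]
which agrees with $8^4+2=4098$ at $n=1$.

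The paper slices in the same way, but over the reflexive cube. It takes $B_{10}=[-1,1]^{10}\oplus[-1,1]$, with $L_{B_d}(n)=(2n+1)^d+2\sum_{k<n}(2k+1)^d$. This gives an $11$-dimensional example with linear coefficient $-4450/33$.

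Your route is more economical. It lives in dimension $5$ and recycles \cref{ex:pyramid-negative}. It also shows a general principle: whenever $P$ is Ehrhart positive but $\pyr(P)$ is not, $P\oplus[-1,1]$ is a counterexample, since subtracting $L_P$ only lowers coefficients.

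The paper's choice proves something stronger. Both of its summands are reflexive, so positivity fails even in the most favorable setting. It also reuses $B_{10}$ as the polar of $C_{10}\times[-1,1]$ in \cref{prop:polar-positive}.

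Finally, your reliance on the boundary-origin convention is inessential. The slice count involves only $L_P$, so translating the cube to $[-3,4]^4$ puts the origin in the interior and leaves the Ehrhart polynomial unchanged. That factor is still not reflexive, however.
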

\begin{proof}
For $d\geq1$, define
\[ B_d=[-1,1]^d\oplus[-1,1].
\]
The cube and the interval are reflexive. They are also magic positive, since their Ehrhart polynomials are $(2n+1)^d$
and $2n+1$, and $2n+1=n+(n+1)$.
The gauges of the factors are $\max_i|x_i|$ and $|j|$.
Thus the integer slice of $nB_d$ at last coordinate $j$ is the cube $\big[-(n-|j|),n-|j|\big]^d$ whenever $|j|\leq n$.
It follows that
\[ L_{B_d}(n)=(2n+1)^d+2\sum_{k=0}^{n-1}(2k+1)^d.
\]
For $d=10$, expansion gives
\begin{align}
L_{B_{10}}(n)={}&\frac{2048}{11}n^{11}+1024n^{10} +\frac{12800}{3}n^9+11520n^8+17152n^7\notag\\
&+13440n^6+6080n^5+3360n^4+1976n^3+180n^2-\frac{4450}{33}n+1.\label{eq:bipyramid-polynomial}
\end{align}
Therefore, $L_{B_{10}}(n)$ is neither Ehrhart positive nor magic positive.
\end{proof}

A smaller example distinguishes magic positivity from ordinary Ehrhart positivity. Write
$C_d=\conv(\pm e_1,\ldots,\pm e_d)$ for the standard cross-polytope. Its standard counting formula is
\cite[Section~2.2.1]{Liu}
\[ L_{C_d}(n)=\sum_{j=0}^d2^j\binom dj\binom nj.
\]
Specializing to $d=2$ gives
\[ L_{C_2}(n)=(n+1)^2+n^2
\]
is magic positive, as is $L_{[-1,1]}(n)=n+(n+1)$. Their free sum is $C_3$, and the same count yields
\[ L_{C_3}(n)=\frac43n^3+2n^2+\frac83n+1 =(n+1)^3-\frac13n(n+1)^2-\frac13n^2(n+1)+n^3.
\]
Uniqueness of the magic basis expansion proves that $C_3$ is not magic positive, although its ordinary Ehrhart
coefficients are positive.

\subsection{Spanning, decomposition, and triangulations}

\begin{theorem}
If $P$ and $Q$ are spanning and contain $0$, then $P\oplus Q$ is spanning.
\end{theorem}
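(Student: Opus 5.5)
Since the spanning property does not depend on the chosen base point $p_0$, we may take $p_0=0$, which is a lattice point of $P$, of $Q$, and of $P\oplus Q$. Spanning then says that the lattice points of the polytope generate the whole integer lattice as an abelian group. So the goal is to show that the subgroup $G\subseteq\Z^{p+q}$ generated by $(P\oplus Q)\cap\Z^{p+q}$ is all of $\Z^{p+q}$.

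The key observation is that the embedded copies of $P$ and $Q$ sit inside the free sum. By definition, $P\oplus Q=\conv\bigl((P,0)\cup(0,Q)\bigr)$ contains $(P,0)$ and $(0,Q)$. Hence for every $u\in P\cap\Z^p$ the point $(u,0)$ is a lattice point of $P\oplus Q$, and likewise $(0,w)$ for every $w\in Q\cap\Z^q$.

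The proof then proceeds in two steps.
\begin{enumerate}
\item Since $P$ is spanning with base point $0$, the points $u\in P\cap\Z^p$ generate $\Z^p$. Therefore $G$ contains $\Z^p\times\{0\}$.
\item By the same argument applied to $Q$, the group $G$ contains $\{0\}\times\Z^q$.
\end{enumerate}
Since $\Z^{p+q}=(\Z^p\times\{0\})+(\{0\}\times\Z^q)$, we conclude $G=\Z^{p+q}$, so $P\oplus Q$ is spanning.

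The only point needing care is the change of base point. The definition of spanning fixes some $p_0\in P\cap\Z^p$, so I must check that the generated group does not depend on this choice. This holds because
\[
u-p_0'=(u-p_0)-(p_0'-p_0)
\]
for any other lattice point $p_0'$. This is what allows taking $p_0=0$ in $P$, in $Q$, and in the free sum, which is legitimate since $0$ is a lattice point of each by the standing hypothesis of this section. No obstacle is expected beyond this; in particular, the argument does not use whether the origin is interior or on the boundary.
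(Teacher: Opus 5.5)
Your proposal is correct and is essentially the paper's own proof: it also takes $0$ as the base point and notes that the lattice points $(x,0)$ with $x \in P \cap \Z^p$ and $(0,y)$ with $y \in Q \cap \Z^q$ lie in $P \oplus Q$ and generate $\Z^p \times \{0\}$ and $\{0\} \times \Z^q$. Your extra check that the generated group does not depend on the base point is a harmless addition that the paper leaves implicit.
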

\begin{proof}
Use $0$ as the base point in the definition of spanning. The vectors $(x,0)$ with $x\in P\cap\Z^p$ generate
$\Z^p\times\{0\}$, and the vectors $(0,y)$ with $y\in Q\cap\Z^q$ generate $\{0\}\times\Z^q$.
All these vectors are lattice points of the free sum. Their integer combinations therefore generate $\Z^{p+q}$.
\end{proof}

For IDP, there is a useful sufficient condition as follow.

\begin{proposition}\label{prop:free-idp}
Suppose that $P$ and $Q$ are IDP and contain their origins.
If, for either $P$ or $Q$, every facet of not containing the origin has lattice distance one, then $P\oplus Q$ is IDP.
In particular, this holds if one factor is reflexive.
\end{proposition}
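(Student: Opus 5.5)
By symmetry assume $\ell_Q=1$. Then every facet of $Q$ not containing $0$ has lattice distance one, so by \eqref{eq:gauge-facets} the gauge $\rho_Q(y)$ is an integer for every lattice point $y\in\operatorname{cone}(Q)\cap\Z^q$. The plan is to decompose a lattice point of $n(P\oplus Q)$ into a part lying in an integral dilate of $P$ and a part lying in an integral dilate of $Q$. We then apply IDP to each factor separately and pad with the origin, which is a lattice point of both factors and hence of $P\oplus Q$.

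Let $(x,y)\in n(P\oplus Q)\cap\Z^{p+q}$. By \eqref{eq:free-gauge} we have $x\in\operatorname{cone}(P)$, $y\in\operatorname{cone}(Q)$, and $\rho_P(x)+\rho_Q(y)\le n$. Set $m=\rho_Q(y)$, which is an integer with $0\le m\le n$, and put $k=n-m$. Then $\rho_P(x)\le k$, so $x\in kP$ by \eqref{eq:gauge-membership}, and $y\in mQ$. Here $x$ and $y$ are lattice points because $(x,y)$ is one.

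If $k\ge1$, IDP of $P$ writes $x=x_1+\cdots+x_k$ with $x_i\in P\cap\Z^p$. If $k=0$, then $\rho_P(x)=0$ forces $x=0$, and we take the empty sum. Similarly, IDP of $Q$ writes $y=y_1+\cdots+y_m$ with $y_j\in Q\cap\Z^q$, again with the empty sum when $m=0$ and $y=0$. Then
\[(x,y)=\sum_{i=1}^k(x_i,0)+\sum_{j=1}^m(0,y_j),\]
which is a sum of exactly $n$ lattice points of $P\oplus Q$, so $P\oplus Q$ is IDP.

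This is essentially \eqref{eq:integer-lift}: the integer lift lemma shows that $n(P\oplus Q)$ and the height slices of $n(P*Q)$ agree on lattice points, because $\lceil\rho_Q(y)\rceil=\rho_Q(y)$. The only subtle point is the integrality of $\rho_Q$ at lattice points. This follows from \eqref{eq:gauge-facets}, since each $b_H=1$ and $\langle u_H,y\rangle\in\Z$, so the maximum of $0$ and integers is an integer. For the final sentence of the statement, a reflexive factor has $0$ in its interior and every facet at lattice distance one, so $\ell=1$ and the first part applies.
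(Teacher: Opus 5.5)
Your proof is correct and follows the paper's argument: with $\ell_Q=1$ (the paper takes $\ell_P=1$), \eqref{eq:gauge-facets} makes $\rho_Q(y)$ an integer, so \eqref{eq:free-gauge} and \eqref{eq:gauge-membership} split $(x,y)$ into $x\in(n-\rho_Q(y))P$ and $y\in\rho_Q(y)Q$, and IDP in each factor then supplies exactly $n$ summands. The reflexive case is reduced to $\ell=1$ in the same way as in the paper.
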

\begin{proof}
Interchanging the factors if necessary, assume $\ell_P=1$. Let $(x,y)\in n(P\oplus Q)\cap\Z^{p+q}$.
By~\eqref{eq:free-gauge}, we have $\rho_P(x)+\rho_Q(y)\leq n$.
Equation~\eqref{eq:gauge-facets} expresses $\rho_P(x)$ as the maximum of $0$ and integral facet functionals evaluated
at $x$, because every positive facet constant is one. Thus $a=\rho_P(x)$ is an integer with $0\leq a\leq n$.
Equation~\eqref{eq:gauge-membership} now gives $x\in aP$ and $y\in(n-a)Q$.

Apply IDP in $P$ to write $x=p_1+\cdots+p_a$ and in $Q$ to write $y=q_1+\cdots+q_{n-a}$, with all summands lattice
points of the indicated factors. When a number of summands is zero, the corresponding vector is zero and the sum is
empty. The $n$ points $(p_1,0),\ldots,(p_a,0),(0,q_1),\ldots,(0,q_{n-a})$
belong to $P\oplus Q$ and sum to $(x,y)$.

By definition, the origin is an interior point of any reflexive polytope.
This completes the proof.
\end{proof}

Hibi--Higashitani give an equivalence under additional hypotheses \cite[Theorem~1.1]{HH}. In the present coordinates their
statement assumes that both factors contain $0$ and are spanning, and that
\[ (P\oplus Q)\cap\Z^{p+q}=\bigl((P\cap\Z^p)\times\{0\}\bigr) \cup\bigl(\{0\}\times(Q\cap\Z^q)\bigr).
\]
Under these hypotheses the free sum is IDP if and only if both factors are IDP and at least one of $\ell_P,\ell_Q$ equals one.
The displayed condition excludes additional mixed lattice points of the free sum and is essential to that equivalence.
The sufficient statement in \cref{prop:free-idp} does not require this condition.

\begin{theorem}\label{ex:free-holes}
The free sum of two lattice polytopes does not preserve the very ampleness, IDP, unimodular triangulations, regular unimodular triangulations, or quadratic triangulations.
\end{theorem}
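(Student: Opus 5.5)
The plan is to give one explicit counterexample. Each factor will have a quadratic triangulation, the strongest of the listed properties, and the free sum will fail the weaker properties. By the chain of implications recalled in \cref{sec:definitions} (quadratic $\Rightarrow$ regular unimodular $\Rightarrow$ unimodular $\Rightarrow$ IDP $\Rightarrow$ very ample), it suffices to show two things: both factors admit quadratic triangulations, and $P\oplus Q$ is not very ample. As a fallback, showing that $P\oplus Q$ is not IDP already disposes of IDP and all three triangulation properties.

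Since \cref{prop:free-idp} forces IDP whenever $\ell_P=1$ or $\ell_Q=1$, both factors must have $\ell>1$. I would reuse the factors from the proof of \cref{ex:free-lc}, namely $P=Q=\conv((0,0),(2,1),(1,2))$ with the origin at a vertex. Here the facet $x+y\leq3$ gives $\ell_P=\ell_Q=3$. The factors are lattice polygons, and every lattice polygon has a regular unimodular flag triangulation; here one simply cones from the interior point $(1,1)$. So both factors have quadratic triangulations and are IDP and spanning.

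The free sum is the $4$-simplex $S$ computed there, with $h_S^*(z)=1+2z+5z^2+z^3$. From $h_1^*=2$, $S$ has exactly $7$ lattice points: the five vertices together with $(1,1,0,0)$ and $(0,0,1,1)$. This is exactly the mixed-point condition of Hibi--Higashitani \cite[Theorem~1.1]{HH}. Since both factors are spanning and IDP but $\ell_P,\ell_Q\neq1$, their theorem gives that $S$ is not IDP. To check this directly, I would exhibit an explicit point of $2S\cap\Z^4$ that is not a sum of two of the $7$ lattice points. The natural candidate is a height-$2$ point of the fundamental parallelepiped $\Pi_S$ whose gauge decomposition $\rho_P(x)+\rho_Q(y)=2$ has non-integral summands, for instance $\rho_P(x)=\rho_Q(y)=1$ realised with fractional parts $1/3,2/3$. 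By \eqref{eq:integer-lift}, such a point has no integer lift, and no splitting of it into lattice points of the factors exists.

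The main obstacle is very ampleness, which is strictly weaker than IDP. For this I would use the standard criterion: $S$ is very ample if and only if, for every vertex $v$, the semigroup generated by $\{u-v:u\in S\cap\Z^4\}$ is saturated in its cone. At the origin the tangent cone is a product of two planar cones, each generated by $(2,1),(1,1),(1,2)$, and these are saturated. So the defect must be sought at a vertex such as $(2,1,0,0)$. There the generators are few, namely the six differences with the other lattice points, and I would look for a lattice point of the tangent cone not in the semigroup they generate. One way is to compute the Hilbert basis of the cone spanned by these six vectors and check whether every element is among them; this is a finite computation and could be cross-checked with software as the paper does elsewhere.

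If every tangent cone of $S$ turns out to be saturated, I would replace the factors by ones with larger lattice distance. Candidates are the height-$\ell$ triangles $\conv(0,(\ell,1),(1,\ell))$ with $\ell\geq4$, or a factor of dimension $3$. For these, the corner cone at a vertex off the origin should have a non-unimodular two-dimensional face, hence a nonsaturated semigroup. I would then rerun the same two checks: quadratic triangulations of the factors, and a missing Hilbert basis element at one vertex of the free sum.
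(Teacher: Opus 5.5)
Your reduction is sound, and for $S=P\oplus P$ with $P=\conv((0,0),(2,1),(1,2))$ most of the plan goes through. $S$ has the seven lattice points you list, and $P$ is IDP, spanning, and has a regular unimodular triangulation. Your saturation test at the vertex $a=(2,1,0,0)$ does find a hole. Put $a'=(1,2,0,0)$, $b=(0,0,2,1)$, $b'=(0,0,1,2)$. Then the lattice point $(-3,-1,2,2)=\tfrac13(a'-a)+\tfrac23(b-a)+\tfrac23(b'-a)$ lies in the tangent cone, and correspondingly $(2n-3,n-1,2,2)\in nS$ for every $n\geq2$. The functional $x_1+x_2+x_3+x_4$ takes only the values $0,2,3$ on $S\cap\Z^4$ and equals $3n$ at this point. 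So each of the $n$ summands would have to be one of $a,a',b,b'$, which forces $x_3+x_4\equiv0\pmod 3$, whereas it equals $4$. Once this computation is written out, your example handles very ampleness, IDP, and unimodular and regular unimodular triangulations.

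The genuine gap is the quadratic case. Your claim that every lattice polygon has a regular unimodular flag triangulation is false, and it fails for exactly your triangle. A unimodular triangulation must use every lattice point, so the only unimodular triangulation of $P$ is the cone from $(1,1)$ over the boundary. In it the three boundary edges are present, but $\{(0,0),(2,1),(1,2)\}$ is not a face. That is a minimal non-face of cardinality three, so the triangulation is not flag. Equivalently, the toric ideal of $P$ is generated by the cubic coming from $(0,0)+(2,1)+(1,2)=3(1,1)$.

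This is why the Bruns--Gubeladze--Trung polygon theorem requires at least four boundary lattice points. It is also why the paper uses $\conv(\pm2e_1,\pm3e_2)$, whose four vertices are its boundary lattice points, together with $[-5,5]$. Since your inputs have no quadratic triangulation, your example says nothing about that property. You need factors that do have quadratic triangulations, such as polygons with at least four boundary lattice points or intervals, while keeping $\ell>1$ for both, since otherwise \cref{prop:free-idp} makes the free sum IDP. Then you must re-prove non-very-ampleness for the new free sum. The paper does this with the functional $15x+10y+6z$, which never takes the value $29$ on $R\cap\Z^3$.
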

\begin{proof}
Let
\[ P=\conv(\pm2e_1,\pm3e_2),\qquad Q=[-5,5].
\]
Its boundary has exactly four lattice points. The polygon theorem of Bruns--Gubeladze--Trung \cite[Corollary~3.2.5]{BGT}, also
\cite[Proposition~2.25]{HPPS}, states that a lattice polygon with at least four boundary lattice points has a quadratic triangulation. 
The interval $Q$ has the unit-segment triangulation. It is unimodular, is regular by lifting the
integer $j$ to height $j^2$, and is flag because a path has no clique of more than two vertices. 
Thus both polytopes have all six geometric properties in \cref{tab:coefficients}.

Their gauges are $|x|/2+|y|/3$ and $|z|/5$. Equation~\eqref{eq:free-gauge} therefore gives
\[ R=P\oplus Q=\{(x,y,z):15|x|+10|y|+6|z|\leq30\}.
\]
For every $n\geq2$, the integer point $u_n=(1,2,5n-6)$ lies in $nR$, since its weighted absolute-coordinate sum is $30n-1$.

Define a function $\lambda(x,y,z)=15x+10y+6z$. We first show that no lattice point of $R$ has
$\lambda(x,y,z)=29$. 
If a coordinate were negative, its contribution to the absolute-coordinate sum would exceed its contribution to $\lambda$ by at least $12$.
The absolute-coordinate sum would then be at least $29+12>30$. Thus all coordinates must be nonnegative.
The equation $15x+10y+6z=29$ implies $z\equiv4\pmod5$ and $0\leq z\leq4$, so $z=4$.
It would follow that $15x+10y=5$, which is impossible for nonnegative integers $x,y$.

For every $a\in R\cap\Z^3$, the integer $30-\lambda(a)$ is consequently either zero or at least two. 
If $u_n=(1,2,5n-6)=a_1+\cdots+a_n$ with $a_i\in R\cap\Z^3$, linearity would give
\[ 1=30n-\lambda(u_n)=\sum_{i=1}^n(30-\lambda(a_i)),
\]
which is impossible. Thus each dilation $nR$, $n\geq2$, contains a lattice point that is not a sum of $n$
lattice points of $R$. In particular, $R$ is not very ample and is not IDP. Since a unimodular triangulation
implies IDP, it has none of the three triangulation properties considered here.

The origin is interior, and $R$ contains the three standard basis vectors, so it is spanning.
\end{proof}

For comparison, the integer height of a join prevents this type of mixed allocation. The following consequence makes the distinction between IDP and very ampleness explicit.

\begin{theorem}\label{prop:join-eventual-idp}
For nonempty lattice polytopes $P,Q$, the following conditions are equivalent: (1). $P*Q$ is very ample;
(2). $P*Q$ is IDP; (3). both $P$ and $Q$ are IDP.
\end{theorem}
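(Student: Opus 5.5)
Since IDP implies very ample, (2)$\Rightarrow$(1) holds at once, and (3)$\Rightarrow$(2) is the join part of \cref{thm:join-geometry}. For completeness I would still sketch the direct argument behind that citation. Let $(x,y,k)\in n(P*Q)\cap\Z^{p+q+1}$. By the slice description $n(P*Q)\cap\{t=k\}=((n-k)P)\times(kQ)\times\{k\}$, we have $x\in(n-k)P\cap\Z^p$ and $y\in kQ\cap\Z^q$ with $k$ an integer. IDP of $P$ writes $x$ as a sum of $n-k$ lattice points of $P$, and IDP of $Q$ writes $y$ as a sum of $k$ lattice points of $Q$. The points $(p_i,0,0)$ and $(0,q_j,1)$ are then $n$ lattice points of $P*Q$ summing to $(x,y,k)$. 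The real content is therefore (1)$\Rightarrow$(3), which I would prove in contrapositive form: if, say, $P$ is not IDP, then $P*Q$ fails IDP in infinitely many dilations.

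Assume $P$ is not IDP, so there exist $m\geq2$ and $u\in mP\cap\Z^p$ that is not a sum of $m$ lattice points of $P$. Fix a lattice point $v_0\in Q$. For every $N\geq m$ I would consider the point
\[
w_N=\bigl(u,(N-m)v_0,N-m\bigr)\in\Z^{p+q+1}.
\]
By the slice formula at height $t=N-m$, this point lies in $N(P*Q)$, since $u\in mP=(N-(N-m))P$ and $(N-m)v_0\in(N-m)Q$.

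Suppose $w_N=a_1+\cdots+a_N$ with $a_i\in(P*Q)\cap\Z^{p+q+1}$. Each lattice point of $P*Q$ has last coordinate in $[0,1]\cap\Z=\{0,1\}$. At height $0$ it has the form $(p',0,0)$ with $p'\in P\cap\Z^p$, and at height $1$ the form $(0,q',1)$ with $q'\in Q\cap\Z^q$; both follow from the slice description with $n=1$. Because the last coordinates sum to $N-m$, exactly $m$ of the $a_i$ lie at height $0$. Their first blocks then sum to $u$, while the height-one summands contribute $0$ to the first block. This writes $u$ as a sum of $m$ lattice points of $P$, a contradiction. Hence $N(P*Q)$ fails the IDP condition for every $N\geq m$, so $P*Q$ is not very ample. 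The case where $Q$ is not IDP is symmetric: take the point $\bigl(Mp_0,u',M\bigr)$ with $p_0\in P\cap\Z^p$ fixed, where $u'\in m'Q\cap\Z^q$ is not a sum of $m'$ lattice points of $Q$, and $M=N-m'$.

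The main point to check carefully is the \emph{integrality of heights}, which is what distinguishes joins from free sums (compare \cref{ex:free-holes}). The last coordinate is a lattice functional taking only the values $0,1$ on $P*Q$, so in any decomposition the number of summands drawn from each factor is forced. This rigidity is what prevents the non-IDP witness from being absorbed as $N$ grows. I would also verify that the slice identity holds for real $t$ and hence at integer $t=k$. This makes the lattice points at a given height exactly the products of lattice points of the corresponding dilates, so no mixed lattice points occur.
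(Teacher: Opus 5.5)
Your proof is the paper's own: the paper also cites the join theorem for (3)$\Rightarrow$(2). For (1)$\Rightarrow$(3) it uses the same witness $(u,(N-m)v_0,N-m)$, and the fact that heights are $0$ or $1$ forces exactly $m$ summands from the $P$ layer.

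There is one slip in the symmetric case. The witness must sit at height $m'$, namely $((N-m')p_0,u',m')$, not at height $M=N-m'$. As you wrote it, $u'\in m'Q$ need not lie in $(N-m')Q$, so the point need not be in $N(P*Q)$. Even if it were, the height count would force $N-m'$ summands from $Q$ rather than $m'$, which gives no contradiction.
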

\begin{proof}
If both $P$ and $Q$ are IDP, then the join is IDP by \cite[Theorem~5.2]{LTX}.
We know that an IDP polytope is very ample.

For the remaining implication: $(1)\Rightarrow(3)$, suppose that $P$ is not IDP. Fix $c\geq2$ and $u\in cP\cap\Z^p$ that is
not a sum of $c$ lattice points of $P$, and fix a lattice point $q_0\in Q$. 
For every $n\geq c$, the slice formula gives
\[(u,(n-c)q_0,n-c)\in n(P*Q)\cap\Z^{p+q+1}.
\]
Each lattice point of $P*Q$ has height zero or one. A decomposition of the displayed point into $n$
such points would therefore contain exactly $n-c$ points from the $Q$ layer and $c$ points from the
$P$ layer. In its first coordinate block it would express $u$ as a sum of $c$ lattice points of $P$, a contradiction. 
Consequently, such undecomposable lattice points exist in every dilation $n(P*Q)$ for $n \ge c$, proving that the join $P*Q$ is not very ample. By symmetry, an analogous argument applies if $Q$ is not IDP.
\end{proof}

For \cref{prop:join-eventual-idp}, it uses no condition on the origins. For free sums, by contrast, the allocation of a point between the factors is governed by the real radial levels in \eqref{eq:free-gauge}.

\section{Minkowski sums}\label{sec:minkowski}

Let $P,Q\subset\R^d$ be nonempty lattice polytopes. Their
\emph{Minkowski sum} is
\[
 P+Q=\{x+y:x\in P,\ y\in Q\}.
\]
The ambient lattice is $\Z^d$ for $P$ and $Q$, and for the sum. For a lower-dimensional polytope, spanning and normalized volume are understood in the integer affine coordinates specified in \cref{sec:definitions}. 
We denote the linear span by 
\[ U=\lin(P-P)=\{x - y : x \in P, y \in P\},\qquad V=\lin(Q-Q)=\{x - y : x \in Q, y \in Q\}.
\]
If $P=\conv(p_1,\ldots,p_s)$ and $Q=\conv(q_1,\ldots,q_t)$, then
$P+Q=\conv\{p_i+q_j:1\leq i\leq s,\ 1\leq j\leq t\}$.
Indeed, the sum of convex combinations with coefficients $\lambda_i$
and $\mu_j$ is the convex combination of the $p_i+q_j$ with coefficients
$\lambda_i\mu_j$. The reverse inclusion follows from convexity.
Consequently, $P+Q$ is a lattice polytope. Moreover,
\[
 (P+Q)-(P+Q)=(P-P)+(Q-Q),\qquad
 \dim(P+Q)=\dim(U+V).
\]
In particular, if $P$ and $Q$ are full-dimensional in $\R^d$, the $P+Q$ has dimension $d$.

\subsection{Ehrhart positivity and magic positivity}

McMullen's multivariate Ehrhart theorem states that, for fixed lattice polytopes $P_1,\ldots,P_m\subset\R^d$, the function
\[ (n_1,\ldots,n_m)\longmapsto \bigl|(n_1P_1+\cdots+n_mP_m)\cap\Z^d\bigr|
\]
is a polynomial in $n_1,\ldots,n_m$ on $\Z_{\geq0}^m$; see \cite{McMullen,BR}.
Its diagonal specialization is $L_{P_1+\cdots+P_m}(n)$.
Once this polynomial is known, applying \eqref{eq:conversion} with dimension $\dim(P_1+\cdots+P_m)$ yields the $h^*$-polynomial. Furthermore, the individual Ehrhart polynomials of the summands generally fail to determine these mixed counts.

For example, for every integer $a\geq1$, the two segments $[(0,0),(1,0)]$ and $[(0,0),(1,a)]$ are primitive and both have Ehrhart
polynomial $n+1$. Their Minkowski sum is a lattice parallelogram of area $a$ with four primitive edges. Pick's theorem \cite[Theorem 2.8]{BR} therefore gives
\[ L(n)=an^2+2n+1, \qquad h^*(z)=1+az+(a-1)z^2.
\]
The second formula follows by substituting the first into \eqref{eq:conversion}. As $a$ varies, the Ehrhart polynomials $n+1$
remain fixed and the polynomial $L(n)$ changes.

Tsuchiya constructed Minkowski sums of Ehrhart positive polytopes whose Ehrhart polynomials have negative coefficients; two constructions are recorded in \cite[Examples~4.6.1--4.6.2]{Liu}. The next example modifies the first construction so that the Ehrhart polynomials of $P$ and $Q$ are magic positive.

\begin{theorem}\label{ex:minkowski-magic}
The Minkowski sum of two lattice polytopes does not preserve Ehrhart positivity or magic positivity.
\end{theorem}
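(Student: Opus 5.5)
The plan is to give an explicit counterexample in the spirit of Tsuchiya's first construction in \cite[Example~4.6.1]{Liu}. I will modify it so that both summands are not merely Ehrhart positive but \emph{magic} positive. Since magic positivity implies Ehrhart positivity, one example whose Minkowski sum has a negative ordinary Ehrhart coefficient settles both claims at once.

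\textbf{Choice of summands.} I would use lower-dimensional summands, which the conventions of \cref{sec:minkowski} allow. Their Ehrhart polynomials are computed in integer affine coordinates, so they are easy to control.
\begin{itemize}
\item The first candidate is the triangle $T=\conv(0,2e_1,e_2)$, or a unimodular image of it placed in a tilted $2$-plane. It has normalized area $2$, three boundary edges carrying $4$ boundary lattice points, and no interior points. Pick's theorem gives $L_T(n)=n^2+2n+1=(n+1)^2$, which is magic positive.
\item The second summand would be a unit cube $[0,1]^k$ or a primitive segment, whose Ehrhart polynomials are $(n+1)^k$ and $n+1$. These are trivially magic positive.
\end{itemize}
Zonotopes are Ehrhart positive, so the non-zonotopal triangle is essential. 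The construction parameter is an integer $m$ that tilts one summand out of the span of the other, for instance by a vertex of the form $(\dots,m)$. The point of the tilt is that the summands' own Ehrhart polynomials stay fixed while the mixed lattice-point counts change, exactly as in the parallelogram example preceding the statement.

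\textbf{Computing $L_{P+Q}$.} By McMullen's theorem, $L_{P+Q}(n)$ is the diagonal of the bivariate polynomial $(n_1,n_2)\mapsto|(n_1P+n_2Q)\cap\Z^d|$. I would compute it by slicing $n(P+Q)$ along the coordinate transverse to the span of one summand. This mirrors the slice computations used for $T\oplus I$ and $B_d$ in \cref{sec:free}. Each slice is a dilate or a translate of a lattice polygon, so it can be counted by Pick's theorem. Summing over the slices gives $L_{P+Q}(n)$ as a polynomial in $n$ and $m$. For large $m$ the leading term $\Vol/d!$ grows linearly in $m$. A lower-order coefficient, typically the linear one, picks up a negative contribution proportional to $m$, as happens for Reeve tetrahedra and for the join example of \cite{LTX}.

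\textbf{Main obstacle and fallback.} The hard step is to pick the dimension, the tilt, and $m$ so that a coefficient is provably negative while the summands stay magic positive. In low dimension the slices may produce only nonnegative coefficients, so I expect to need the second summand to be a cube of moderate dimension. This is analogous to the size of $B_{10}$ and to $[0,7]^4$ in \cref{ex:pyramid-negative}. My first attempt would be an asymptotic expansion in $m$ that isolates the sign of the $m$-linear part of each $c_i$. I would then fix a concrete small $m$, write out $L_{P+Q}(n)$ explicitly, and verify it independently with LattE~\cite{DeLoera04}. Once a negative $c_i$ is displayed, both non-preservation statements follow.
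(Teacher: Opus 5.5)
Your plan has the same overall shape as the paper's argument: a lower-dimensional summand plus a transverse lattice segment, counted by slicing along the transverse coordinate. However, it never produces the counterexample, and the family you propose cannot produce one.

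Normalize so that $T=\conv(0,2e_1,e_2)\subset\R^2\times 0$ and $Q=\{\sum_i\mu_iw_i:\mu\in[0,1]^k\}$, with $w_i=(a_i,b_i,u_i)$ and $U=(u_1,\dots,u_k)$ invertible. This covers every tilted segment or unit cube transverse to $T$. Over $z=U\mu\in\Z^k$, the slice of $n(T+Q)$ is $nT+A\mu$, where $A$ has columns $(a_i,b_i)$. Its lattice count $N_c(n)$ depends only on the class $c$ of $z$ modulo $U\Z^k$. That class occurs at $n^{a_c}(n+1)^{k-a_c}$ heights, where $a_c$ is the number of non-integral entries of $\mu$. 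Let $g_1,g_2\in[0,1)$ be the round-up gaps of the fractional part of $A\mu$, and set $\gamma_c=\lceil g_1+2g_2\rceil$. Then $N_c(n)$ equals $(n+1)^2$, $n(n+1)$, $n^2$, or $n(n-1)$ for $\gamma_c=0,1,2,3$.

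Only $n(n-1)$ has a negative magic coefficient. But $\gamma_c=3$ forces $g_1>0$ and $g_2>1/2$. The class $-c$ then has gaps $1-g_1,1-g_2$, so $\gamma_{-c}=1$, and it has the same $a_c$. The two classes together contribute
\[n^{a_c}(n+1)^{k-a_c}\bigl(n(n-1)+n(n+1)\bigr)=2n^{a_c+2}(n+1)^{k-a_c},\]
and every other class contributes a single magic basis element. Hence $L_{T+Q}$ is magic positive, and so Ehrhart positive, for every tilt, every $m$, and every $k$. Neither an asymptotic analysis in $m$ nor a larger cube can rescue this family.

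The missing idea is that the deficit must come from a summand of dimension at least three. There, a deep translate loses lattice points in a way the opposite residue class does not compensate. A translate of $3n\Delta_3$ whose round-up gaps sum to a number in $(1,2)$ has $\binom{3n+1}{3}=\frac92n^3-\frac12n$ lattice points, with a negative linear term. If the gaps of both $c$ and $-c$ lie in this range, the pair keeps that negative term.

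The paper takes $P=3\conv(0,e_1,e_2,e_3)$ and $Q_m=[0,(1,m-1,m-1,m)]$ in $\R^4$. At height $m\ell+j$ the fractional parts of the translation are $(j/m,1-j/m,1-j/m)$. So every non-integral height, for $j$ and $m-j$ alike, gives exactly $\binom{3n+1}{3}$ points, whence
\[L_{P+Q_m}(n)=(n+1)\binom{3n+3}{3}+n(m-1)\binom{3n+1}{3}.\]
Its $n^2$-coefficient is $15-m/2<0$ at $m=31$. Meanwhile $L_P=(n+1)^3+\frac52n(n+1)^2+n^2(n+1)$ and $L_{Q_m}=n+1$ are magic positive. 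Note that the negative coefficient appears at $n^2$, not at the linear term you anticipated. Without a concrete pair of this kind and a verified negative coefficient, your proposal does not prove the statement.
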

\begin{proof}
Let $e_1,\ldots,e_4$ be the standard basis of $\Z^4$. For an integer $m\geq2$, set
\[ P=3\conv(0,e_1,e_2,e_3),\qquad Q_m=[0,(1,m-1,m-1,m)].
\]
The dimensions of $P$, $Q_m$, and $P+Q_m$ are three, one, and four, respectively. 
The last assertion follows because the direction of $Q_m$ has nonzero fourth coordinate. 
The segment is primitive, since the first coordinate of its direction is one. The Ehrhart polynomials are 
\[ L_P(n)=\binom{3n+3}{3} =(n+1)^3+\frac{5}{2}n(n+1)^2+n^2(n+1),\qquad 
 L_{Q_m}(n)=n+1.
\]
These are magic expansions with nonnegative coefficients.

We compute $L_{P+Q_m}(n)$ for $n\geq1$ by fixing the fourth coordinate $z\in\{0,1,\ldots,mn\}$. The corresponding slice, in its first three coordinates, is
\[ \left(\frac zm,\frac{(m-1)z}{m},\frac{(m-1)z}{m}\right) +3n\Delta_3,
 \qquad \Delta_3=\conv(0,e_1,e_2,e_3)\subset\R^3.
\]
For each of the $n+1$ heights where $z$ is a multiple of $m$, the translation vector is integral. 
Consequently, the slice contains exactly $\binom{3n+3}{3}$ lattice points.

Every remaining height is uniquely $z=m\ell+j$, with $0\leq\ell\leq n-1$ and $1\leq j\leq m-1$. The fractional parts of
the translation vector $\left(\frac zm,\frac{(m-1)z}{m},\frac{(m-1)z}{m}\right)$ are
$f_1=\frac jm$, $f_2=1-\frac jm$, and $f_3=1-\frac{j}{m}$, respectively.
Now we need to count the number of integer lattice points \((y_1,y_2,y_3)\) satisfying the following conditions:
\begin{align*}
y_1 - f_1 \ge 0,\quad
y_2 - f_2 \ge 0,\quad
y_3 - f_3 \ge 0,\quad
(y_1 - f_1) + (y_2 - f_2) + (y_3 - f_3) \le 3n.
\end{align*}
Consider the condition \(y_1 \ge f_1\). Since \(y_1\) must be an integer and \(0<f_1 = \frac{j}{m}<1\), the smallest possible value of \(y_1\) is in fact 1.
Similarly, \(y_2 \ge 1 - \frac{j}{m}\) implies that the minimum value of \(y_2\) is \(1\); \(y_3 \ge 1 - \frac{j}{m}\) implies that the minimum value of \(y_3\) is \(1\).

Set $k_i=y_i-1\geq 0$. Therefore, we have 
$$(k_1 + 1 - f_1) + (k_2 + 1 - f_2) + (k_3 + 1 - f_3) \le 3n.$$
Furthermore, we obtain 
$$k_1 + k_2 + k_3 \le 3n - \left(1 + \frac{j}{m}\right).$$
We have reduced the problem to finding three nonnegative integers \(k_1,k_2,k_3\) whose sum is at most $\lfloor 3n - 1 - \frac{j}{m} \rfloor = 3n - 2$.
The number of such triples is $\binom{3n-2+3}{3}$. There are $n(m-1)$ heights of this type. 

Thus we have 
\[\begin{aligned}
 L_{P+Q_m}(n)&=(n+1)\binom{3n+3}{3}+n(m-1)\binom{3n+1}{3}\\
 &=\frac{9m}{2}n^4+\frac{27}{2}n^3+\left(15-\frac m2\right)n^2+\frac{13}{2}n+1.
 \end{aligned}
\]
The formula also gives $L_{P+Q_m}(0)=1$. For $m=31$, the coefficient of $n^2$ is $-1/2$. 
Consequently, Minkowski sums preserve neither Ehrhart positivity nor magic positivity, even when polytopes $P$ and $Q_m$ are magic positive.
\end{proof}

Failure of Ehrhart positivity also occurs when both polytopes are full-dimensional in the same space. 

\begin{example}(Tsuchiya; \cite[Example~4.6.2]{Liu})
In $\Z^5$, set
\[ \begin{aligned}
 P_0&=\conv\bigl(0,e_1,(0,1,5,15,16),e_3,e_4,e_5\bigr),\\
 Q_0&=\conv\bigl(0,e_1,(1,1,15,15,16),e_3,e_2,e_5\bigr).
 \end{aligned}
\]
Both are five-dimensional simplices, and $P_0$ is unimodular.
Their Ehrhart polynomials are given by 
\begin{align*}
L_{P_0}(n)&=\binom{n+5}{5},\quad L_{Q_0}(n)=\frac18n^5+\frac5{12}n^4+\frac{17}{24}n^3+\frac{19}{12}n^2+\frac{13}{6}n+1,\\
L_{P_0+Q_0}(n)&=\frac{3007}{40}n^5+\frac{359}{24}n^4-\frac{255}{24}n^3+\frac{193}{24}n^2+\frac{89}{20}n+1.
\end{align*}
The $P_0$ and $Q_0$ are Ehrhart positive, whereas the cubic coefficient of $P_0+Q_0$ is negative.
\end{example}

\subsection{Coefficient properties and geometric properties}

\begin{theorem}
The Minkowski sum of two lattice polytopes does not preserve the symmetry of the $h^*$-polynomial.
\end{theorem}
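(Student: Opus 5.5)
The claim only needs one explicit counterexample: two lattice polytopes with symmetric $h^*$-polynomials whose Minkowski sum does not have one. The example already in this section, just before \cref{ex:minkowski-magic}, almost does this. The segments $[(0,0),(1,0)]$ and $[(0,0),(1,a)]$ are primitive, so each has $h^*(z)=1$, which is trivially symmetric (degree $0$). Their Minkowski sum is a parallelogram $\Pi_a$ with area $a$ and four primitive edges. By Pick's theorem it satisfies $L(n)=an^2+2n+1$, and the conversion formula \eqref{eq:conversion} gives
\[h^*_{\Pi_a}(z)=1+az+(a-1)z^2.\]
I would take $a=3$, which gives $1+3z+2z^2$. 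This has degree $2$ and is not symmetric, since $h_0^*=1\neq 2=h_2^*$. (The case $a=2$ gives $1+2z+z^2$, which is symmetric, so it must be avoided.)

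In the proof I would rederive the $h^*$-polynomial directly for $a=3$ as a check. The boundary has $4$ lattice points, so Pick gives $3=i+4/2-1$, hence $i=2$ interior points. Therefore $h_1^*=|P\cap\Z^2|-3=6-3=3$, $h_2^*=i=2$, and the coefficients sum to $\Vol=2\cdot 3=6$, which is consistent. Since reflexive polygons have a unique interior point, this also shows directly that $\Pi_3$ is not Gorenstein, in line with Hibi's criterion.

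The only possible objection is that the summands are lower-dimensional. The paper's conventions in this section explicitly allow that, with $h^*$ computed in integer affine coordinates. For robustness I would also give a full-dimensional variant, taking $P=[0,1]^2$ (with $h^*=1+z$, symmetric) and $Q=\conv((0,0),(1,0),(0,1))$ (with $h^*=1$). Their sum is a pentagon with $|P+Q\cap\Z^2|=8$ and $0$ interior points, so $h^*=1+5z$, which is not symmetric. If that count is off I would fall back on the segment example. The main step to get right is simply choosing parameters that break symmetry, and checking the lattice point counts carefully; none of this is deep.
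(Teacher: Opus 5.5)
Your main argument is correct. The primitive segments $[(0,0),(1,0)]$ and $[(0,0),(1,3)]$ each have $h^*=1$. Their sum $\Pi_3$ has $6$ lattice points, $2$ of them interior, so $h^*_{\Pi_3}(z)=1+3z+2z^2$, which is not symmetric.

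The paper uses a different counterexample: the one-dimensional sum $[0,2]+[0,2]=[0,4]$. There the inputs have $h^*=1+z$ and the output has $h^*=1+3z$. The paper's example is full-dimensional, so it needs no appeal to integer affine coordinates on lower-dimensional summands, and it is a one-line computation. Yours shows that symmetry fails even from unimodular inputs, whose $h^*=1$ is trivially symmetric, and it reuses the parallelogram family already computed in the section. It does, however, rely on the convention that Minkowski summands may be lower-dimensional.

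Your full-dimensional backup is wrong and should be dropped. The sum $[0,1]^2+\Delta_2=\conv((0,0),(2,0),(2,1),(1,2),(0,2))$ has area $7/2$, seven boundary lattice points, and the interior lattice point $(1,1)$. Its $h^*$-polynomial is therefore $1+5z+z^2$, which is symmetric; the pentagon is reflexive after translating by $-(1,1)$. Your value $1+5z$ is inconsistent with the normalized volume $7$, since its coefficients sum to $6$.

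If you want a full-dimensional example with unimodular inputs, take $\Delta_2+\Delta_2=2\Delta_2$. Its Ehrhart polynomial is $2n^2+3n+1$, so its $h^*$-polynomial is $1+3z$, which is not symmetric. Alternatively, simply use the paper's $[0,2]+[0,2]$.
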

\begin{proof}
The interval $[0,2]$ has Ehrhart polynomial $2n+1$ and $h^*$-polynomial $1+z$, whereas the Minkowski sum $[0,2]+[0,2]=[0,4]$ has Ehrhart polynomial $4n+1$ and $h^*$-polynomial $1+3z$. The $h^*$-polynomial is not symmetric.
\end{proof}

\begin{theorem}\label{ex:minkowski-shape}
The Minkowski sum of two lattice polytopes does not preserve the unimodality, log-concavity, ultra log-concavity, real-rootedness, or $\gamma$-positivity of the $h^*$-polynomial; nor does it preserve spanning, very ampleness, IDP, unimodular triangulations, regular unimodular triangulations, or quadratic triangulations.
\end{theorem}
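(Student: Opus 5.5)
The plan is to refute every listed property with explicit counterexamples whose summands are as simple as possible. I would take low-dimensional unimodular simplices (segments, unimodular triangles) as summands. Their $h^*$-polynomial is $1$, so they are trivially symmetric, $\gamma$-positive, real-rooted, ultra log-concave, unimodal and log-concave. They are also spanning, IDP and very ample, and the simplex itself is a quadratic (regular, unimodular, flag) triangulation. Every property in the statement therefore holds for such inputs, and a single output failing a property suffices.

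For $\gamma$-positivity the parallelogram example just before \cref{ex:minkowski-magic} already works. The primitive segments $[(0,0),(1,0)]$ and $[(0,0),(1,a)]$ have Minkowski sum with $h^*(z)=1+az+(a-1)z^2$, which is not symmetric for $a\geq3$, hence not $\gamma$-positive. This example cannot settle the remaining coefficient properties: $1+az+(a-1)z^2$ is real-rooted, and lattice polygons are always IDP. So a genuinely three-dimensional (or higher) sum is needed.

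For the rest I would aim for a Reeve-type sum: take $P$ a unimodular simplex in a coordinate subspace and $Q$ a unimodular simplex or primitive segment whose direction has a large last coordinate $m$. Concretely, first try $P=[0,e_1]$ and $Q$ a unimodular triangle such as $\conv(0,e_2,(1,1,m))$, or $P=\conv(0,e_1,e_2)$ with $Q=[0,(a,b,m)]$. I would count $L_{P+Q}(n)$ by fixing the last coordinate, exactly as in the proof of \cref{ex:minkowski-magic}. Each slice is a translate of a dilated simplex by a vector with fractional parts $(az/m,bz/m)$ or similar, and its lattice-point count depends only on these fractional parts. Summing the slices gives a closed form for $L_{P+Q}(n)$, and \eqref{eq:conversion} then yields $h^*$. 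The parameters $a,b,m$ are to be chosen so that most intermediate slices contain few or no lattice points in the first dilate but many in higher dilates. This produces an $h^*$-vector of the shape $1+h_1z+\cdots$ with a tiny $h_1$ and a large top coefficient, or one with internal zeros. Such a vector violates unimodality, hence log-concavity, ULC and real-rootedness by the implications recalled in the introduction.

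For the geometric properties I would use the same family. If the lattice points of $P+Q$ in the first dilate lie in a proper affine sublattice, for instance only at last coordinates $0$ and $m$, then $P+Q$ is not spanning, and a fortiori not very ample, not IDP, and has no unimodular, regular unimodular or quadratic triangulation. If spanning cannot be destroyed this way, I would instead exhibit, for every $n\geq2$, a point of $n(P+Q)$ that is not a sum of $n$ lattice points, by the linear-functional parity argument of \cref{ex:free-holes}. That argument would give non-very-ampleness and all weaker failures, with a separate example needed only for spanning.

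The main obstacle is the parameter choice. In the first attempts the fractional-part conditions tend to force lattice points into every slice (e.g.\ $\conv(0,e_2,(1,1,m))+[0,e_1]$ contains $(1,1,k)$ for all $k$), so the sum is spanning. I would first search small $m$ with asymmetric directions $(a,b,m)$ and with summand $-\Delta_2$ versus $\Delta_2$, verifying candidates with LattE as the paper does elsewhere. If no two-simplex choice works, I would let one summand be a higher-dimensional unimodular simplex so that the sum contains an empty simplex of volume $m$.
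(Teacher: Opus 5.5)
\textbf{The gap.} Your $\gamma$-positivity argument is fine: $1+az+(a-1)z^2$ is not symmetric for $a\geq3$, and symmetry is part of $\gamma$-positivity here. For unimodality, log-concavity, $\ULC$, real-rootedness and all six geometric properties, however, you give only a search plan. Since the statement is existential, the explicit counterexample is the whole proof.

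Moreover, the three-dimensional search space you prioritize provably cannot produce the key failures.
\begin{enumerate}
\item \emph{Unimodality.} Let $P+Q\subset\R^3$ be three-dimensional with neither summand a point. Choose lattice points with $p_1-p_0$ not parallel to $q_1-q_0$. The four points $p_a+q_b$ are distinct, coplanar lattice points, so $P+Q$ has at least five lattice points and $h_1^*\geq1$. For every lattice $3$-polytope, $h_3^*\leq h_1^*$, which just says the boundary has at least four lattice points. Hibi's lower bound theorem gives $h_1^*\leq h_2^*$ whenever $h_3^*>0$. Hence $h^*_{P+Q}$ is always unimodal with no internal zeros. Neither a ``tiny $h_1^*$ with large top coefficient'' nor an internal zero can occur. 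At best such sums refute real-rootedness and $\ULC$; for example, $\conv(0,e_1,e_2)+[0,(1,1,3)]$ has $h^*=1+3z+4z^2+z^3$.
\item \emph{Spanning.} Your observation that the first attempts come out spanning is not an accident. Write the primitive segment as $s=pu+qv+Dw$, where $u,v$ are the edge vectors of the unimodular triangle and $\{u,v,w\}$ is a basis of $\Z^3$. Lattice points of $\{\alpha u+\beta v+\gamma s\}$ have $\gamma\in D^{-1}\Z$, and level $\gamma=j/D$ is occupied iff $\{-jp/D\}+\{-jq/D\}\leq1$. Levels $j$ and $D-j$ have sums $\sigma$ and $2-\sigma$, or else a fractional part vanishes. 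So level $1$ or $D-1$ is occupied, and the lattice points generate $\Z^3$.
\item \emph{Fallback.} The ``empty simplex of volume $m$'' idea does not control $h^*_{P+Q}$ at all.
\end{enumerate}

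\textbf{The missing idea.} You need a whole nontrivial coset of the ambient lattice that misses $P+Q$ and enters $n(P+Q)$ only from $n=2$ on. Your own family achieves this once the simplex is four-dimensional. Take $P=\conv(0,e_1,\ldots,e_4)\subset\R^5$ and $Q=[0,(1,1,1,1,2)]$. The slice of $n(P+Q)$ at last coordinate $z$ has $\binom{n+4}{4}$ lattice points for even $z$. For $z=2\ell+1$, substituting $y_i=\ell+1+k_i$ gives $\binom{n+2}{4}$ lattice points. Hence
\[
L_{P+Q}(n)=(n+1)\binom{n+4}{4}+n\binom{n+2}{4},\qquad h^*_{P+Q}(z)=1+4z+2z^2+3z^3 .
\]
This polynomial is neither unimodal nor symmetric, so it also fails log-concavity, $\ULC$, real-rootedness and $\gamma$-positivity. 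The odd levels are empty for $n=1$, so every lattice point of $P+Q$ has even last coordinate. Thus $P+Q$ is not spanning, and all six geometric properties fail.

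The paper uses the same mechanism with two unimodular $4$-simplices in complementary coordinate blocks of $\R^8$. A linear map identifies $P+Q$ with $\Delta_4\times\Delta_4$ in the lattice $\Z^8\cup(\Z+\frac12)^8$. This gives $L_{P+Q}(n)=\binom{n+4}{4}^2+\binom{n+2}{4}^2$ and $h^*_{P+Q}=(1+z^2)(1+16z+36z^2+16z^3+z^4)$, which is symmetric but has the dip $37>32<37$. The half-integer coset is empty at $n=1$, which destroys spanning.
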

\begin{proof}
In $\R^8$ with lattice $\Z^8$, set
\[ P=\conv(0,e_1,e_2,e_3,e_4),\qquad
 Q=\conv(0,e_5,e_6,e_7,2e_8-e_1-\cdots-e_7).
\]
We will show that both polytopes are unimodular four-simplices, while the Minkowski sum is an eight-dimensional nonspanning polytope with a nonunimodal $h^*$-polynomial.

Consider the invertible linear map
\[ T(x_1,\ldots,x_8) =\left(x_1+\frac{x_8}{2},\ldots,x_7+\frac{x_8}{2},\frac{x_8}{2}\right).
\]
Its image of $\Z^8$ is the lattice
\[ T(\Z^8)=\Z^8\ \cup\ \left(\Z+\frac12\right)^8.
\]
Indeed, the parity of $x_8$ determines the two cases. Conversely, a vector $y$ in either displayed set has the integer preimage
$x_i=y_i-y_8$ for $1\leq i\leq7$ and $x_8=2y_8$.
Thus $T$ must be used together with this image lattice when counting points.

The images $T(P)$ and $T(Q)$ are the standard four-simplices in the first and last four coordinate positions, respectively. 
The half-integer coset has no point on either coordinate subspace: all four coordinates in the other block would have to vanish. Consequently, the image lattice on each subspace is exactly its usual copy of $\Z^4$. This proves that both polytopes are unimodular in their integer affine coordinates.  Each has $h^*=1$, and its triangulation consisting of the simplex and all its faces is quadratic: the simplex is unimodular, constant lifting heights give regularity, and every subset of its vertices is a face, which gives the flag property.

As a real polytope, $T(P+Q)=\Delta_4\times\Delta_4$, where $\Delta_4=\conv(0,e_1,e_2,e_3,e_4)\subset\R^4$. 
The two independent four-dimensional direction spaces also show that $\dim(P+Q)=8$.
In the integer coset, the $n$-th dilate contains $\binom{n+4}{4}^2$ lattice points. 
In the half-integer coset write $y_i=k_i+1/2$ for $1\leq i\leq8$. 
Nonnegativity is equivalent to $k_i\geq0$, and each four-coordinate sum is at most $n$ precisely when
the corresponding sum of the $k_i$ is at most $n-2$. It follows that
\[L_{P+Q}(n)=\binom{n+4}{4}^2+\binom{n+2}{4}^2.
\]
The second binomial term is zero for $n=0,1$.

Therefore, we have 
\[h_{P+Q}^*(z)=1+16z+37z^2+32z^3+37z^4+16z^5+z^6.
\]
The inequalities $37>32<37$ show that it is not unimodal; in particular, $32^2<37^2$ also violates log-concavity. 
Furthermore, $h_{P+Q}^*(z)$ is neither ultra log-concave nor real-rooted.
The polynomial is symmetric, but it cannot be $\gamma$-positive because $\gamma$-positivity implies unimodality. Thus all five properties fail to be preserved, although $h_P^*(z)=h_Q^*(z)=1$.

At $n=1$ the half-integer coset contributes no point. Therefore every lattice point of $P+Q$ has even eighth coordinate in the original $x$-coordinates. On the other hand, the sum $P+Q$ contains
\[0,e_1,\ldots,e_7,2e_8-e_1-\cdots-e_7.
\]
These points generate $\Z^7\times2\Z$. The lattice-point difference
group of $P+Q$ is therefore exactly $\Z^7\times2\Z$, which has index
two in $\Z^8$. The sum is not spanning. Under the conventions of
\cref{sec:definitions}, it is consequently not very ample or IDP and
admits no unimodular, regular unimodular, or quadratic triangulation.
This completes the proof.
\end{proof}

\subsection{A sufficient condition for spanning}

In particular, if one summand is full-dimensional and spanning, the Minkowski sum is spanning for any lattice polytope, as it necessarily contains an integral translate of the full-dimensional summand. However, for lower-dimensional spanning summands, the equality in \eqref{eq:saturated-sum} may fail. \cref{Memm-Spanning} gives a sufficient condition for a Minkowski sum; it does not assert necessity.

\begin{proposition}\label{Memm-Spanning}
Suppose that $P$ and $Q$ are spanning in their respective integer affine coordinates. If
\begin{equation}\label{eq:saturated-sum}
(\Z^d\cap U)+(\Z^d\cap V)=\Z^d\cap(U+V),
\end{equation}
then $P+Q$ is spanning.
\end{proposition}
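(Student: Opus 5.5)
We fix base points $p_0\in P\cap\Z^d$ and $q_0\in Q\cap\Z^d$ and use $p_0+q_0\in(P+Q)\cap\Z^d$ as the base point for the sum. Spanning in integer affine coordinates has a concrete meaning for $P$. The affine span $\aff(P)=p_0+U$ carries the lattice $\Z^d\cap U$ (translated by $p_0$). Spanning of $P$ therefore says that the differences $u-p_0$, $u\in P\cap\Z^d$, generate the group $\Z^d\cap U$. Likewise the differences $w-q_0$, $w\in Q\cap\Z^d$, generate $\Z^d\cap V$. We first record this reformulation explicitly, since it is the step where the lower-dimensional convention of \cref{sec:definitions} enters.

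Next we exhibit enough lattice points of $P+Q$. For $u\in P\cap\Z^d$, the point $u+q_0$ is a lattice point of $P+Q$, and its difference from the base point is $u-p_0$. Similarly, for $w\in Q\cap\Z^d$ the point $p_0+w$ lies in $(P+Q)\cap\Z^d$ with difference $w-q_0$. Hence the group $G$ generated by the differences of lattice points of $P+Q$ from $p_0+q_0$ contains both $\Z^d\cap U$ and $\Z^d\cap V$. It therefore contains their sum, which equals $\Z^d\cap(U+V)$ by hypothesis \eqref{eq:saturated-sum}.

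For the reverse inclusion, observe that $(P+Q)-(P+Q)=(P-P)+(Q-Q)$ spans $U+V$. Consequently every lattice point difference lies in $\Z^d\cap(U+V)$, and $G=\Z^d\cap(U+V)$. By the dimension formula $\dim(P+Q)=\dim(U+V)$ recorded above, $\Z^d\cap(U+V)$ is precisely the lattice used for integer affine coordinates on $\aff(P+Q)=p_0+q_0+U+V$. Thus $P+Q$ is spanning.

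The argument is short. The only point requiring care is the identification of spanning with generation of the saturated lattices $\Z^d\cap U$, $\Z^d\cap V$, $\Z^d\cap(U+V)$ under the integer affine coordinate convention. This is exactly where \eqref{eq:saturated-sum} is needed: without it, the sum of the two saturated lattices could be a proper finite-index subgroup of $\Z^d\cap(U+V)$, as in \cref{ex:minkowski-shape}.
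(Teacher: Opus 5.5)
Your proposal is correct and follows essentially the same route as the paper. Both use the translates $P+q_0$ and $p_0+Q$ inside $P+Q$ to place $(\Z^d\cap U)+(\Z^d\cap V)$ in the lattice-point difference group, then use the inclusion of every difference in $\Z^d\cap(U+V)$ together with the hypothesis to conclude equality.
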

\begin{proof}
Choose $p_0\in P\cap\Z^d$ and $q_0\in Q\cap\Z^d$. Both $P+q_0$ and $p_0+Q$ are contained in $P+Q$. 
By the spanning assumptions, differences of the lattice points in the $P+q_0$ generate $\Z^d\cap U$, and
differences in the $p_0+Q$ generate $\Z^d\cap V$. Hence differences of lattice points of $P+Q$ generate a subgroup containing the left-hand side of \eqref{eq:saturated-sum}. Every such difference belongs to $\Z^d\cap(U+V)$.
The assumed equality identifies these two groups and proves that $P+Q$ is spanning in its integer affine coordinates.
\end{proof}

The loss of spanning in \cref{ex:minkowski-shape} uses lower-dimensional two lattice polytopes. 
Tow full-dimensional spanning polytopes always give a spanning Minkowski sum, but their Minkowski sum can still fail very
ampleness, even when the input polytopes are unimodular simplices.

\begin{theorem}\label{ex:full-dimensional-holes}
There exist two full-dimensional unimodular simplices such that their Minkowski sum preserves the spanning property but does not preserve very ampleness, IDP, unimodular triangulations, regular unimodular triangulations, or quadratic triangulations.
\end{theorem}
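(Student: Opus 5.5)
The plan is to make the lower-dimensional construction of \cref{ex:minkowski-shape} full-dimensional, and to certify the failure with the same linear-functional argument used in \cref{ex:free-holes}. I will take $P=\Delta_d=\conv(0,e_1,\ldots,e_d)$ and $Q=A\Delta_d$, where $A\in\mathrm{GL}_d(\Z)$. Then both summands are unimodular $d$-simplices, and both trivially carry quadratic triangulations (the simplex with its faces, as in the proof of \cref{ex:minkowski-shape}). Spanning of $P+Q$ is automatic: $P+Q$ contains the integral translate $P+0$ of a full-dimensional spanning polytope, as remarked before \cref{Memm-Spanning}. So only the failure of very ampleness must be shown. Failure of IDP and of the three triangulation properties then follows from the chain of implications in \cref{sec:definitions}.

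\textbf{Choosing $A$.} I would look for $A$ whose last column mimics the vector $2e_8-e_1-\cdots-e_7$ of \cref{ex:minkowski-shape}. For instance, take $A$ with columns $e_1,\ldots,e_{d-1}$ except that one column is replaced by a vector $w$ with a large entry. The vector $w$ must keep $\det A=\pm1$; for example, $w=e_d+k(e_1+\cdots+e_{d-1})$ for a suitable integer $k$. The aim is an integer linear functional $\lambda$ and an integer $M=\max_{P+Q}\lambda$ with the following two features.
\begin{enumerate}
\item The face $F$ of $P+Q$ where $\lambda=M$ is a facet.
\item No lattice point of $P+Q$ has $\lambda=M-1$.
\end{enumerate}
To find such a pair, I would compute the facets of $P+Q$ (sums of faces of the summands with common normals) and the lattice points near each facet. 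Small cases ($d=3,4$) can be explored with the slicing technique of \cref{ex:minkowski-magic} or checked with LattE.

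\textbf{The certificate.} Once $\lambda$ and $M$ are found, I would exhibit, for every $n\ge2$, a lattice point $u_n\in n(P+Q)$ with $\lambda(u_n)=nM-1$. This needs the hyperplane $\{\lambda=nM-1\}$ to meet $n(P+Q)$ in a region that contains lattice points once $n\ge2$. Because $F$ is a facet, this intersection is a slab of lattice width about $n$ times that of $F$, so a point such as $u_n=v+(n-2)f$ with fixed $v,f$ should work, as in \cref{ex:free-holes}. Suppose $u_n=a_1+\cdots+a_n$ with $a_i\in(P+Q)\cap\Z^d$. Then
\[
1=\sum_{i=1}^n\bigl(M-\lambda(a_i)\bigr),
\]
where each summand is $0$ or at least $2$, which is impossible. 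Hence $nP+nQ$ has a hole for every $n\ge2$, so $P+Q$ is not very ample.

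\textbf{Main obstacle.} The hard step is finding a concrete $A$ for which a facet functional has the gap property (ii) while $n(P+Q)$ still meets level $nM-1$ in a lattice point. With both simplices unimodular, the vertices of $P+Q$ are lattice points, so the gap must come from the facet $F$ being a skew sum of faces of $P$ and $Q$. I would first try a three- or four-dimensional family with one free parameter $k$ and tune $k$. As a fallback, I would take a full-dimensional perturbation of the $\R^8$ example, replacing the coordinate simplices by unimodular $8$-simplices each containing one of the original $4$-simplices as a face, and check the required gap numerically.
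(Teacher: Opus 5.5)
\textbf{The gap: no example is produced.} Your reduction is sound. Spanning is automatic because $0\in Q$ gives $P\subseteq P+Q$. Given a primitive integral $\lambda$ whose adjacent level $M-1$ contains no lattice point of $P+Q$, together with a lattice point of $n(P+Q)$ on level $nM-1$, your identity $1=\sum_i(M-\lambda(a_i))$ rules out very ampleness exactly as in \cref{ex:free-holes}. But the statement is purely existential, its entire content is the pair of simplices, and you never exhibit one. Nothing is proved yet.

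\textbf{Where your search would fail.} Condition (ii) is very restrictive. A vertex of $P$ at level $M_P-1$, plus a vertex of $F_Q$, would be a lattice point on level $M-1$. A facet of a unimodular simplex has its opposite vertex at lattice distance one. Hence both top faces $F_P,F_Q$ must have dimension at most $d-2$, and every other vertex must be at lattice distance at least two.
\begin{itemize}
\item For $d=3$ this forces $F_P,F_Q$ to be nonparallel edges. Take a vertex $g$ of $Q$ at distance $a\geq2$. Then the parallelogram $F_P+(1-\frac{1}{a})F_Q+\frac{1}{a}g$ lies on level $M-1$ and always contains a lattice point. To see this, take a lattice basis $(u,u')$ of $\ker\lambda\cap\Z^3$ with $u$ along $F_P$, and use coordinates centred at a lattice point of that level. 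The $u'$-range of the parallelogram has length at least $1-\frac{1}{a}$ and endpoints in $\frac{1}{a}\Z$, so it contains an integer, and the section there is a unit segment along $u$. So no three-dimensional example exists.
\item Your family with columns $e_1,\dots,e_{d-1},w$ fails for every $k$. Since $P$ and $Q$ share $0,e_1,\dots,e_{d-1}$, a short case check shows both apexes $e_d,w$ must lie on the top face together with exactly $d-2$ shared vertices. Then $\lambda(w)=\lambda(e_d)$ forces $\lambda_1+\cdots+\lambda_{d-1}=0$. This is impossible if $0$ is on the top face, since the two remaining shared vertices then have negative values. Otherwise every $\lambda_i$ is a multiple of the top level $m\geq2$, contradicting primitivity.
\end{itemize}

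\textbf{How to complete your route.} It works from $d=4$ on, by a mechanism different from the paper's. Take $P=\conv(0,e_1,e_2,(1,1,1,2),(1,1,1,3))$ and $Q=\conv(0,e_3,(1,1,0,3),(1,1,0,4),(2,1,0,4))$.
\begin{itemize}
\item Both are unimodular, since both edge determinants equal $1$.
\item For $\lambda=-x_4$ the top face is the facet $\conv(0,e_1,e_2)+[0,e_3]$.
\item Write a point of $P+Q$ with $x_4=1$ as $p+q$ with $p_4=\theta$ and $q_4=1-\theta$. Its first two coordinates $X,Y$ are strictly positive. Also $X+Y\leq 1+\frac{\theta}{2}+\frac{3}{4}(1-\theta)\leq\frac{7}{4}$. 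So $X$ and $Y$ cannot both be integers, and $P+Q$ has no lattice point with $x_4=1$.
\item On the other hand, $(1,1,1,1)=\bigl(\frac{2}{3}e_1+\frac{2}{3}e_2+\frac{1}{3}(1,1,1,3)\bigr)+\frac{2}{3}e_3$ lies in $n(P+Q)$ for all $n\geq2$. Your counting argument then applies.
\end{itemize}

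\textbf{Comparison with the paper.} In the paper's $A,D$ the adjacent layer $x_4=1$ does contain lattice points, for instance $e_4$. The obstruction there sits inside the facet $x_4=0$, namely $\conv(0,e_1,e_2)+[0,(1,1,3)]$. Its lattice points have third coordinate $0$, $2$ or $3$, never $1$, which is what blocks decomposing $(1,1,1,0)\in n(A+D)$. Your ``empty adjacent layer'' mechanism is therefore a genuinely different certificate.
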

\begin{proof}
In $\Z^4$, set $v=(1,1,3,0)$ and
\[
 A=\conv(0,e_1,e_2,e_3+e_4,e_4),\qquad
 D=\conv(0,v,e_4,e_1+e_4,e_3+e_4).
\]
For $A$, subtracting the edge column $e_4$ from $e_3+e_4$ in the vertex matrix of $A$ gives the standard basis. 
For $D$, subtracting its edge column $e_4$ from the last two edge columns gives $v,e_4,e_1,e_3$.
Only $v$ has a nonzero second coordinate, and that coordinate is one; expansion along the second row gives a determinant of absolute value one. Thus $A$ and $D$ are unimodular four-simplices. 
In particular, both are spanning, IDP, and very ample, and their trivial triangulations are quadratic.

Every point of $A+D$ has nonnegative fourth coordinate. 
Its face where that coordinate vanishes is the sum of the corresponding faces of $A$ and $D$. 
In the first three coordinates this is
\[ F=\conv(0,e_1,e_2)+[(0,0,0),(1,1,3)].
\]
A point of $F$ has the form $(s,t,0)+\lambda(1,1,3)$ with $s,t\geq0$, $s+t\leq1$, and $0\leq\lambda\leq1$.
Assume $(x,y,z)\in F$.
By solving $z=3\lambda$, $s=x-z/3$, and $t=y-z/3$, we obtain the equivalent inequalities
\[ 0\leq z\leq3,\qquad 3x\geq z,\qquad 3y\geq z, \qquad 3x+3y\leq3+2z.
\]
At $z=1$, the two lower bounds force integer coordinates $x,y$ to be
at least one, whereas the last inequality requires $3x+3y\leq5$.
Thus no lattice point of $F$ has third coordinate one. Every lattice
point of $F$ has third coordinate zero, two, or three.

For every integer $n\geq2$, the point $u=(1,1,1,0)$ belongs to $n(A+D)$. To verify this, write
\[u=\left(\frac23e_1+\frac23e_2\right)+\frac13v.
\]
The first summand belongs to $2A$, since its coefficients are nonnegative and sum to $4/3\leq2$; the second belongs to $D$.
Because $0\in A\cap D$, one has $2A\subset nA$ and $D\subset nD$, so the claimed follows.

Suppose that $u$ were a sum of $n$ lattice points of $A+D$. Their nonnegative fourth coordinates would have to sum to zero, so
every summand would lie on the face $F$. Their third coordinates would then be nonnegative integers, each either zero or at least two. They cannot sum to the third coordinate one of $u$. This proves that decomposition fails in every degree $n\geq2$. Hence $A+D$ is not very ample, is not IDP, and admits none of the three unimodular triangulation properties under consideration. It is nevertheless spanning: $0\in D$ implies $A\subset A+D$, and the differences of lattice points of $A$ already generate $\Z^4$.
\end{proof}

\section{Cayley sums}\label{sec:cayley}

For nonempty lattice polytopes $P,Q\subset\R^d$, define the \emph{Cayley sum} (or \emph{Cayley polytop}e) of $P$ and $Q$ as
\[\Cay(P,Q)=\conv\bigl((P\times\{0\})\cup(Q\times\{1\})\bigr)\subset\R^{d+1}.
\]
The lattice is $\Z^d\times\Z$. The two layers use the same horizontal space. We denote the linear span by $U=\lin(P-P)$ and $V=\lin(Q-Q)$. Set $r=\dim(U+V)$.

\begin{proposition}
The Cayley polytope has dimension $r+1$, and, for every integer
$n\geq0$,
\begin{equation}\label{eq:cayley}
 L_{\Cay(P,Q)}(n)=
 \sum_{k=0}^{n}\bigl|((n-k)P+kQ)\cap\Z^d\bigr|.
\end{equation}
\end{proposition}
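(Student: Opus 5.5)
The proof splits into two independent parts: a dimension count via the affine span, and a slicing argument for the lattice-point formula.

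\emph{Dimension.} Choose $p_0\in P$ and $q_0\in Q$. The affine span of $\Cay(P,Q)$ contains $(p_0,0)$. Its direction space contains $U\times\{0\}$, $V\times\{0\}$ (differences inside the top layer), and the vector $(q_0-p_0,1)$. These vectors span $(U+V)\times\{0\}\oplus\R(q_0-p_0,1)$, which has dimension $r+1$ because the last vector has nonzero final coordinate. Conversely, every vertex of $\Cay(P,Q)$ lies in the affine space through $(p_0,0)$ with this direction space: points $(p,0)$ with $p\in P$ differ from $(p_0,0)$ by an element of $U\times\{0\}$, and points $(q,1)$ with $q\in Q$ differ from $(p_0,0)$ by $(q-q_0,0)+(q_0-p_0,1)$. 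Hence $\dim\Cay(P,Q)=r+1$.

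\emph{Slice identity.} The heart of the argument is that for real $n\ge0$ and $t\in[0,n]$,
\[
n\Cay(P,Q)\cap\{x_{d+1}=t\}=\bigl((n-t)P+tQ\bigr)\times\{t\}.
\]
The inclusion $\supseteq$ follows by writing $(n-t)p+tq$ with final coordinate $t$ as $n\bigl(\tfrac{n-t}{n}(p,0)+\tfrac tn(q,1)\bigr)$ when $n>0$; the case $n=0$ is trivial. For $\subseteq$, take a point of $n\Cay(P,Q)$ and write it as $n$ times a convex combination $\sum\lambda_i(p_i,0)+\sum\mu_j(q_j,1)$. Its final coordinate is $t=n\sum\mu_j$. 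Setting $\Lambda=\sum\lambda_i$ and $M=\sum\mu_j$, convexity gives $\sum\lambda_ip_i\in\Lambda P$ and $\sum\mu_jq_j\in MQ$, where a zero total weight yields the empty contribution, treated as $0\cdot P=\{0\}$. Scaling by $n$ then puts the horizontal part in $(n-t)P+tQ$.

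\emph{Counting.} Since every lattice point of $n\Cay(P,Q)$ has integer height $k\in\{0,\dots,n\}$, summing the lattice points of the integer slices $((n-k)P+kQ)\cap\Z^d$ gives \eqref{eq:cayley}.

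The only delicate step is the inclusion $\subseteq$. The care needed there is the bookkeeping of the convex weights $\Lambda$ and $M$ and the degenerate cases $t=0$ and $t=n$, where one of the Minkowski summands is $0\cdot P=\{0\}$ or $0\cdot Q=\{0\}$. With those cases handled, the rest is routine.
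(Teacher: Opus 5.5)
Your proposal is correct and follows essentially the same route as the paper: the dimension comes from the direction space spanned by $U\times\{0\}$, $V\times\{0\}$ and $(q_0-p_0,1)$, and the formula comes from the slice identity $n\Cay(P,Q)\cap\{x_{d+1}=t\}=((n-t)P+tQ)\times\{t\}$, proved by grouping the convex weights by layer and then summing over integer heights. You simply write out both inclusions of the slice identity, and the reverse containment for the affine span, in more detail than the paper does.
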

\begin{proof}
Choose $p_0\in P\cap\Z^d$ and $q_0\in Q\cap\Z^d$.
The direction space of the Cayley polytope is generated by
$U\times\{0\}$, $V\times\{0\}$, and $(q_0-p_0,1)$.
The last vector is independent of the horizontal subspace, which has
dimension $r$. This proves the dimension formula.

Let $n\geq1$. By grouping together the coefficients of points in each layer in a convex combination, we show that the slice of
$n\Cay(P,Q)$ at real height $t\in[0,n]$ is
\[ \bigl((n-t)P+tQ\bigr)\times\{t\}.
\]
At $t=0$ or $t=n$, this means the corresponding single dilated layer, using $0P=0Q=\{0\}$. 
A lattice point must have an integer height $k\in\{0,\ldots,n\}$. 
By counting the disjoint slices, we obtain \eqref{eq:cayley}. At $n=0$, both sides equal one.
\end{proof}

Equation~\eqref{eq:cayley} expresses the Ehrhart polynomial through mixed Minkowski counts. 
Its $h^*$-polynomial is then obtained from \eqref{eq:conversion} with dimension $r+1$.
The formula differs from the ordinary convolution for a lattice join: an integer point of $(n-k)P+kQ$ need not split into an integer point of $(n-k)P$ and an integer point of $kQ$.

\subsection{Equivalent conditions for IDP and spanning}

\begin{theorem}\label{prop:cayley-idp}
The polytope $\Cay(P,Q)$ is IDP if and only if, for all nonnegative integers $a,b$, every lattice point of $aP+bQ$ is a sum of $a$ lattice points of $P$ and $b$ lattice points of $Q$. By convention, an empty sum evaluates to zero.
\end{theorem}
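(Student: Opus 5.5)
The plan is to read both directions directly off the slice description from the preceding proposition. For $n\geq1$, the lattice points of $n\Cay(P,Q)$ at height $k$ are exactly $u\times\{k\}$ with $u\in((n-k)P+kQ)\cap\Z^d$. In particular, the lattice points of $\Cay(P,Q)$ itself are $(p,0)$ with $p\in P\cap\Z^d$ and $(q,1)$ with $q\in Q\cap\Z^d$. This follows by applying the slice formula with $n=1$: the height must be an integer in $\{0,1\}$, and the two slices are $P$ and $Q$.

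\emph{Sufficiency.} Take a lattice point $(u,k)\in n\Cay(P,Q)$, so that $u\in((n-k)P+kQ)\cap\Z^d$. Apply the hypothesis with $a=n-k$ and $b=k$ to write $u=p_1+\cdots+p_{n-k}+q_1+\cdots+q_k$. Then
\[(u,k)=\sum_{i=1}^{n-k}(p_i,0)+\sum_{j=1}^{k}(q_j,1),\]
which is a sum of exactly $n$ lattice points of $\Cay(P,Q)$. The heights add correctly because exactly $k$ of the summands lie in the top layer.

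\emph{Necessity.} Assume $\Cay(P,Q)$ is IDP, and let $a,b\geq0$ and $u\in(aP+bQ)\cap\Z^d$. If $a=b=0$, then $u=0$ and the empty sum works. Otherwise set $n=a+b\geq1$. The slice formula at height $b$ gives $(u,b)\in n\Cay(P,Q)\cap\Z^{d+1}$. By IDP, $(u,b)$ is a sum of $n$ lattice points of $\Cay(P,Q)$, each of height $0$ or $1$. Since the last coordinates must add to $b$, exactly $b$ summands come from the $Q$-layer and $a$ from the $P$-layer. Projecting to the first $d$ coordinates then gives the required decomposition of $u$.

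The only step needing care is the slice identity at integer heights, together with the edge cases $k=0$ or $k=n$, where one of the dilates is $\{0\}$. Both are handled by the proposition just proved, with the convention $0P=0Q=\{0\}$ and empty sums equal to zero. I expect no real obstacle beyond checking these conventions. The same argument works whether or not $P$ and $Q$ are full-dimensional, since IDP is stated with respect to the ambient lattice $\Z^{d+1}$.
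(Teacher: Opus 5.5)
Your proof is correct and follows essentially the same route as the paper. Both directions come from the layer description of the lattice points of $\Cay(P,Q)$ and the slice formula, and the heights force exactly $b$ summands from the $Q$-layer; the only cosmetic difference is that the paper checks $(u,b)\in(a+b)\Cay(P,Q)$ via an explicit convex combination $u=av_1+bv_2$, whereas you cite the slice identity directly.
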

\begin{proof}
The slice description at dilation one shows that the lattice points of $\Cay(P,Q)$ are exactly
\[\bigl((P\cap\Z^d)\times\{0\}\bigr) \ \cup\ \bigl((Q\cap\Z^d)\times\{1\}\bigr).
\]
Suppose first that the Cayley polytope is IDP. Fix $a,b\geq0$ with $a+b>0$ and $u\in(aP+bQ)\cap\Z^d$. 
Assume $u=av_1+bv_2$ with $v_1\in P$ and $v_2\in Q$. Therefore, we have
$$(u,b)=(av_1,0)+(bv_2,b)=a(v_1,0)+b(v_2,1)= (a+b) \left( \frac{a}{a+b}(v_1, 0) + \frac{b}{a+b}(v_2, 1) \right).$$
The point $(u,b)$ belongs to $(a+b)\Cay(P,Q)$. 
An IDP decomposition has $a+b$ summands, each at
height zero or one. Their heights sum to $b$, so exactly $b$ summands
come from the $Q$ layer and the remaining $a$ come from the $P$ layer.
Projecting to $\R^d$ gives the required mixed decomposition.
The case $a=b=0$ is the empty sum.

Conversely, fix $n\geq1$ and a lattice point $(u,k)$ of
$n\Cay(P,Q)$. Its height satisfies $0\leq k\leq n$, and
$u\in(n-k)P+kQ$. Apply the assumed property with $a=n-k$ and $b=k$.
Lift the resulting $P$ summands to height zero and the $Q$ summands
to height one. Their sum is $(u,k)$, and there are exactly $n$
summands. This proves IDP.
\end{proof}

\cref{prop:cayley-idp} should be attributed to Tsuchiya; see \cite[Theorem 1.3]{Tsuchiya} for a more general case.
For completeness, we also provide a proof here.
In \cref{prop:cayley-idp}, IDP of $P$ and $Q$ verifies only the cases $b=0$ and $a=0$ in this
criterion. The cases with both integers positive impose additional conditions.

\begin{theorem}\label{prop:cayley-spanning}
Suppose that $P$ and $Q$ are spanning in their integer affine coordinates. Then $\Cay(P,Q)$ is spanning if and only if
$(\Z^d\cap U)+(\Z^d\cap V)=\Z^d\cap(U+V)$ holds. In particular, it is spanning when both inputs are full-dimensional and spanning in $\R^d$.
\end{theorem}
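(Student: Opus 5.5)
The plan is to identify the lattice generated by differences of lattice points of $\Cay(P,Q)$ and compare it with the full lattice in the affine span. The first step uses the slice description at dilation one, already established in the proof of \cref{prop:cayley-idp}: the lattice points of $\Cay(P,Q)$ are exactly $(P\cap\Z^d)\times\{0\}$ together with $(Q\cap\Z^d)\times\{1\}$. Fix $p_0\in P\cap\Z^d$ and $q_0\in Q\cap\Z^d$, and take $(p_0,0)$ as base point. The difference group $\Lambda$ is then generated by three kinds of vectors: $(p-p_0,0)$ with $p\in P\cap\Z^d$; $(q-q_0,0)$, obtained as $(q,1)-(q_0,1)$; and the single vector $(q_0-p_0,1)$. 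Because $P$ and $Q$ are spanning, the first two families generate $(\Z^d\cap U)\times\{0\}$ and $(\Z^d\cap V)\times\{0\}$. Hence
\[\Lambda=\bigl(((\Z^d\cap U)+(\Z^d\cap V))\times\{0\}\bigr)+\Z(q_0-p_0,1).\]

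The second step computes the full lattice $M=\Z^{d+1}\cap W$, where $W$ is the direction space $(U+V)\times\{0\}+\R(q_0-p_0,1)$ found in the dimension proof. Take any $(w,k)\in M$. Subtract $k(q_0-p_0,1)$, which lies in both $\Lambda$ and $M$. What remains is $(w-k(q_0-p_0),0)$, an integral vector of $W$ at height zero. Since $(q_0-p_0,1)$ is the only generator of $W$ with nonzero last coordinate, this vector lies in $(U+V)\times\{0\}$. Therefore
\[M=\bigl((\Z^d\cap(U+V))\times\{0\}\bigr)+\Z(q_0-p_0,1).\]

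Comparing the two expressions gives the equivalence. The height-zero parts of $\Lambda$ and $M$ are, respectively, $(\Z^d\cap U)+(\Z^d\cap V)$ and $\Z^d\cap(U+V)$, taken at height zero. To justify this, intersect each group with height zero: any element of height zero must use the coefficient $0$ on $(q_0-p_0,1)$. It follows that $\Lambda=M$ if and only if the saturation condition \eqref{eq:saturated-sum} holds. Spanning in integer affine coordinates means exactly $\Lambda=M$, which proves the equivalence. The only point needing care is that affine-coordinate spanning really corresponds to generating $\Z^{d+1}\cap W$, and this follows from the paper's definition of integer affine coordinates.

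For the final assertion, if $P$ and $Q$ are both full-dimensional then $U=V=\R^d$. Both sides of \eqref{eq:saturated-sum} equal $\Z^d$, and the criterion is satisfied.

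I expect no serious obstacle. The most delicate step is the height-zero bookkeeping: each group must decompose as its height-zero part plus $\Z(q_0-p_0,1)$, so that equality of the groups reduces to equality of the height-zero parts.
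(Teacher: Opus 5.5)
Your proposal is correct and is essentially the paper's argument. You use the same generating set for the difference lattice. The paper applies the unimodular shear $\Phi(x,t)=(x-p_0-t(q_0-p_0),t)$ to write both lattices as products with $\Z$, and you obtain the identical splitting by hand: you subtract $k(q_0-p_0,1)$ and compare the height-zero parts.
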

\begin{proof}
To determine whether $\Cay(P,Q)$ is spanning, we analyze the lattice generated by the differences of its integer points. Fix base points $p_0\in P\cap\Z^d$ and $q_0\in Q\cap\Z^d$. The integer points of the Cayley polytope reside entirely within the two layers $(P\cap\Z^d)\times\{0\}$ and $(Q\cap\Z^d)\times\{1\}$. Consequently, the lattice of differences is generated by the horizontal displacements within each individual layer, together with a single cross layer displacement. Specifically, this lattice is generated by the elements
\[ (u-p_0,0) \text{ for } u\in P\cap\Z^d, \qquad  (v-q_0,0) \text{ for } v\in Q\cap\Z^d, \qquad  \text{and } (q_0-p_0,1).
\]

Consider the affine map $\Phi \colon \R^d\times\R \to \R^d\times\R$ defined by 
\[\Phi(x,t) = \bigl(x-p_0-t(q_0-p_0), t\bigr).
\]
Because its inverse, given by $(x,t) \mapsto \bigl(x+p_0+t(q_0-p_0), t\bigr)$, is also an integral affine transformation, $\Phi$ restricts to an automorphism of the ambient lattice $\Z^{d+1}$. By construction, $\Phi$ translates our chosen base points $(p_0,0)$ and $(q_0,1)$ to the origin $(0,0)$ and the unit vector $(0,1)$, respectively. 

In this transformed coordinate system, the affine span of the Cayley polytope is mapped to $(U+V)\times\R$. The ambient lattice restricted to this affine span is exactly $\bigl(\Z^d\cap(U+V)\bigr)\times\Z$. 

We now determine the image of the difference lattice under the linear part of $\Phi$. By the hypothesis that $P$ and $Q$ are spanning, the horizontal displacements $u-p_0$ generate the sublattice $\Z^d\cap U$, and the displacements $v-q_0$ generate $\Z^d\cap V$. Coupled with the vertical generator which maps to $(0,1)$, the transformed difference lattice is precisely
\[
  \bigl((\Z^d\cap U)+(\Z^d\cap V)\bigr)\times\Z.
\]

By definition, the Cayley polytope is spanning if and only if this generated difference lattice coincides with the ambient lattice of its affine span. Comparing the two Cartesian products, this equality holds if and only if $(\Z^d\cap U)+(\Z^d\cap V) = \Z^d\cap(U+V)$. Finally, if both $P$ and $Q$ are full-dimensional in $\R^d$, then $U=V=\R^d$, rendering this geometric condition trivially satisfied.
\end{proof}

\subsection{Counterexamples}

\begin{theorem}\label{ex:cayley-reeve}
The Cayley polytopes of two lattice polytopes does not preserve the symmetry, unimodality, log-concavity, ultra log-concavity, real-rootedness, or $\gamma$-positivity of the $h^*$-polynomial; nor does it preserve spanning, very ampleness, IDP, unimodular triangulations, regular unimodular triangulations, or quadratic triangulations.
In addition, the Cayley polytope of two lattice polytopes also does not preserve Ehrhart positivity or magic positivity.
\end{theorem}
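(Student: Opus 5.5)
The label "ex:cayley-reeve" suggests that everything will come from counterexamples whose inputs are unimodular simplices or segments (so all input properties hold trivially) while $\Cay(P,Q)$ realizes a Reeve-type tetrahedron. I would take $d=2$ and $P=\conv((0,0),(1,0))$, and for $m\geq 2$ take $Q=\conv((0,1),(1,m))$ inside $\R^2$ (equivalently $(0,1)+[(0,0),(1,m-1)]$). Both are primitive segments with $h^*=1$, so they are symmetric, real-rooted, $\gamma$-positive, ULC, spanning, IDP and carry quadratic triangulations, and both have $L(n)=n+1=(n+1)$, so they are magic and Ehrhart positive. Their Cayley sum is $\conv((0,0,0),(1,0,0),(0,1,1),(1,m,1))$. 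After a unimodular change of coordinates this is the Reeve tetrahedron $\conv(0,e_1,e_2,(1,1,m'))$ for a suitable $m'$. Its only lattice points are the four vertices, and its normalized volume equals $m-1$ up to that shift.

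For the $h^*$ computation I will use the fundamental parallelepiped formula from \cref{sec:definitions}, which gives $h^*(z)=1+(V-1)z^2$ with $V$ the normalized volume. Equivalently, I can take $h_1^*=|\Cay\cap\Z^3|-4=0$, $h_3^*=0$ (no interior points), and total $V$. For $V\geq 3$ this has an internal zero, so it is neither log-concave, ULC nor real-rooted. It is also not unimodal, since $1>0<V-1$. It is not symmetric as a degree-$2$ polynomial unless $V=2$, and hence not $\gamma$-positive. On the geometric side, the only lattice points are the vertices, so the polytope is not spanning: the vertex differences generate an index-$V$ sublattice. It follows, via the implication chain in the introduction, that none of very ampleness, IDP, or the three triangulation properties hold. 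Alternatively I would invoke \cref{prop:cayley-spanning}, since $U+V=\R^2$ but $(\Z^2\cap U)+(\Z^2\cap V)$ has index $m-1$.

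For positivity I will use the formula \eqref{eq:cayley}, or directly the classical Reeve computation $L(n)=\frac{V}{6}n^3+n^2+\left(2-\frac{V}{6}\right)n+1$. This has a negative linear coefficient once $V\geq 13$, so I would choose $m$ accordingly. Failure of Ehrhart positivity then gives failure of magic positivity, since magic positivity implies Ehrhart positivity.

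The main obstacle is bookkeeping: I must check that the chosen segments give volume exactly $V$ and that the Cayley polytope has no extra lattice points on its edges. The heights are $0$ and $1$, so the only question is the lattice points of $P$ and $Q$ themselves, which are just their endpoints. I also need to verify the volume determinant, which I expect to be $m-1$, and then take, say, $m=14$. If the Reeve identification proves messy, I will compute $L(n)$ directly from \eqref{eq:cayley} by counting lattice points of $(n-k)P+kQ$ as parallelograms via Pick's theorem.
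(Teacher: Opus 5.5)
Your proposal is correct and is essentially the paper's argument. Your translated segment $Q=(0,1)+[(0,0),(1,m-1)]$ becomes exactly the paper's Reeve tetrahedron $R_{m-1}$ under the unimodular shear $(x,y,t)\mapsto(x,y-t,t)$, so your $m=14$ (normalized volume $13$) is the paper's $m=13$, and your formulas $h^*(z)=1+(V-1)z^2$ and linear coefficient $2-V/6$ are the same computations. The only difference is cosmetic: you derive the failures of very ampleness, IDP and the three triangulation properties from non-spanning via the implication chain, while the paper exhibits explicit non-decomposable points $(1,j,1)\in nR_m$; both routes are valid.
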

\begin{proof}
Let $m\geq2$, and take
\[ P=[(0,0),(1,0)],\qquad Q=[(0,0),(1,m)]\subset\R^2.
\]
Both segments are primitive in their integer affine coordinates. Each has Ehrhart polynomial $n+1$, $h^*$-polynomial $h_P^*(z)=h_Q^*(z)=1$, and a quadratic triangulation. Their Cayley polytope is the three-dimensional simplex
\[ R_m=\conv\bigl((0,0,0),(1,0,0),(0,0,1),(1,m,1)\bigr).
\]
Note that $R_m$ is the Reeve's tetrahedron, see \cite{Reeve} and \cite[Example 3.22]{BR}.

An integer height is zero or one, and both slices are primitive segments. Hence the four displayed vertices are all the lattice
points of $R_m$. The determinant of the three nonzero vertex columns has absolute value $m$. Thus $R_m$ has normalized volume $m$.
Its lattice-point difference group has index $m$ in $\Z^3$.

According to \cite[Example 3.22]{BR}, we know that 
\begin{align*}
 h_{R_m}^*(z)&=1+(m-1)z^2,\\
 L_{R_m}(n)&=\binom{n+3}{3}+(m-1)\binom{n+1}{3}=\frac m6n^3+n^2+\frac{12-m}{6}n+1.
\end{align*}
For every $m\geq2$, the $h^*$-polynomial has an internal zero and is not unimodal or log-concave. It is therefore neither ultra log-concave nor real-rooted. For $m\geq3$ it is not symmetric about its degree two. When $m=2$, it is symmetric, but
\[ 1+z^2=(1+z)^2-2z
\]
has a negative $\gamma$-coefficient. For $m=13$, the coefficient of $n$ in $L_{R_m}$ is $-1/6$, although the $P$ and $Q$ are magic positive. Thus every coefficient property listed in \cref{tab:coefficients} can fail under the Cayley construction.

The geometric property failures can also be checked directly in every large degree. 
Fix $1\leq j\leq m-1$. It is easy to obtain $(1,j,1)\in2R_m$. Since $0\in R_m$, one has $2R_m\subset nR_m$ for every $n\geq2$.
However, every lattice point of $R_m$ has second coordinate divisible by $m$. The same is true of every sum of such points, whereas the second coordinate of $(1,j,1)$ is not divisible by $m$.
Therefore $(1,j,1)$ is not a sum of $n$ lattice points of $R_m$ for any $n\geq2$. The polytope is not very ample or IDP and admits no unimodular, regular unimodular, or quadratic triangulation. It is also nonspanning, as the index computation already shows.
\end{proof}

The failure of very ampleness also occurs with full-dimensional unimodular inputs. 
\begin{example}
Let us recall that the four-simplices $A,D$ and the vector $v=(1,1,3,0)$ from \cref{ex:full-dimensional-holes}. 
Set $C=\Cay(A,D)\subset\R^5$. Its dimension is five. 
Since polytopes $A$ and $D$ are spanning and full-dimensional in $\R^4$, \cref{prop:cayley-spanning} shows that $C$ is spanning.

For every $n\geq3$, the integer point
\[ w=(1,1,1,0,1) \in nC
\]
Indeed, its horizontal coordinate is
\[ (1,1,1,0) =\left(\frac23e_1+\frac23e_2\right)+\frac13v\in(n-1)A+D,
\]
because the first summand belongs to $2A\subset(n-1)A$ and the second belongs to $D$. 
Equation~\eqref{eq:cayley} identifies this as the height-one slice of $nC$.

Suppose that $w$ is a sum of $n$ lattice points of $C$. Exactly one summand must lie in the $D$ layer, because the last coordinates sum to one. The fourth coordinates are nonnegative and sum to zero, so all summands lie on the fourth-coordinate-zero faces of their layers. The corresponding face of $A$ is $\conv(0,e_1,e_2)$, whose third coordinate is always zero. The corresponding face of $D$ is the primitive segment $[0,v]$; its only lattice points are $0$ and $v$, since the first coordinate of $v$ is one. Its third coordinate is therefore zero or three. The sum cannot have third coordinate one.
This contradiction holds for every $n\geq3$. 

Thus $C$ is not very ample, is not IDP, and admits none of the three unimodular
triangulation properties, despite being spanning and having full-dimensional unimodular inputs.
\end{example}

\subsection{Cayley polytopes of boxes}

Positive preservation results can be established for specific families, provided the associated mixed sums exhibit additional combinatorial structure.
Fix integers $n\geq1$ and $d\geq0$, and a matrix $A=(a_{ij})\in\Z_{\geq0}^{n\times(d+1)}$ with no zero row. For $0\leq j\leq d$, define
\[ D_j=\prod_{i=1}^n[0,a_{ij}]\subset\R^n, \qquad v_0=0,\quad v_j=e_j\ (1\leq j\leq d),
\]
where $v_0,\ldots,v_d$ are the vertices of the standard simplex $\Delta_d\subset\R^d$. The associated Cayley polytope is
\[ \mathcal C(A)=\conv\left(\bigcup_{j=0}^dD_j\times\{v_j\}\right) \subset\R^{n+d}.
\]
Zero entries of $A$ are allowed, so individual boxes may be lower-dimensional. When $d=0$, the construction is just the single
box $D_0$.

A full-dimensional lattice polytope of dimension $N$ is called \emph{smooth} if exactly $N$ edges meet at each vertex and the
primitive integer vectors along those edges form a basis of $\Z^N$. Here the primitive vector along an edge is the first
nonzero lattice vector on the corresponding ray from its vertex.

\begin{theorem}{\em (Ferroni; \cite{Ferroni2026})}
For the matrix $A$ and the Cayley polytope $\mathcal C(A)$ defined above, the following statements hold.
\begin{enumerate}
\item The dimension is $n+d$, and $\mathcal C(A)$ has a regular unimodular flag triangulation \emph{\cite[Propositions~2.1--2.2]{Ferroni2026}}.
\item For every integer $k\geq0$, its Ehrhart polynomial satisfies the following formula \emph{\cite[Theorem~3.1]{Ferroni2026}}:
\begin{equation}\label{eq:box-cayley}
 L_{\mathcal C(A)}(k)= \sum_{\substack{b_0+\cdots+b_d=k\\b_0,\ldots,b_d\in\Z_{\geq0}}} \prod_{i=1}^n\left(1+\sum_{j=0}^d a_{ij}b_j\right).
\end{equation}
\item If every entry of $A$ is positive, then $\mathcal C(A)$ is smooth \emph{\cite[Proposition~2.3]{Ferroni2026}}.
\item If every row of $A$ sums to two, then $\mathcal C(A)$ is Gorenstein of index $d+1$ \emph{\cite[Proposition~2.4]{Ferroni2026}}.
\end{enumerate}
\end{theorem}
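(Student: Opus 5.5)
We prove the four parts in the order (ii), (i), (iii), (iv); each is a separate geometric computation on $\mathcal C(A)$.

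\emph{Part (ii).} We generalize the slice argument used for \eqref{eq:cayley}. Write $\mathcal C(A)\subset\R^n\times\R^d$ and project to the $\R^d$ factor. A lattice point of $k\,\mathcal C(A)$ then has second component in $k\Delta_d\cap\Z^d$, which is a point $\sum_j b_jv_j$ with $b_j\in\Z_{\geq0}$ and $\sum_j b_j=k$. Grouping the convex coefficients by layer, as in the two-layer proof, shows that the fibre over this point is the Minkowski sum $\sum_j b_jD_j$. Because every $D_j$ is a box with one corner at the origin, this sum is the box $\prod_i[0,\sum_j a_{ij}b_j]$. Its lattice-point count is $\prod_i(1+\sum_j a_{ij}b_j)$, and summing over all $(b_0,\ldots,b_d)$ gives \eqref{eq:box-cayley}.

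The dimension count in part (i) follows directly. The layers $v_j$ are affinely independent, and the boxes together span $\R^n$ because $A$ has no zero row.

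\emph{Part (i), the triangulation.} The plan is to build it from staircase triangulations of boxes, following \cite[Proposition~2.11]{HPPS}.

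Step 1. Observe that the mixed Minkowski sums of the $D_j$ are again boxes, and boxes have no integer-decomposition defect. Coordinatewise, a lattice point of $\prod_i[0,\sum_j a_{ij}b_j]$ splits into lattice points of the individual $b_jD_j$. By \cref{prop:cayley-idp} (in Tsuchiya's multi-layer form), $\mathcal C(A)$ is therefore IDP.

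Step 2. Obtain the triangulation itself. Make the change of coordinates $y_i\mapsto$ sum of coordinates, which turns each box coordinate into an interval. This exhibits $\mathcal C(A)$ as cut out by the hyperplane arrangement $\{x_i=c\}$ for $c\in\Z$ together with the layer structure. Each resulting cell is a product of simplices coming from the Cayley structure over a unit cube. Triangulate every cell by a staircase triangulation, using one fixed global ordering of coordinates and layers so that the pieces agree on common faces.

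Regularity comes from a convex lifting, for example $\sum_i\phi(x_i)$ plus a generic linear term, with $\phi$ strictly convex. Flagness holds because staircase triangulations are flag. I expect this step to be the main obstacle: checking that the local staircases glue consistently and are all simultaneously induced by a single height function.

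\emph{Part (iii), smoothness.} When every $a_{ij}>0$, each $D_j$ is a full box. Its vertices correspond to subsets $S\subseteq[n]$, namely the set of coordinates at their upper bound. The vertices of $\mathcal C(A)$ are the pairs (box vertex, layer), and they all share the same normal fan. At such a vertex the $n+d$ edges are:
\begin{itemize}
\item the $n$ box edges, with primitive directions $\pm e_i$;
\item the $d$ edges joining the vertex to the corresponding vertex of another layer, with directions $(w_{j'}-w_j,\,v_{j'}-v_j)$.
\end{itemize}
These vectors form a unimodular basis, because the $\R^d$ parts $v_{j'}-v_j$ already form a basis of $\Z^d$ and the $\R^n$ parts can be cleared using the $\pm e_i$.

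\emph{Part (iv), the Gorenstein property.} For the index we use \eqref{eq:box-cayley} together with reciprocity. An interior lattice point of $k\,\mathcal C(A)$ requires every $b_j\geq1$ and every box coordinate to be strictly interior. The first condition forces $k\geq d+1$. At $k=d+1$ we get $b_j=1$ for all $j$, and the box becomes $\prod_i[0,2]$ since each row sums to two. It has the unique interior point $(1,\ldots,1)$.

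Finally we check reflexivity of the translate by listing the facet inequalities: $x_i\geq0$, the upper-bound facets $x_i\leq\sum_j a_{ij}\lambda_j$, and the layer facets $\lambda_j\geq0$. After translating by the interior point $((1,\ldots,1),(1,\ldots,1))$, each facet sits at lattice distance one from it. For the upper-bound facets this uses $\sum_j a_{ij}=2$.
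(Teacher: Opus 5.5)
\textbf{Verdict: parts (ii), (iii), (iv) and the dimension count are sound, but the triangulation in part (i) has a genuine gap.} The construction you describe there does not work, so this is not merely a matter of checking compatibility. (A small omission in (iii): say why no further edges meet a vertex. At $(w_S^{(j)},v_j)$ exactly $d$ layer inequalities are tight, and, since $a_{ij}>0$, exactly one of $x_i\ge 0$, $x_i\le \ell_i(\lambda)$ is tight for each $i$. So the vertex is simple.)

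\emph{The arrangement $\{x_i=c\}$ does not give lattice cells.} It fails as soon as some $a_{ij}\ge 2$. Take $n=d=1$ and $A=\begin{pmatrix}0&2\end{pmatrix}$, a row summing to two, so this case lies in your Gorenstein family. In coordinates $(x,\lambda)$ we get $\mathcal C(A)=\conv\{(0,0),(0,1),(2,1)\}$. The line $x=1$ meets the edge from $(0,0)$ to $(2,1)$ at $(1,\tfrac12)$, a non-lattice point. No change of coordinates rescues the idea. For $A=\begin{pmatrix}0&3\end{pmatrix}$, both edges of $\conv\{(0,0),(0,1),(3,1)\}$ through the origin are primitive. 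Any family $\{\alpha x+\beta\lambda=c\}_{c\in\Z}$ with a member meeting the interior needs $|\beta|\ge2$ or $|3\alpha+\beta|\ge 2$. Such a family crosses one of these edges at a non-lattice point.

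\emph{The proposed lifting does not give a triangulation.} The height is $\sum_i\phi(x_i)$ plus a generic linear term. Adding an affine function never changes the induced regular subdivision. A height depending only on $x$ cannot separate the layers. For $A=\begin{pmatrix}1&1\end{pmatrix}$ one has $\mathcal C(A)=[0,1]^2$, and the four lifted points $(x,\lambda,\phi(x))$ are coplanar. The induced subdivision is therefore the square itself. Your Step~1 (IDP) is correct but does not help, since IDP does not produce a unimodular triangulation.

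\emph{The fix is already in your slice computation.} It shows
$\mathcal C(A)=\{(x,\lambda):\lambda\in\Delta_d,\ 0\le x_i\le \ell_i(\lambda)\}$, where $\ell_i(\lambda)=a_{i0}+\sum_{j\ge1}(a_{ij}-a_{i0})\lambda_j$. Each $\ell_i$ is an integral affine function, nonnegative on $\Delta_d$, because in barycentric coordinates it equals $\sum_j a_{ij}\lambda_j$ with $a_{ij}\ge0$. The simplex $\Delta_d$ is itself a Nakajima polytope, cut out by $0\le\lambda_1\le 1$ and $0\le \lambda_k\le 1-\lambda_1-\cdots-\lambda_{k-1}$. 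Adjoining $x_1,\dots,x_n$ one at a time, with lower bound $0$ and upper bound $\ell_i$, exhibits $\mathcal C(A)$ as an iterated integral chimney, i.e.\ a Nakajima polytope. By \cite[Theorem~2.8 and Corollary~2.9]{HPPS} it then has a regular unimodular flag triangulation. This is the route the paper points to. The gluing and single-height-function issues you anticipated are exactly what that chimney theorem handles inductively.
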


The triangulation assertion is an application of the integral chimney construction, which adjoins a coordinate between integral affine lower and upper bounds on the preceding polytope. The relevant triangulation results are due to Haase--Paffenholz--Piechnik--Santos \cite[Theorem~2.8 and Corollary~2.9]{HPPS}. 

Recently, by using the Cayley polytopes of boxes, Ferroni \cite[Theorem~1.2]{Ferroni2026} proved that IDP and even a quadratic triangulation do not imply the unimodality of $h^*$-polynomial.
Based on the construction of Ferroni, we may perhaps make the following remark.

\begin{remark}
There are lattice polytopes $U,D$ with real-rooted $h^*$-polynomials such that $\Cay(U,D)$ has a quadratic triangulation and a nonunimodal $h^*$-polynomial.
\end{remark}

\section{Reflexive polarity}\label{sec:polarity}

Throughout this section, $P\subset\R^d$ is full-dimensional, $0\in\relint(P)$, and polarity uses the Euclidean scalar product:
\[ P^*=\{y\in\R^d:\langle y,x\rangle\leq1\text{ for every }x\in P\}.
\]
Lattice points of both $P$ and $P^*$ are counted in $\Z^d$. The bipolar identity gives $(P^*)^*=P$. 
Consequently, if $P$ is reflexive, then $P^*$ is a reflexive lattice polytope of dimension $d$.
Hibi's symmetry theorem \cite{Hibi} says that both $h_P^*(z)$ and $h_{P^*}^*(z)$ are symmetric of degree $d$. 
Thus polarity preserves symmetry on the class of reflexive polytopes. 

In dimension zero the only polytope is $\{0\}$, and polarity fixes it.
For full-dimensional polytopes $P\subset\R^p$ and $Q\subset\R^q$ with interior origins, the standard identities exchanging products and free sums are
\begin{equation}\label{eq:polar-products}
 (P\times Q)^*=P^*\oplus Q^*,\qquad (P\oplus Q)^*=P^*\times Q^*;
\end{equation}
see \cite{Stapledon}. 
These identities use the usual product coordinates and the lattice $\Z^p\times\Z^q$.

\subsection{Positivity}

\begin{theorem}\label{prop:polar-positive}
There is an Ehrhart positive reflexive lattice polytope whose polar has a negative linear Ehrhart coefficient.
\end{theorem}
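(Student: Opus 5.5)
The witness I would use is $P=C_{10}\times[-1,1]\subset\R^{11}$, where $C_{10}$ is the cross-polytope. By \eqref{eq:polar-products}, its polar is $C_{10}^*\oplus[-1,1]^*=[-1,1]^{10}\oplus[-1,1]=B_{10}$, the polytope from \cref{ex:bipyramid}. The expansion \eqref{eq:bipyramid-polynomial} shows that $B_{10}$ has linear Ehrhart coefficient $-4450/33<0$. So it remains to check two things about $P$: that it is a reflexive lattice polytope, and that it is Ehrhart positive.

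\emph{Reflexivity.} This is routine. $C_{10}$ has facets $\langle \varepsilon,x\rangle\leq1$ with $\varepsilon\in\{\pm1\}^{10}$. The interval $[-1,1]$ is reflexive. A product of polytopes with interior origins has as facets exactly the facets of one factor times the other factor, so every facet inequality of $P$ has right-hand side $1$ and a primitive integral normal. Equivalently, $P^*=B_{10}$ is a lattice polytope.

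\emph{Ehrhart positivity.} By the product formula of \cref{sec:product},
\[L_P(n)=L_{C_{10}}(n)\,(2n+1).\]
A product of polynomials with positive coefficients again has positive coefficients, so it suffices to show that $C_{10}$ is Ehrhart positive. I would invoke the known fact that all roots of $L_{C_d}(n)=\sum_j 2^j\binom dj\binom nj$ lie on the line $\operatorname{Re}(n)=-\tfrac12$ (Rodriguez-Villegas; Bump--Choi--Kurlberg--Vaaler). Given this, $L_{C_d}$ factors over $\R$ into linear factors $n+\tfrac12$ and quadratic factors $(n+\tfrac12)^2+b^2=n^2+n+\tfrac14+b^2$. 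Each factor has positive coefficients, and the leading coefficient $2^d/d!$ is positive, so $L_{C_d}$ is Ehrhart positive.

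\emph{Main obstacle.} The only nontrivial step is this positivity of $L_{C_{10}}$. If one prefers not to cite the root-location theorem, a fallback is simply to expand
\[L_{C_{10}}(n)=\sum_{j=0}^{10}2^j\binom{10}{j}\binom nj\]
explicitly, as was done for \eqref{eq:bipyramid-polynomial}, and check all eleven coefficients directly, perhaps with LattE. Either way, the statement follows with $P=C_{10}\times[-1,1]$.
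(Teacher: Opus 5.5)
Your proposal is correct and follows the paper's proof essentially verbatim. It uses the same witness $P=C_{10}\times[-1,1]$, the same polar identity $P^*=[-1,1]^{10}\oplus[-1,1]=B_{10}$ with linear coefficient $-4450/33$, and the same factorization $L_P(n)=(2n+1)L_{C_{10}}(n)$. The one difference is that you justify Ehrhart positivity of $C_{10}$ through the root-location theorem (all zeros on the line of real part $-\tfrac12$), whereas the paper simply cites Liu's survey for that fact, and that citation rests on the same result.
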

\begin{proof}
The standard cross-polytope
\[C_d=\conv(\pm e_1,\ldots,\pm e_d)
\]
is Ehrhart positive for every integer $d\geq1$; see \cite[Section~2.2.1]{Liu}. 
Its polar is the cube $[-1,1]^d$, as follows from the inequalities $|y_i|\leq1$ imposed by its vertices. 
In particular, both polytopes in this polar pair are reflexive.
Set
\[P=C_{10}\times[-1,1]\subset\R^{11}.
\]
By \eqref{eq:polar-products}, we have 
\[P^*=[-1,1]^{10}\oplus[-1,1]=B_{10},
\]
where $B_{10}$ is the cube bipyramid of \cref{ex:bipyramid}.
Both $P$ and $P^*$ have integer vertices and interior origins, so $P$ is reflexive. Furthermore, we have 
\[L_P(n)=(2n+1)L_{C_{10}}(n).
\]
Therefore, $P$ is Ehrhart positive since every coefficient of $L_{C_{10}}$ is positive.
On the other hand, by \eqref{eq:bipyramid-polynomial}, we have the linear coefficient of $L_{P^*}$ as $-4450/33$. 
This proves the assertion.
\end{proof}

\begin{theorem}\label{ex:polar-triangle}
The reflexive polarity preserves none of real-rootedness, ultra log-concavity, $\gamma$-positivity of the $h^*$-polynomial, and magic positivity.
\end{theorem}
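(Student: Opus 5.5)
The plan is to work in dimension two, where every reflexive polytope is a reflexive polygon and all four properties reduce to one inequality on the number $b$ of boundary lattice points. For a reflexive polygon $P$ the origin is the unique interior lattice point, so Pick's theorem gives $\operatorname{vol}_2(P)=b/2$ and hence
\[ L_P(n)=\frac b2 n^2+\frac b2 n+1,\qquad h_P^*(z)=1+(b-3)z+z^2, \]
using $h_1^*=|P\cap\Z^2|-3$ and $h_2^*=1$. The second ingredient is the classical polar-pair identity $b(P)+b(P^*)=12$ for reflexive polygons. If one prefers not to cite it, the plan does not depend on it: it is enough to compute $b$ directly for one explicit polar pair.

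Next I would translate each property into a condition on $b$.
\begin{enumerate}
\item \emph{Real-rootedness.} The discriminant condition reads $(b-3)^2\geq4$, which for the relevant range means $b\geq5$.
\item \emph{Ultra log-concavity.} The condition $h^*\in\ULC(2)$ reads $\bigl((b-3)/2\bigr)^2\geq1$ together with no internal zeros, which again means $b\geq5$.
\item \emph{$\gamma$-positivity.} Writing $1+(b-3)z+z^2=(1+z)^2+(b-5)z$ gives $\gamma_1=b-5$, so again the condition is $b\geq5$.
\item \emph{Magic positivity.} Compute
\[ (1-x)^2L_P\!\left(\frac x{1-x}\right)=(1-x)^2+\frac b2x=1+\Bigl(\frac b2-2\Bigr)x+x^2, \]
so magic positivity holds if and only if $b\geq4$.
\end{enumerate}

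For the concrete pair I would take the triangle $P=\conv((-1,-1),(2,-1),(-1,2))$. It has facet inequalities $-x\leq1$, $-y\leq1$, $x+y\leq1$, so it is reflexive. It has $b=9$, hence $h_P^*=1+6z+z^2$, which is real-rooted, lies in $\ULC(2)$ and is $\gamma$-positive. Its magic coefficients are $(1,5/2,1)$, all nonnegative.

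The polar of $P$ has the facet normals as vertices:
\[ P^*=\conv((-1,0),(0,-1),(1,1)). \]
This triangle has $b=3$, with no boundary lattice points other than its vertices, which is consistent with $9+3=12$. Consequently $h_{P^*}^*=1+z^2$. This polynomial has an internal zero, so it is neither real-rooted nor in $\ULC(2)$, and $\gamma_1=-2<0$. Its magic coefficient at $x$ is $3/2-2=-1/2<0$. This single pair therefore witnesses all four failures simultaneously.

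The only step that needs care is the correct computation of the polar and of its boundary count. Specifically, one must check that the three vertices of $P^*$ are exactly the primitive facet normals $-e_1$, $-e_2$, $e_1+e_2$ with right-hand side $1$. One must also check that each edge of $P^*$ is primitive, so that $b=3$; the edge vectors $(1,-1)$, $(1,2)$, $(2,1)$ are all primitive. Everything else is routine.
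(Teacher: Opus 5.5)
Your strategy, and even your example, coincide with the paper's: your $P$ is $-S^*$ and your $P^*$ is $-S$, where $S=\conv(e_1,e_2,-e_1-e_2)$. However, your $h^*$-computation contains an off-by-one error.

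\textbf{The error.} A reflexive polygon has the origin as an interior lattice point, so $|P\cap\Z^2|=b+1$, not $b$. Hence $h_1^*=|P\cap\Z^2|-3=b-2$. Your own formulas confirm this. Your Ehrhart polynomial gives $L_P(1)=b+1$. Also, the coefficients of $h_P^*$ must sum to $\Vol(P)=b$, whereas $1+(b-3)z+z^2$ sums to $b-1$. The correct formula is $h_P^*(z)=1+(b-2)z+z^2=(1+z)^2+(b-4)z$.

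\textbf{Consequences.} For your triangle with $b=9$ one gets $h_P^*=1+7z+z^2$, and for its polar $h_{P^*}^*=1+z+z^2$, not $1+z^2$. In particular, $h_{P^*}^*$ has no internal zero. So the reason you give for the failure of real-rootedness and of $\ULC(2)$ is false, and $\gamma_1=-1$ rather than $-2$. The thresholds in your items (i)--(iii) should all read $b\ge 4$, not $b\ge 5$.

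\textbf{Why the conclusion survives.} The repair is immediate because $3<4\le 9$. The polynomial $1+z+z^2$ has discriminant $-3<0$, violates $(1/2)^2\ge 1$, and has $\gamma_1=-1$. The polynomial $1+7z+z^2$ has discriminant $45$, satisfies $(7/2)^2\ge 1$, and has $\gamma_1=5$. Your magic computation $1+(b/2-2)x+x^2$ was already correct, since it was derived directly from $L_P$.

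With this correction your argument is exactly the paper's. Your reduction to $b$ adds a small bonus: for reflexive polygons all four properties are equivalent to the single condition $b\ge 4$. Together with $b(P)+b(P^*)=12$ and $b\ge 3$, this shows that a two-dimensional failure of preservation forces $b(P)=9$ and $b(P^*)=3$.
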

\begin{proof}
We consider the polytope
\[S=\conv(e_1,e_2,-e_1-e_2).
\]
The origin is the average of its three vertices and lies in its interior.
By the definition of polarity, we have 
\[S^*=\{(y_1,y_2):y_1\leq1,\ y_2\leq1,\ -y_1-y_2\leq1\}=\conv\bigl((1,1),(1,-2),(-2,1)\bigr).
\]
Indeed, intersecting any two of the three boundary lines gives one of the displayed vertices, and each satisfies the remaining inequality. At the origin, all three left sides are less than one. Thus $S$ and $S^*$ are reflexive triangles.

According to the Pick's theorem \cite[Theorem 2.8]{BR}, we obtain the Ehrhart polynomials and the $h^*$-polynomials in the following table.
\[\begin{array}{c|c|c|c}
 &L(n)&h^*(z)&\text{magic coefficients}\\ \hline
 S&\frac32n^2+\frac32n+1&1+z+z^2&(1,-\frac12,1)\\
 S^*&\frac92n^2+\frac92n+1&1+7z+z^2&(1,\frac52,1)
 \end{array}
\]
Here the magic coefficients use the ordered basis $(n+1)^2,n(n+1),n^2$. 

Take $S^*$ as the input polytope to polarity. Its numerator $1+7z+z^2$ has discriminant $45>0$, belongs to $\ULC(2)$ because $7^2\geq4$, and has the expansion
\[1+7z+z^2=(1+z)^2+5z.
\]
It is therefore real-rooted and $\gamma$-positive, and its displayed magic coefficients are nonnegative. 

Its polar $S$ has $h^*$-polynomial $1+z+z^2$, whose discriminant is $-3$. This numerator fails $\ULC(2)$ because $1^2<4$, and its expansion
\[1+z+z^2=(1+z)^2-z
\]
has a negative $\gamma$-coefficient. The negative middle magic coefficient of $L_S(n)$ also rules out magic positivity. Thus reflexive polarity preserves none of real-rootedness, ultra log-concavity, $\gamma$-positivity, and magic positivity.
\end{proof}

\subsection{Coefficient properties and geometric properties}

\begin{theorem}\label{ex:polar-payne}
The reflexive polarity preserves neither unimodality nor log-concavity of the $h^*$-polynomial.
Furthermore, it preserves none of very ampleness, IDP, unimodular triangulations, regular unimodular triangulations, or quadratic triangulations.
\end{theorem}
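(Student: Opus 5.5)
The plan is to reverse the direction of a known pathology. I would start from a reflexive simplex whose own $h^*$-polynomial is \emph{not} unimodal, and use its polar as the input polytope. The natural source is the family of Mustață--Payne reflexive simplices. In dimension six there is a reflexive simplex $\Delta$ whose $h^*$-polynomial has the shape $1+z+z^2+cz^3+z^4+z^5+z^6$ with $c>1$. Such a polynomial is symmetric but has a peak in the middle surrounded by ones, so it is not unimodal and not log-concave (already $1^2<1\cdot c$).

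I would take the input to be $P:=\Delta^*$. The bipolar identity recalled at the start of this section gives $P^*=\Delta$, so the output of reflexive polarity is $\Delta$. All coefficient failures on the output side are then immediate from the displayed $h^*$-polynomial of $\Delta$. For the geometric failures on the output side, I would argue as follows.
\begin{enumerate}
\item Since $\Delta$ is reflexive, hence Gorenstein, the theorem of Adiprasito--Papadakis--Petrotou--Steinmeyer \cite{APPS22} applies.
\item That theorem says a Gorenstein IDP polytope has unimodal $h^*$. Because $h^*_\Delta$ is not unimodal, $\Delta$ is not IDP.
\item Since IDP fails, $\Delta$ has none of the three triangulation properties either.
\item For very ampleness, the APPS theorem is not enough. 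I would write down an explicit lattice point $u\in n\Delta$, for every $n\geq2$, that is not a sum of lattice points of $\Delta$. I would build it in the style of the proof of \cref{ex:free-holes}, using a linear functional $\lambda$ that takes only values $\leq 1$ on $\Delta\cap\Z^6$ with a forbidden residue. This should be manageable because the vertices of a Mustață--Payne simplex have a very explicit weight description.
\end{enumerate}

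The real work is on the input side: I must show that $P=\Delta^*$ has unimodal and log-concave $h^*$ and a quadratic triangulation. $P$ is again a reflexive simplex, with vertices $u_H/b_H=u_H$ coming from the facet normals of $\Delta$.
\begin{enumerate}
\item I would compute $h^*_P$ with the fundamental parallelepiped formula from \cref{sec:definitions}. This means enumerating the $\Vol(P)$ lattice points of $\Pi_P$ through the group $\Z^{7}/\langle(v_i,1)\rangle$ and reading off their heights.
\item Log-concavity and unimodality would then be checked by hand on the resulting symmetric degree-six vector; LattE \cite{DeLoera04} can serve as a cross-check.
\item For the triangulation, I would take the regular triangulation obtained by coning from the origin over the boundary. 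I would first check that each facet of $P$, which corresponds to a vertex $v$ of $\Delta$ with $\langle v,\cdot\rangle=1$, is a unimodular simplex. If some facet is not unimodular, I would refine it using its boundary lattice points.
\item Once the boundary triangulation is unimodular, the cone over it from the interior point $0$ is unimodular, because every facet of a reflexive polytope has height one.
\item Regularity follows by lifting $0$ very low and the boundary points by a regular function.
\item Flagness is automatic if the boundary complex is the boundary of a simplex with no missing edges, or if it is built as a join of flag pieces.
\end{enumerate}

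The step I expect to be the main obstacle is precisely this quadratic triangulation of $\Delta^*$, together with choosing the right member of the Mustață--Payne family. If the facets of $\Delta^*$ are not unimodular, a flag refinement has to be exhibited explicitly. My first attempt would be to pick a member whose dual is a lattice pyramid or join of small reflexive pieces, so that \cref{thm:join-geometry} and the pyramid discussion in \cref{Section-Pyr} supply the quadratic triangulation for free. Failing that, I would use the polygon and chimney constructions of \cite{HPPS} on a suitable projection.
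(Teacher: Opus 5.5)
There is a genuine gap at the first step. The sequence $1,1,1,c,1,1,1$ with $c>1$ \emph{is} unimodal: it rises weakly to a single peak at index $3$ and then falls, which is exactly the definition. Such reflexive simplices do exist; for instance $\conv(e_1,\ldots,e_6,-(1,1,1,1,1,2))$ has $h^*=1+z+z^2+2z^3+z^4+z^5+z^6$. But they only witness the failure of log-concavity, so your example does not show that polarity destroys unimodality. Your route to the geometric failures collapses with it. The contrapositive of \cite{APPS22} needs a non-unimodal $h^*$, so it says nothing about IDP of your $\Delta$, and the triangulation failures are then unsupported.

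You need an $h^*$-vector with a genuine dip. The paper uses Payne's simplex $R=\conv(e_1,\ldots,e_6,-(1,1,1,1,1,3))$ with $h_R^*=(1+z+z^2)(1+z^2+z^4)=1+z+2z^2+z^3+2z^4+z^5+z^6$. For this $R$ your appeal to \cite{APPS22} would be valid, but the paper avoids it. Since $-e_6\in 2R$, the point $(n-2)e_1-e_6$ lies in $nR$ for every $n\geq2$. The only lattice points of $R$ are its seven vertices and $0$. If $a$ summands equal $-(1,1,1,1,1,3)$, matching coordinates forces $3a-1$ copies of $e_6$ (so $a\geq1$), $a$ copies of each of $e_2,\ldots,e_5$, and $n-2+a$ copies of $e_1$. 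That is $n-3+9a>n$ nonzero summands, a contradiction. This one point kills very ampleness, hence IDP and all three triangulation properties. So the step you postponed is in fact the entire geometric half.

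On the input side, which you rightly call the real work, the plan does not yet go through. Your first idea is impossible: a reflexive polytope has codegree $1$, while $r_{\pyr(P)}=r_P+1$ and $r_{P*Q}=r_P+r_Q$ are at least $2$. So $\Delta^*$ is never a lattice pyramid or standard join, and \cref{thm:join-geometry} is unavailable. Coning over the boundary is no shortcut either. For the correct example, the seven facets of $R^*$ have normalized volumes $3^9$ or $3^{10}$, summing to $\Vol(R^*)=3^{11}$. Producing compatible regular unimodular flag refinements of them is the whole difficulty. Note also that the boundary of a simplex is not flag, since its full vertex set is a minimal non-face.

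The missing idea is an integral change of coordinates. The map $u=\one-y$ identifies $R^*$ with $T=\conv(0,9e_1,\ldots,9e_5,3e_6)$. With $t_i=u_1+\cdots+u_i$ for $i\leq5$, $T$ becomes $\{0\le u_6\le3,\ 0\le t_5\le 9-3u_6,\ 0\le t_i\le t_{i+1}\ (1\le i\le4)\}$. This is a Nakajima polytope, which has a regular unimodular flag triangulation by \cite[Corollary~2.9]{HPPS}. The same description gives $h^*_{R^*}$ cheaply. Slicing by $u_6$ yields $L_{R^*}(n)=\sum_{j=0}^{3n}\binom{9n-3j+5}{5}$, hence $h_{R^*}^*=1+2514z+40236z^2+91645z^3+40236z^4+2514z^5+z^6$. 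This replaces enumerating the $177147$ points of the fundamental parallelepiped.
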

\begin{proof}
We consider the six-dimensional simplex
\[ R=\conv\bigl(e_1,\ldots,e_6,-(1,1,1,1,1,3)\bigr).
\]
This is the member with parameters $r=0$, $s=3$, and $k=2$ of Payne's family \cite{Payne}, in the coordinates recorded by
Liu \cite[Example~2.2.5]{Liu}. Its $h^*$-polynomial is given by 
\[ h_R^*(z)=(1+z+z^2)(1+z^2+z^4)=1+z+2z^2+z^3+2z^4+z^5+z^6.
\]
It is neither unimodal nor log-concave. We will verify that $R^*$ has a log-concave $h^*$-polynomial and a quadratic
triangulation, while $R$ is not very ample.

First, the identity
\[
 0=\frac19\bigl(-(1,1,1,1,1,3)+e_1+\cdots+e_5+3e_6\bigr)
\]
expresses the origin as a convex combination of all seven vertices with positive coefficients. These vertices affinely span $\R^6$, so $0\in\relint(R)$. The polar inequalities are
\[ y_i\leq1\quad(1\leq i\leq6),\qquad y_1+\cdots+y_5+3y_6\geq-1.
\]
The integer affine change $u_i=1-y_i$ therefore gives $R^*=\one-T$, where
$$T=\{u\in\R_{\geq0}^6:u_1+\cdots+u_5+3u_6\leq9\}=\conv(0,9e_1,\ldots,9e_5,3e_6).$$
In particular, $R^*$ has integer vertices, which directly verifies reflexivity. For each integer $n\geq0$, the map
$u\longmapsto n\one-u$ is a bijection from $nT\cap\Z^6$ to $nR^*\cap\Z^6$. Thus the reflection and translation preserve the
Ehrhart polynomial.

For a lattice point of $nT$, its sixth coordinate is an integer $j\in\{0,\ldots,3n\}$. Once this coordinate is fixed, the other
five are nonnegative integers with sum at most $9n-3j$. Adding a nonnegative slack coordinate identifies these points with
six-tuples of nonnegative integers having sum $9n-3j$. Their number is $\binom{9n-3j+5}{5}$. Consequently, we have 
\[ L_{R^*}(n)=\sum_{j=0}^{3n}\binom{9n-3j+5}{5} \qquad(n\geq0).
\]
By \eqref{eq:conversion}, we obtain
\[ h_{R^*}^*(z)=1+2514z+40236z^2+91645z^3+40236z^4+2514z^5+z^6.
\]
Therefore, the polynomial $h_{R^*}^*(z)$ is symmetric, unimodal, and log-concave.
Thus $R^*$ is an input polytope whose $h^*$-polytope is both unimodal and log-concave, whereas its polar $R$ has neither property.

We next verify the triangulation assertion. A \emph{Nakajima polytope} is obtained from a lattice point by repeatedly replacing a lattice polytope $Q$ with
\[\{(x,t):x\in Q,\ 0\leq t\leq f(x)\},
\]
where $f$ is an integral affine function that is nonnegative on $Q$. Here integral affine means that the constant term and all linear coefficients are integers in the chosen lattice coordinates. Every Nakajima polytope admits a regular unimodular flag triangulation \cite[Corollary~2.9]{HPPS}; this is a quadratic triangulation in our terminology. This result allows the upper function to vanish on a face.

For $u\in T$, define $t_i=u_1+\cdots+u_i$ for $1\leq i\leq5$ and retain the coordinate $u_6$. This transformation is an integer
linear bijection of $\Z^6$: its inverse is
\[ u_1=t_1,\qquad u_i=t_i-t_{i-1}\quad(2\leq i\leq5).
\]
Its matrix on the first five coordinates is triangular with diagonal entries one, and hence is unimodular. 
In these coordinates $T$ is described by
\[ 0\leq u_6\leq3,\qquad 0\leq t_5\leq9-3u_6,\qquad 0\leq t_i\leq t_{i+1}\quad(1\leq i\leq4).
\]
Starting with a point, adjoin the coordinates in the order $u_6,t_5,t_4,t_3,t_2,t_1$. The corresponding upper functions are
$3,9-3u_6,t_5,t_4,t_3,t_2$; each is integral affine and nonnegative on the previously constructed polytope. This proves that the transformed polytope is a Nakajima polytope. 

Therefore, this construction yields a quadratic triangulation, which is subsequently mapped to a quadratic triangulation of $R^*$ via the inverse unimodular transformation and the integral affine map $u \mapsto \one - u$.
Indeed, an integer affine lattice bijection preserves normalized volumes and face incidences, and transporting the lifting heights
preserves regularity. In particular, $R^*$ has a regular unimodular triangulation, a unimodular triangulation, IDP, very ampleness, and spanning.

Finally, by $h_1^*(R)=1$ and \eqref{eq:conversion}, we have $|R\cap\Z^6|=6+1+1=8$. Its seven vertices and the origin already
give eight distinct lattice points, so this is the complete list. The equality
\[ -e_6=\frac13\bigl(e_1+\cdots+e_5-(1,1,1,1,1,3)\bigr)
\]
expresses $-e_6$ as a nonnegative combination of vertices of $R$ whose coefficients sum to two. Thus $-e_6\in2R$.
For every integer $n\geq2$, adding $n-2$ copies of $e_1$ gives
\[
 (n-2)e_1-e_6\in nR\cap\Z^6.
\]
Suppose this point were a sum of $n$ lattice points of $R$, and let $a$ denote the number of summands equal to $-(1,1,1,1,1,3)$.
The sixth coordinate forces the number of $e_6$ summands to be $3a-1$, so $a\geq1$. The second through fifth coordinates force
$a$ copies of each of $e_2,\ldots,e_5$, and the first coordinate forces $n-2+a$ copies of $e_1$. The number of nonzero summands is
therefore
\[a+(3a-1)+4a+(n-2+a)=n-3+9a\geq n+6>n,
\]
a contradiction. There is consequently an undecomposable lattice point in every $nR$ with $n\geq2$. Hence $R$ is not very ample and is not IDP. Since a unimodular triangulation implies IDP, $R$ has none of the three unimodular triangulation properties in \cref{tab:coefficients}. It is nevertheless spanning, because $0,e_1,\ldots,e_6\in R$.
This proves the five geometric failures in the direction $R^*\mapsto R$.
\end{proof}

\begin{theorem}\label{ex:polar-spanning}
The reflexive polarity does not preserve spanning.
\end{theorem}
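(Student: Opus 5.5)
We exhibit one explicit reflexive pair $(R,R^*)$ in which $R^*$ is spanning and $R$ is not. Then $R^*$ is an input to polarity that has the property, and its output $R=(R^*)^*$ does not. This mirrors the direction $R^*\mapsto R$ in \cref{ex:polar-payne}.

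\emph{Choosing the simplex.} The natural candidate is a Batyrev-type quotient of the standard reflexive simplex. Start from $\conv(e_1,\ldots,e_4,-(e_1+\cdots+e_4))$. Refine the lattice by adjoining $w=\tfrac15(1,2,3,4)$, and rewrite everything in a basis of the refined lattice. Use the basis $w,e_2,e_3,e_4$, and note that $e_1=5w-2e_2-3e_3-4e_4$. In these coordinates set
\[R=\conv\bigl((5,-2,-3,-4),\,e_2,\,e_3,\,e_4,\,(-5,1,2,3)\bigr)\subset\R^4 .\]
The last vertex is minus the sum of the first four, so $0$ is their barycentre and lies in $\relint(R)$.

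\emph{Step 1: $R$ is reflexive.} Compute the five facet normals as the solutions $u$ of $\langle u,v_i\rangle=1$ on each 4-subset of vertices, and check that they are integral. In the original coordinates, the polar of the standard simplex is $\conv$ of the points $5e_i-\one$ and $-\one$. Passing to the finer lattice replaces the dual lattice by the index-5 sublattice $\{y:\langle y,w\rangle\in\Z\}$. So the check amounts to verifying that these five vertices satisfy $y_1+2y_2+3y_3+4y_4\equiv0\pmod 5$, which is routine.

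\emph{Step 2: $R$ is not spanning.} We need $R\cap\Z^4$ to consist only of the five vertices and $0$. Every other point of $R$ is $\sum\lambda_iv_i$ with $\lambda_i\in[0,1]$, and after subtracting the origin relation we may take some $\lambda_i=0$. Such a point is integral in the refined lattice exactly when it lies in the finite group $\Z^4/\langle v_1,\dots,v_4\rangle\cong\Z/5$. That group is generated by $w$, whose coordinates in the old basis are $\tfrac15(1,2,3,4)$, so its multiples are $\tfrac k5(1,2,3,4)$ reduced mod $1$. For each nonzero multiple the coordinate sum is at least $2>1$, so none of them lies in the simplex. Hence $R\cap\Z^4=\{0,v_1,\ldots,v_5\}$. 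These points generate $\langle v_1,\dots,v_4\rangle$, which has index $|\det(v_1,\dots,v_4)|=5$ in $\Z^4$. So $R$ is not spanning.

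\emph{Step 3: $R^*$ is spanning.} $R^*$ is a reflexive lattice polytope containing $0$. It should also contain $n$ lattice points whose determinant is $\pm1$. A natural source is the lattice points of $R^*$ lying near the origin; in the original description these are points of the translated dilated simplex $5\Delta_4-\one$ meeting the congruence. A quicker route is to list a few small lattice points $y$ with $\langle y,v_i\rangle\le1$ for all $i$ and compute their determinant.

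The main obstacle is Step 3, or more precisely getting the coordinates right so that Steps 1–3 are all verified in the same basis. If the determinant search fails, I would fall back on \cref{prop:cayley-spanning}-style index bookkeeping: show that the points of $R^*$ together with $0$ generate the whole dual lattice by exhibiting points whose coordinate differences include $e_1,\dots,e_4$. Alternatively, I would switch to a smaller three-dimensional analogue, namely a $\Z/2$ quotient of the reflexive tetrahedron, where $R\cap\Z^3$ and the generators of $R^*$ can be listed by hand.
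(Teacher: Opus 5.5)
Your example works, but the proposal does not yet prove the statement. Step~3 is only asserted (``it should also contain\ldots''). Since you use the direction $R^*\mapsto R$, showing that the input $R^*$ is spanning is half of what must be proved. The missing computation is short. Use the paper's convention $\langle y,x\rangle\leq1$; in the old coordinates your $\conv(5e_i-\one,-\one)$ is the negative of the polar, which does not affect integrality or spanning. In your coordinates the vertices of $R^*$ are
\[(2,1,1,1),\quad(1,1,1,1),\quad(0,-4,1,1),\quad(-1,1,-4,1),\quad(-2,1,1,-4),\]
and since these are integral, this also settles Step~1. The lattice points $(1,1,1,0)$, $(1,1,0,1)$, $(1,0,0,1)$ satisfy $\langle y,v_i\rangle\leq1$ for all five vertices of $R$. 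Moreover $e_1=(2,1,1,1)-(1,1,1,1)$, $e_2=(1,1,0,1)-(1,0,0,1)$, $e_3=(1,1,1,1)-(1,1,0,1)$ and $e_4=(1,1,1,1)-(1,1,1,0)$, so $R^*$ is spanning. Do not rely on your three-dimensional fallback. For $\conv(e_1,e_2,e_3,-\one)\subset\R^3$, the only index-two refinements that keep the polar integral adjoin $\frac12(e_i+e_j)$. That point is an edge midpoint, so the quotient is spanning.

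Step~2 also needs a repair. The fractional parts of $\frac k5(1,2,3,4)$ only rule out new lattice points in the sub-simplex $\conv(0,v_1,\ldots,v_4)$. The other four sub-simplices contain $v_5$ and are not covered by that computation. Here is a uniform argument. We have $v_1+\cdots+v_5=0$ and $w=\frac15\sum_i a_iv_i$ with $a=(1,2,3,4,0)$. A point $\sum_i\mu_iv_i\in R$, with $\mu_i\geq0$ and $\sum_i\mu_i=1$, lies in the class of $kw$ only if $\mu_i\equiv(ka_i+j)/5 \pmod 1$ for one common integer $j$. For $k\not\equiv0\pmod 5$ these residues are $0,\frac15,\ldots,\frac45$ in some order, which forces $\sum_i\mu_i\geq2$, a contradiction.

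With these two repairs you have a correct proof using a different example from the paper's. Both rest on the same mechanism: refine the lattice by a finite group so that the polar stays integral but the polytope gains no lattice points. The paper applies this to the cross-polytope $C_4$ refined by $\frac12\one$ (its $V$). There, excluding new points is an $\ell_1$-norm bound on the half-integer coset, and spanning of $V^*$ is read off from $0,e_1,e_2,e_3,(1,0,0,1)$. Your $\Z/5$ quotient of the standard reflexive simplex replaces this with the fractional-part bookkeeping above. It works equally well; the paper's index-two example is just cheaper to verify.
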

\begin{proof}
In the lattice $\Z^4$, we consider the polytope 
\[ V=\conv\bigl(\pm e_1,\pm e_2,\pm e_3,\pm(-1,-1,-1,2)\bigr).
\]
The matrix with columns $e_1,e_2,e_3,(-1,-1,-1,2)$ has determinant two. These directions are linearly independent, so their centrally symmetric convex hull has dimension four and contains the origin in its interior. The polar is
\[ V^*=\{y\in\R^4:|y_1|,|y_2|,|y_3|\leq1,\ |-y_1-y_2-y_3+2y_4|\leq1\}.
\]
The invertible linear map $y\mapsto(y_1,y_2,y_3,-y_1-y_2-y_3+2y_4)$ sends $V^*$ to $[-1,1]^4$. Its sixteen vertices are therefore exactly
\[\left(\varepsilon_1,\varepsilon_2,\varepsilon_3,\frac{\varepsilon_1+\varepsilon_2+\varepsilon_3+\varepsilon_4}{2}\right),
 \qquad \varepsilon_i\in\{-1,1\}.
\]
The sum of four odd integers is even, so the last coordinate is integral. Thus $V$ and $V^*$ are reflexive lattice polytopes.

To determine all lattice points of $V$, use the rational linear map
\[T(x)=\left(x_1+\frac{x_4}{2},x_2+\frac{x_4}{2},x_3+\frac{x_4}{2},\frac{x_4}{2}\right).
\]
It sends the four positive directions defining $V$ to $e_1,e_2,e_3,e_4$, respectively, and hence
\[ T(V)=\{t\in\R^4:|t_1|+|t_2|+|t_3|+|t_4|\leq1\}.
\]
Its inverse is
\[ T^{-1}(t)=(t_1-t_4,t_2-t_4,t_3-t_4,2t_4),
\]
so the transformed lattice is exactly
\[ T(\Z^4)=\Z^4\ \cup\ \bigl(\Z^4+\frac{1}{2}\one\bigr).
\]
Indeed, even $x_4$ gives four integer coordinates and odd $x_4$ gives four half-integer coordinates; the inverse formula shows that
every point in either displayed coset comes from $\Z^4$.
Every point in the half-integer coset has each coordinate of absolute value at least $1/2$, so its sum of absolute coordinates is at least two. No such point belongs to $T(V)$. 
In the integer coset, the only points whose sum of absolute coordinates is at most one are $0,\pm e_1,\ldots,\pm e_4$. Their inverse images are the origin and the eight vertices of $V$.

Because $0\in V$, its lattice-point difference group is generated by $e_1,e_2,e_3,(-1,-1,-1,2)$. Every element has even fourth
coordinate, and
\[2e_4=(-1,-1,-1,2)+e_1+e_2+e_3.
\]
Thus the group is exactly $\Z^3\times2\Z$, which has index two in $\Z^4$. It follows that $V$ is not spanning.
In contrast, direct substitution into the inequalities for $V^*$ shows that it contains
$0$, $e_1$, $e_2$, $e_3$, and $(1,0,0,1)$.
The differences of these points generate $\Z^4$, since $(1,0,0,1)-e_1=e_4$. Hence $V^*$ is spanning, and its polar
$V$ is not. This proves failure of spanning preservation.
\end{proof}

\section{Integral dilations}\label{sec:dilation}

For a positive integer $r$, the \emph{integral dilation} of a lattice $d$-polytope $P\subset\R^d$ is
$rP=\{rx:x\in P\}$. It has the same dimension as $P$. The dilation of the set $P$ should be distinguished from scaling
its lattice points: the inclusion $r(P\cap\Z^d)\subseteq rP\cap\Z^d$ is usually strict.
For example, $2[0,1]$ contains the lattice point $1$.

We use $c(P)=r_P=d+1-s_P$ in this section to avoid confusing the codegree with the dilation factor $r$.

\subsection{Ehrhart positivity and magic positivity}

For every nonnegative integer $n$ we have $n(rP)=(rn)P$, and therefore $L_{rP}(n)=L_P(rn)$.
If $L_P(n)=\sum_{j=0}^d c_jn^j$, this becomes
\[ L_{rP}(n)=\sum_{j=0}^d r^jc_jn^j.
\]
Therefore, $rP$ is Ehrhart positive if and only if $P$ is Ehrhart positive.

\begin{theorem}
Integral dilations preserve magic positivity.
\end{theorem}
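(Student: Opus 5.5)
The plan is to show that if $L_P(n)=\sum_{i=0}^d b_i n^i(n+1)^{d-i}$ with all $b_i\ge0$, then $L_{rP}(n)=L_P(rn)$ has a nonnegative expansion in the same basis. Substituting gives
\[L_{rP}(n)=\sum_{i=0}^d b_i\,(rn)^i(rn+1)^{d-i}.\]
Since the magic basis is closed under products (the same observation used for Cartesian products in \cref{sec:product}), and the magic coefficients of a nonnegative combination are the corresponding combinations of coefficients, it suffices to show that each factor $rn$ and $rn+1$ is a nonnegative combination of $n$ and $n+1$. Expanding the product then writes each term $(rn)^i(rn+1)^{d-i}$ as a nonnegative combination of the monomials $n^j(n+1)^{d-j}$, because every factor is a linear form in $n$ and $n+1$. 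Uniqueness of the magic expansion will then identify these as the magic coefficients of $L_{rP}$.

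The key linear identities are
\[rn = r\cdot n,\qquad rn+1=(n+1)+(r-1)n,\]
both with nonnegative coefficients because $r\ge1$. Hence
\[(rn)^i(rn+1)^{d-i}=r^i n^i\sum_{k=0}^{d-i}\binom{d-i}{k}(r-1)^k n^k(n+1)^{d-i-k}=\sum_{k=0}^{d-i}\binom{d-i}{k}r^i(r-1)^k\,n^{i+k}(n+1)^{d-i-k}.\]
Each term is homogeneous of total degree $d$ in the pair $(n,n+1)$, which is exactly the form required. Summing against the $b_i\ge0$, the new magic coefficients come out as
\[b'_j=\sum_{i\le j} b_i\,r^i(r-1)^{j-i}\binom{d-i}{j-i}\ \ge 0.\]

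There is essentially no obstacle here. The only points to check are that the degree $d$ of the basis is unchanged, which holds because $\dim(rP)=\dim P$, and that the homogenization is done in $(n,n+1)$ of total degree exactly $d$, so the expansion lands in the correct basis. This is the same mechanism that gave $7n+1=(n+1)+6n$ in \cref{ex:pyramid-negative}.
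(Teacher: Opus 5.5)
Your proof is correct and is essentially the paper's argument: substitute $L_{rP}(n)=L_P(rn)$, rewrite $rn+1=(n+1)+(r-1)n$, and expand binomially. Your coefficient formula $b'_j=\sum_{i\le j} b_i\,r^i(r-1)^{j-i}\binom{d-i}{j-i}$ coincides with the paper's, and uniqueness of the degree-$d$ magic expansion finishes the argument, as you note.
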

\begin{proof}
Suppose $L_P(n)=\sum_{j=0}^d a_jn^j(n+1)^{d-j}$ with $a_j\geq0$. For each $j$, we have 
\begin{align*}
(rn)^j(rn+1)^{d-j}=r^jn^j\bigl((n+1)+(r-1)n\bigr)^{d-j}
=\sum_{k=j}^d r^j\binom{d-j}{k-j}(r-1)^{k-j}n^k(n+1)^{d-k}.
\end{align*}
By summing over $j$, we obtain
\[L_{rP}(n)=\sum_{k=0}^d\left(\sum_{j=0}^ka_jr^j\binom{d-j}{k-j}(r-1)^{k-j}\right)n^k(n+1)^{d-k}.
\]
Every coefficient in parentheses is nonnegative because
$r\geq1$. This is the required expansion in the degree-$d$
magic basis.
\end{proof}

\subsection{Coefficient properties of $h^*$-polynomial}\label{Subsection-veronese}

For a characterization of the $h_{rP}^*$-polynomial, we need the following definitions and notation.
For a polynomial or formal power series $f(z)=\sum_{m\geq0}b_mz^m$, define
\[f^{\langle r,0\rangle}(z)=\sum_{n\geq0}b_{rn}z^n
 \qquad \text{and}\qquad B_r(z)=1+z+\cdots+z^{r-1}.
\]
Thus extraction changes the exponents $rn$ to $n$.

We define the zero-offset \emph{Veronese operator} by
\[ U_r^{d+1}h(z) = \bigl(B_r(z)^{d+1}h(z)\bigr)^{\langle r,0\rangle}.
\]
Note that the superscript $d+1$ corresponds to the exponent of the denominator in the rational generating function, where $d$ denotes the dimension of the underlying polytope.
For some results on the Veronese operator, we refer to \cite{Beck-Stapledon,BrentiWelker}.

\begin{proposition}\label{prop:dilation-formula}
For every integer $r\geq1$, we have 
\begin{equation}\label{eq:dilation-extraction}
 h_{rP}^*(z)=U_r^{d+1}h_P^*(z)
 =\bigl(B_r(z)^{d+1}h_P^*(z)\bigr)^{\langle r,0\rangle}.
\end{equation}
Moreover,
\begin{equation}\label{eq:dilation-degree}
 c(rP)=\left\lceil\frac{c(P)}r\right\rceil,
 \qquad
 \deg h_{rP}^*=d+1-\left\lceil\frac{d+1-s_P}r\right\rceil.
\end{equation}
\end{proposition}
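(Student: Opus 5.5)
For the first identity I would work at the level of Ehrhart series. Since $L_{rP}(n)=L_P(rn)$, we have $\Ehr_{rP}(z)=\bigl(\Ehr_P(z)\bigr)^{\langle r,0\rangle}$. Next I would rewrite the denominator of $\Ehr_P$ using $1-z^r=(1-z)B_r(z)$:
\[\Ehr_P(z)=\frac{h_P^*(z)}{(1-z)^{d+1}}=\frac{B_r(z)^{d+1}h_P^*(z)}{(1-z^r)^{d+1}}.\]
The denominator is now a series in $z^r$. Extracting the coefficients of the exponents $rn$ therefore acts only on the numerator and turns $z^r$ into $z$ in the denominator. This gives
\[\Ehr_{rP}(z)=\frac{\bigl(B_r^{d+1}h_P^*\bigr)^{\langle r,0\rangle}(z)}{(1-z)^{d+1}}.\]
The numerator has degree at most $\lfloor ((r-1)(d+1)+s_P)/r\rfloor\le d$, because $s_P\le d$. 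Since $\dim rP=d$, uniqueness of the $h^*$-polynomial yields \eqref{eq:dilation-extraction}.

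For the codegree I would argue geometrically rather than through the operator. By definition, $c(rP)$ is the least $n\ge1$ with $\relint(nrP)\cap\Z^d\neq\varnothing$. I would use the standard fact that $\relint(mP)$ contains a lattice point if and only if $m\ge c(P)$. This holds because the Ehrhart--Macdonald reciprocity gives $|\relint(mP)\cap\Z^d|=(-1)^dL_P(-m)=\sum_i h_i^*\binom{m-1+i}{d}$. This sum is nonzero exactly when some $i$ with $h_i^*>0$ has $m-1+i\ge d$. That condition is equivalent to $m-1+s_P\ge d$, i.e.\ $m\ge c(P)$, using nonnegativity of the $h^*$-coefficients. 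Hence $c(rP)=\min\{n\ge1: rn\ge c(P)\}=\lceil c(P)/r\rceil$. The degree formula then follows from $\deg h_{rP}^*=d+1-c(rP)$.

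The main obstacle is the monotonicity claim that interior points persist for all $m\ge c(P)$. Without it, the minimum over multiples of $r$ need not be $\lceil c(P)/r\rceil$. The reciprocity formula above is how I would handle it. I would check the binomial convention $\binom{k}{d}=0$ for $0\le k<d$ with care, and treat $d=0$ separately: there $c=1$ and $h^*=1$, so the statement is trivial.

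As a sanity check, I would also verify the degree directly from the operator. The top possible exponent in $B_r^{d+1}h_P^*$ is $(r-1)(d+1)+s_P$, and every coefficient up to it is positive near the top. Taking the largest multiple of $r$ not exceeding it gives $d+1-\lceil (d+1-s_P)/r\rceil$, which agrees with the geometric argument.
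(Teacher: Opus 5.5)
Your proof is correct. It differs from the paper's mainly in how you show that interior lattice points persist in all dilates $mP$ with $m\geq c(P)$.

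For the extraction identity, the paper only cites the Veronese formula of Jochemko and Zhang. You derive it from $1-z^r=(1-z)B_r(z)$ together with the rule $\bigl(A(z)G(z^r)\bigr)^{\langle r,0\rangle}=A^{\langle r,0\rangle}(z)G(z)$. This is the standard argument, and it makes your proof self-contained.

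For the codegree, the paper argues geometrically. If $m$ is an interior lattice point of $c(P)P$ and $v\in P\cap\Z^d$, then $m+(k-c(P))v$ is an interior lattice point of $c(P)P+(k-c(P))P=kP$ for every $k\geq c(P)$. This needs only convexity. However, to reach the degree formula it relies on the identity $c(P)=d+1-s_P=\min\{n\geq1:\relint(nP)\cap\Z^d\neq\varnothing\}$, which is quoted from the introduction. Your route uses heavier tools: Ehrhart--Macdonald reciprocity in the form $(-1)^dL_P(-m)=\sum_i h_i^*\binom{m-1+i}{d}$, and Stanley's nonnegativity. In exchange, it proves that identity and the monotonicity at the same time.

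Your ``sanity check'' is in fact a complete, purely algebraic proof of the degree formula from the extraction identity alone. To make ``positive near the top'' precise, observe that $h^*_{s_P}z^{s_P}B_r(z)^{d+1}$ already has positive coefficients at every index in $[s_P,\,s_P+(r-1)(d+1)]$. All other contributions are nonnegative. This interval contains $(r-1)(d+1)+1\geq r$ consecutive integers, so the largest multiple of $r$ not exceeding $s_P+(r-1)(d+1)$ carries a positive coefficient.
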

\begin{proof}
For \eqref{eq:dilation-extraction}, this is the standard Veronese formula in \cite[Lemma~3.1]{Jochemko} and
\cite[Section~1]{Zhang}.
For the codegree assertion, set $m\in\operatorname{int}(c(P)P)\cap\Z^d$ and $v\in P\cap\Z^d$. 
If $k\geq c(P)$, then
\[
 m+(k-c(P))v\in\operatorname{int}(kP)\cap\Z^d.
\]
Indeed, since $m$ belongs to the interior of $c(P)P$, there exists an open ball centered at $m$ that is entirely contained within $c(P)P$. Translating this ball by the vector $(k-c(P))v \in (k-c(P))P$ yields an open neighborhood of $m+(k-c(P))v$ that lies within the Minkowski sum $c(P)P + (k-c(P))P = kP$.
Thus $kP$ has an interior lattice point for every integer $k\geq c(P)$, and none for $1\leq k<c(P)$ by the definition of codegree.
It follows that $n(rP)$ has an interior lattice point exactly when $rn\geq c(P)$. The smallest positive integer $n$ with this property is $\lceil c(P)/r\rceil$. The degree formula now follows from $\deg h_{rP}^*=d+1-c(rP)$.
\end{proof}

By \eqref{eq:conversion}, we also have 
\[h_i^*(rP)=\sum_{j=0}^i(-1)^j\binom{d+1}{j}\sum_{k=0}^{d}h_k^*(P) \binom{r(i-j)+d-k}{d},\qquad 0\leq i\leq d.
\]

\begin{lemma}\label{lem:uniform-unimodal}
If $h(z)$ is a polynomial with nonnegative and unimodal coefficients, then for any integer $r \ge 1$, the product $B_r(z)h(z)$ is unimodal.
\end{lemma}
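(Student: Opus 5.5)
Since $B_r(z)=\sum_{j=0}^{r-1}z^j$, the product $B_r(z)h(z)$ has coefficients given by sliding window sums. I will work with these sums directly and show that their differences change sign at most once, from nonnegative to nonpositive.

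Write $h(z)=\sum_{i=0}^s a_iz^i$ with $a_i\geq0$, and extend by $a_i=0$ for $i<0$ and $i>s$. Then
\[
c_m:=[z^m]B_r(z)h(z)=\sum_{i=m-r+1}^{m}a_i,\qquad 0\leq m\leq s+r-1 .
\]
Consecutive coefficients differ by a telescoping term:
\[
c_{m+1}-c_m=a_{m+1}-a_{m+1-r}.
\]
Unimodality of $(c_m)$ then reduces to a single claim: there is an index $M$ such that $a_{m+1}\geq a_{m+1-r}$ for $m<M$ and $a_{m+1}\leq a_{m+1-r}$ for $m\geq M$.

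Fix a peak index $k$ of $h$, so that $a_0\leq\cdots\leq a_k\geq\cdots\geq a_s$ holds in the index range $0\leq i\leq s$. Because of the zero padding, the extended sequence on all of $\Z$ is still weakly increasing up to $k$ and weakly decreasing after $k$. This holds because every $a_i$ is nonnegative.

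I now compare $a_j$ with $a_{j-r}$, where $j=m+1$.
\begin{enumerate}
\item If $j\leq k$, both indices lie in the increasing part, so $a_j\geq a_{j-r}$ and the difference is nonnegative.
\item If $j-r\geq k$, both indices lie in the decreasing part, so $a_j\leq a_{j-r}$ and the difference is nonpositive.
\item In the window $k<j<k+r$, the index $j$ lies in the decreasing part and $j-r$ lies in the increasing part. As $j$ increases, $a_j$ weakly decreases and $a_{j-r}$ weakly increases, so $a_j-a_{j-r}$ is weakly decreasing in $j$.
\end{enumerate}
The sign pattern in the window therefore switches at most once, from nonnegative to negative. Together with the first two cases, the differences over all $j$ are nonnegative up to some threshold and nonpositive afterwards.

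This sign pattern gives unimodality of $(c_m)$. Note that zero differences inside the window may appear on either side of the switch. This is harmless, since unimodality only requires weak inequalities, and I take the threshold at the first strictly negative difference.

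The main point of care is the boundary: the padding convention must keep the extended sequence monotone on each side of $k$. This is exactly where nonnegativity of the coefficients is used. Apart from that, the argument is elementary and avoids any log-concavity.
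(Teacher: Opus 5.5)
Your proposal is correct and follows essentially the same route as the paper. Both arguments use the telescoping identity $c_{m+1}-c_m=a_{m+1}-a_{m+1-r}$ for the window sums, followed by the same three-range analysis around a peak index, with the intermediate window $k<j<k+r$ being where the difference is weakly decreasing; your explicit treatment of the zero padding via nonnegativity, and of the threshold at the first strictly negative difference, just spells out points the paper states briefly.
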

\begin{proof}
The assertion is trivial for $r=1$. For $r \ge 2$, let $h(z) = \sum a_i z^i$ by setting $a_i=0$ outside its support.  Suppose the coefficient sequence achieves a peak at $m$. The $i$-th coefficient of the product $B_r(z)h(z)$ is given by $b_i = \sum_{j=0}^{r-1} a_{i-j}$, which yields the difference
$$b_i - b_{i-1} = a_i - a_{i-r}.$$

We determine the behavior of $b_i - b_{i-1}$ by examining three ranges of indices. When $i \le m$, both $a_i$ and $a_{i-r}$ are situated on the nondecreasing segment of the sequence (or preceding the support), ensuring $b_i - b_{i-1} \ge 0$. Conversely, when $i \ge m+r$, both terms lie on the nonincreasing segment (or beyond the support), guaranteeing $b_i - b_{i-1} \le 0$. In the intervening range $m < i < m+r$, the term $a_i$ is nonincreasing as $i$ grows, while $a_{i-r}$ is nondecreasing. Hence, the difference $a_i - a_{i-r}$ is monotonically nonincreasing across this interval.

It follows that the sequence of first differences $(b_i - b_{i-1})$ undergoes at most one sign change, passing exclusively from nonnegative to nonpositive. This establishes the unimodality of the product.
\end{proof}

We also use the classical convolution theorem: if two finite nonnegative sequences are log-concave and have no internal zeros, then their convolution is also log-concave and has no internal zeros; see \cite{StanleyLC}. 
In polynomial language, their product is log-concave and has no internal zeros. The coefficient sequence of $B_r(z)$ consists of $r$ consecutive ones and satisfies these hypotheses.

\begin{theorem}\label{thm:dilation-unimodal-lc}
For every integer $r\geq1$, integral dilation preserves unimodality and log-concavity of $h^*$-polynomials, with the no-internal-zeros convention for log-concavity.
\end{theorem}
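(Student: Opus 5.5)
By \cref{prop:dilation-formula}, $h_{rP}^*(z)$ is obtained from $B_r(z)^{d+1}h_P^*(z)$ by extracting the coefficients at exponents $0,r,2r,\ldots$. The proof therefore splits into two steps. The first shows that multiplication by $B_r(z)^{d+1}$ preserves the property. The second shows that the extraction $(\cdot)^{\langle r,0\rangle}$ preserves it as well. For $r=1$ there is nothing to prove, so we assume $r\geq2$.

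For the product step we iterate. If $h_P^*$ is unimodal with nonnegative coefficients, then \cref{lem:uniform-unimodal} applied $d+1$ times shows that $g(z)=B_r(z)^{d+1}h_P^*(z)$ is unimodal; each intermediate product is again nonnegative and unimodal, so the lemma applies at every stage. If instead $h_P^*$ is log-concave with no internal zeros, we use the convolution theorem recalled just before the statement. The coefficient sequence of $B_r(z)$ is log-concave without internal zeros, so $g$ is again log-concave and has no internal zeros. In both cases $g$ has nonnegative coefficients and constant term $1$.

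For the extraction step, write $g(z)=\sum_m b_mz^m$. The coefficients of $h_{rP}^*$ are $b_0,b_r,b_{2r},\ldots$, truncated at $\deg h_{rP}^*$. Note that $b_{rn}=0$ for $n>d$, since $h_{rP}^*$ has degree at most $d$.
\begin{itemize}
\item \emph{Unimodality.} A subsequence of a unimodal sequence taken along an arithmetic progression of indices is unimodal. If $b$ increases up to index $k$ and decreases afterwards, then $(b_{rn})$ increases for $rn\leq k$ and decreases for $rn\geq k$.
\item \emph{Log-concavity, no internal zeros.} Since $b$ has no internal zeros, its support is an interval $[0,M]$, so $(b_{rn})$ is positive exactly for $0\leq rn\leq M$ and has no internal zeros. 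For log-concavity, a positive log-concave sequence on an interval satisfies $b_{i}b_{j}\geq b_{i-1}b_{j+1}$ whenever $i\leq j$. This follows by telescoping the ratios: $b_{i}/b_{i-1}\geq b_{i+1}/b_i\geq\cdots\geq b_{j+1}/b_j$. Iterating this inequality gives $b_{rn}^2\geq b_{rn-r}b_{rn+r}$, because $b_{rn}/b_{rn-r}$ is a product of $r$ consecutive ratios, each at least the corresponding ratio in $b_{rn+r}/b_{rn}$. When $b_{rn+r}=0$ the inequality is trivial.
\end{itemize}
We then check that the truncation at degree $\deg h_{rP}^*$ is consistent with this support. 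The extracted coefficients vanish exactly beyond $\lfloor M/r\rfloor$, and this index equals $\deg h_{rP}^*$, so no internal zeros are introduced.

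The main obstacle is the log-concavity of the extracted subsequence, which rests on the ratio-monotonicity argument above. We must be careful at the boundary of the support, where zeros make the ratios undefined. We handle this by restricting the ratio argument to the interval $[0,M]$ and treating indices outside it separately. The unimodal case is routine by comparison.
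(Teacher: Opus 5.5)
Your proposal is correct and follows the paper's proof essentially step for step. Like the paper, you obtain the product step from \cref{lem:uniform-unimodal} iterated $d+1$ times (unimodal case) or from the log-concave convolution theorem (log-concave case), and you get the extraction step by comparing the $r$ consecutive ratios making up $b_{rn}/b_{rn-r}$ with those making up $b_{rn+r}/b_{rn}$; your explicit treatment of the support $[0,M]$ and of the case $b_{rn+r}=0$ spells out what the paper covers with the phrase ``whenever all three selected indices occur.''
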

\begin{proof}
Suppose first that $h_P^*(z)$ is unimodal. Repeated application of \cref{lem:uniform-unimodal} shows that $B_r^{d+1}h_P^*(z)$ is unimodal. It is clear that taking the coefficients at indices $0,r,2r,\ldots$ preserves unimodality.
According to \eqref{eq:dilation-extraction}, we prove the unimodality of $h_{rP}^*(z)$.

Now suppose that $h_P^*(z)$ is log-concave with no internal zeros. The convolution theorem~\cite{StanleyLC}, applied successively to the $d+1$ factors $B_r(z)$, shows that $B_r^{d+1}(z)h_P^*(z)$ has a log-concave coefficient sequence $b_0,\ldots,b_N$ with every coefficient positive. 
Log-concavity is equivalent to the ratios $b_i/b_{i-1}$ being nonincreasing. Whenever all three selected indices occur, we have
\[\frac{b_{ri}}{b_{r(i-1)}}=\prod_{j=1}^r\frac{b_{r(i-1)+j}}{b_{r(i-1)+j-1}}\geq\prod_{j=1}^r\frac{b_{ri+j}}{b_{ri+j-1}}
 =\frac{b_{r(i+1)}}{b_{ri}}.
\]
Therefore, we have  $b_{ri}^2\geq b_{r(i-1)}b_{r(i+1)}$.
By \eqref{eq:dilation-extraction}, we prove the log-concavity assertion.
\end{proof}

In order to prove the result concerning real-rootedness, we need the following lemma.

\begin{lemma}{\em (Zhang; \cite[Corollary~3.4, Corollary~3.3]{Zhang})}\label{thm:zhang-veronese}
Let $h(z)$ be a nonzero polynomial with nonnegative coefficients whose zeros are all negative and real. For any positive integers $r,N$, the polynomial
\[ \bigl(B_r(z)^N h(z)\bigr)^{\langle r,0\rangle}
\]
is real-rooted. Furthermore, the same conclusion holds when $h(z)$ is log-concave with no internal zeros and $\deg h(z)<2r$.
\end{lemma}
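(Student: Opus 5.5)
Zhang's lemma is cited from the literature, so our proof amounts to indicating why the cited statements give exactly this formulation. We would use the standard theory of Veronese-type operators acting on real-rooted polynomials and proceed in three steps.

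\emph{Step 1 (decomposition).} Write $\bigl(B_r(z)^Nh(z)\bigr)^{\langle r,0\rangle}$ through the operators $U_r^{N}$. For $h(z)=\sum_k a_kz^k$, linearity gives
\[\bigl(B_r^N h\bigr)^{\langle r,0\rangle}=\sum_k a_k\,\bigl(z^kB_r^N\bigr)^{\langle r,0\rangle}.\]
Each summand is a shifted Veronese extraction: with $k=qr+i$ and $0\leq i<r$, it equals $z^q\bigl(B_r^N\bigr)^{\langle r,r-i\rangle}$-type pieces, i.e.\ the coefficients of $B_r^N$ at indices $\equiv -i \pmod r$. We would invoke the theorem of Brenti--Welker and Jochemko that the family $\bigl\{(B_r^N z^i)^{\langle r,0\rangle}\bigr\}_{0\leq i<r}$ consists of real-rooted polynomials with nonnegative coefficients. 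After the shifts $z^q$ are taken into account, this family is \emph{interlacing} (a compatible family), provided $N\geq$ the relevant degree. Where that provision fails, one first multiplies further by $B_r$, which does not change the zero-location argument.

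\emph{Step 2 (main case).} For $h$ with only real negative zeros, factor $h=c\prod_j(z+\alpha_j)$ with $\alpha_j>0$ and induct on $\deg h$. The key claim is that the linear map $g\mapsto (B_r^N g)^{\langle r,0\rangle}$ sends real-rooted polynomials with nonnegative coefficients to real-rooted polynomials. This follows by showing that multiplication by $(z+\alpha)$ preserves a suitable interlacing vector $\bigl(g^{\langle r,i\rangle}\bigr)_i$ of the $r$ residue-class parts, in the sense of Fisk/Brändén's compatible sequences. The sequence $(B_r^N)^{\langle r,i\rangle}$ is interlacing by the same argument used for Veronese sections of $1/(1-z)^N$. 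Multiplying by $z+\alpha$ cyclically shifts the parts and adds a positive multiple, which preserves interlacing. Reading off the $0$-th component then gives real-rootedness. This is Zhang's Corollary~3.4.

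\emph{Step 3 (low-degree case).} When $h$ is log-concave without internal zeros and $\deg h<2r$, each residue-class part $(h)^{\langle r,i\rangle}$ has degree at most $1$. One then checks the interlacing of the vector of parts directly: for linear polynomials with positive coefficients, interlacing reduces to $2\times2$ determinant inequalities $a_ia_{j+r}\leq a_{i+r}a_j$ for suitable index orders. These inequalities follow from log-concavity, which is Zhang's Corollary~3.3. The main obstacle is this Step~3 verification, namely matching the cyclic ordering of the parts so that exactly the log-concavity inequalities are needed. I would check the case $r=2$ first.
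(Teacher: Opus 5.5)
The paper does not prove this lemma; it only quotes Zhang's Corollaries~3.4 and~3.3, so your sketch has to stand on its own. Steps~2 and~3 do stand. They are the interlacing-sequence argument behind those corollaries, and they work for every $N$ once the bookkeeping is fixed.

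Write $f=\sum_{i=0}^{r-1}z^if_i(z^r)$ with $f_i=f^{\langle r,i\rangle}$. Multiplication by $z+\alpha$ gives $g_0=\alpha f_0+wf_{r-1}$ and $g_i=\alpha f_i+f_{i-1}$ for $i\ge1$; note that the wrapped part acquires the factor $w$, which your ``cyclic shift'' omits. Multiplication by $B_r$ gives $g_m=\sum_{i\le m}f_i+w\sum_{i>m}f_i$.

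The invariant to carry is that $(f_{r-1},\ldots,f_1,f_0)$ is an interlacing sequence, with the largest zeros in $f_0$. For instance, for $(1+z)^3$ and $r=2$ one has $f_0=1+3w$ and $f_1=3+w$, and the reverse order is not interlacing.
\begin{itemize}
\item Multiplication by $z+\alpha$ preserves the invariant. The extended sequence $(f_{r-1},\ldots,f_0,wf_{r-1})$ is still interlacing, and $(g_{r-1},\ldots,g_0)$ arises from it through a bidiagonal matrix with entries $\alpha,1$, all of whose $2\times2$ minors are nonnegative.
\item Multiplication by $B_r$ preserves it by the Savage--Visontai/Br\"and\'en lemma for $H_k=w\sum_{j<k}F_j+\sum_{j\ge k}F_j$.
\end{itemize}
Starting from $(0,\ldots,0,1)$, the section vector of $f=1$, gives the first assertion with no condition on $N$. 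In Step~3 you should say explicitly that you start instead from the section vector of $h$ and push it through the $N$ factors $B_r$ by the same lemma. The initial condition is then exactly $a_ia_{j+r}\le a_ja_{i+r}$ for $i<j$. Log-concavity without internal zeros provides this, including the degenerate cases where a section is constant or zero.

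Step~1, by contrast, should be deleted: nothing later uses it, and as written it is wrong. If the shifted family $\{(z^kB_r^N)^{\langle r,0\rangle}\}_{k\ge0}$ were compatible, then $(B_r^Nh)^{\langle r,0\rangle}$ would be real-rooted for every $h$ with nonnegative coefficients. This fails for $r=1$, where the section is $h$ itself. It also fails for $r=2$, $N=1$, $h=1+z^4$, where the section is $1+w^2$. Such compatibility holds only for a bounded range of shifts, which is why Jochemko-type results need $r\ge\deg h$; that hypothesis differs from both of the lemma's.

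Your fallback of multiplying by further factors $B_r$ replaces $N$ by a larger exponent. It therefore studies a different polynomial and cannot establish anything about $(B_r^Nh)^{\langle r,0\rangle}$ itself.
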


\begin{corollary}\label{cor:dilation-rr}
If $h_P^*(z)$ is real-rooted, then $h_{rP}^*(z)$ is real-rooted for every integer $r\geq1$.
If $h_P^*(z)$ is log-concave with no internal zeros and $s_P<2r$, then $h_{rP}^*(z)$ is real-rooted.
\end{corollary}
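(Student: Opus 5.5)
The corollary follows directly from \cref{prop:dilation-formula} combined with \cref{thm:zhang-veronese}. By \eqref{eq:dilation-extraction}, we have $h_{rP}^*(z)=\bigl(B_r(z)^{d+1}h_P^*(z)\bigr)^{\langle r,0\rangle}$. This is exactly the Veronese-type extraction in \cref{thm:zhang-veronese}, with $h=h_P^*$ and $N=d+1\geq1$. So the plan is to check that $h_P^*$ satisfies the hypotheses of that lemma in each of the two cases.

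For the first assertion, I use that $h_P^*(z)$ is nonzero with $h_0^*(P)=1$ and has nonnegative coefficients by Stanley's theorem. Suppose $h_P^*$ is real-rooted. Then its real zeros are nonpositive, since a polynomial with nonnegative coefficients and positive constant term has no zero in $[0,\infty)$. Because the constant term is $1$, zero is not a root either. Hence all zeros are negative and real. In the constant case $h_P^*=1$, there are no zeros and the hypothesis holds vacuously. The first part of \cref{thm:zhang-veronese} then gives real-rootedness of $h_{rP}^*$.

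For the second assertion, $h_P^*$ is log-concave with no internal zeros and has degree $s_P<2r$. These are precisely the hypotheses of the second part of \cref{thm:zhang-veronese}, again with $N=d+1$. So $h_{rP}^*$ is real-rooted.

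The only point needing care is matching conventions. Zhang's statement uses the same zero-offset extraction $f^{\langle r,0\rangle}$ and an arbitrary positive exponent $N$. We must ensure the lemma does not require $N$ to be related to $\deg h$, for instance $N\geq\deg h$. Since $\deg h_P^*=s_P\leq d<d+1=N$, even such a restriction would be satisfied, so no obstacle arises.
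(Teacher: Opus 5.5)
Your proposal is correct and is the paper's argument: substitute the Veronese formula \eqref{eq:dilation-extraction} into \cref{thm:zhang-veronese} with $N=d+1$, using the first part for real-rooted $h_P^*$ and the second part for the log-concave case with $s_P<2r$. Your extra check that the zeros of $h_P^*$ are negative (nonnegative coefficients, constant term $1$) is a detail the paper leaves implicit, and it is correct.
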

\begin{proof}
According to \eqref{eq:dilation-extraction} and \cref{thm:zhang-veronese} with $N=d+1$, we prove the first assertion.
The second assertion follows from the second assertion of \cref{thm:zhang-veronese} with the same value of $N$.
\end{proof}

For completeness, we mention the following result of Jochemko

\begin{proposition}{\em (Jochemko \cite[Corollary~4.4]{Jochemko})}\label{Theorem-Jochem}
For a lattice $d$-polytope $P$ with $s_P=\deg h_P^*$, the polynomial $h_{rP}^*(z)$ has only negative real zeros for every
positive integer $r\geq s_P$. All its zeros are distinct when $r\geq\min\{s_P+1,d\}$.
\end{proposition}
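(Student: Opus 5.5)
The plan is to reduce both assertions to statements about the Veronese formula \eqref{eq:dilation-extraction}, $h_{rP}^*(z)=\bigl(B_r(z)^{d+1}h_P^*(z)\bigr)^{\langle r,0\rangle}$. Write $h=h_P^*$ with $s=s_P=\deg h$. Since $h$ has nonnegative coefficients and $h(0)=1$, the product $B_r^{d+1}h$ has degree $(r-1)(d+1)+s$ and positive coefficients on its whole support. By \cref{prop:dilation-formula}, the extracted polynomial has degree $d+1-\lceil (d+1-s)/r\rceil$, which is at most $d$.

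For real-rootedness I would use the standard decomposition of the Veronese operator into interlacing pieces, following the approach of Beck--Stapledon and Brenti--Welker. Write
\[
h(z)=\sum_{i=0}^{r-1} z^i\, h_i(z^r),
\]
and observe that the operator $f\mapsto (B_r f)^{\langle r,0\rangle}$ acts linearly on the vector $(h_0,\dots,h_{r-1})$ through an explicit $r\times r$ matrix with entries in $\{1,z\}$.

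The strategy has three steps.
\begin{enumerate}
\item Show that when $r\ge s$ each $h_i$ is at most linear, except that $h_0$ is quadratic when $r=s$. Hence the initial family is interlacing: the only constraint is on the roots of linear forms with nonnegative coefficients, and $h_i(0)\ge0$ (for $i\ge 1$ it is a single monomial).
\item Apply the $(d+1)$-fold iteration of the matrix. Each factor preserves interlacing families of polynomials with nonnegative coefficients, by the Brändén / Fisk criterion for matrices of $0$--$1$ and $z$ entries in staircase form.
\item Conclude that the first component, which is exactly $h_{rP}^*$, is real-rooted with nonpositive zeros.
\end{enumerate}
Zeros are strictly negative because the constant term is $1$.

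\textbf{Main obstacle.} The delicate point is the base case: producing an interlacing family from an arbitrary $h$ when only $r\ge s$ is assumed. When $r\ge s+1$ every $h_i$ is a constant or a single monomial, and interlacing is trivial. When $r=s$ the component $h_0=1+h_s z$ is linear in $z$ after substitution, so again each piece has at most one root. I would first check this carefully against Jochemko's Corollary~4.4 formulation.

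\textbf{Distinct zeros.} For $r\ge\min\{s+1,d\}$ I would upgrade interlacing to strict interlacing. With $r\ge s+1$ the initial family consists of monomials with distinct supports, which strictly interlace. Each matrix step with all entries positive preserves strict interlacing, so the final polynomial has simple roots. For the case $r\ge d$ (but possibly $r=s\le d$) I would use the degree bound: $h_{rP}^*$ has degree at most $d\le r$. Strict interlacing then survives after at least one application, since $d+1\ge 2$ applications are performed, and a repeated root would force a common root of consecutive members that the positive-entry step rules out.
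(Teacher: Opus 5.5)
The paper gives no proof of this statement; it only cites Jochemko. Your real-rootedness half is sound. With $h=\sum_{i<r}z^ih_i(z^r)$, multiplication by $B_r$ acts on the pieces by $g_k=\sum_{j\le k}f_j+z\sum_{j>k}f_j$. After reversing the index order this is the staircase map $z\sum_{j<k}+\sum_{j\ge k}$, which preserves interlacing families with nonnegative coefficients. For $r\ge s$ the reversed initial family $(h_{r-1},\dots,h_1,h_0)$ consists of constants followed by $h_0=a_0+a_sz$, and it interlaces in the required direction. Note that $h_0$ is linear, not quadratic as you first wrote.

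\textbf{The gap is the distinct-zeros half.}

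\emph{The branch $r\ge s+1$.} Here the pieces $h_i$ are the constants $a_0,\dots,a_s,0,\dots,0$, not monomials with distinct supports, and some of them may vanish. Strict interlacing is also not preserved step by step. Since $g_k-g_{k-1}=(1-z)f_k$, after the first step $g_{k-1}=g_k$ whenever $a_k=0$, and $g_k=h(1)$ for every $k\ge s$. Moreover, the matrix entry $z$ is negative on the negative axis, so there is no ``positive-entry'' principle to invoke. What must be proved is that strictness \emph{emerges} after several steps. One route is to track how a common zero $\xi<0$ of $g_{k-1}$ and $g_k$ forces $f_k(\xi)=0$, and to propagate this back to the initial constants. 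None of this is in your proposal.

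\emph{The branch $r\ge d$.} This branch only adds the case $r=s=d$, and your proposal contains no argument for it. It also cannot be obtained from the Veronese/interlacing machinery alone, because the conclusion fails for general nonnegative $h$ with $h(0)=1$ and $\deg h\le d$. For example, $h=1+3z^2$ with $d=r=2$ gives $(B_2^3h)^{\langle 2,0\rangle}=1+6z+9z^2=(1+3z)^2$. This $h$ is not the $h^*$-polynomial of a lattice polygon: $h_2^*\ge1$ forces an interior lattice point, hence $h_1^*\ge1$. So some polytope-specific input of this kind is indispensable for $r=s=d$.

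\emph{Why the hypothesis must be used.} The hypothesis genuinely matters when $r=s<d$. Take $P=\pyr^2(R_2)$, built from the Reeve tetrahedron, so $h_P^*=1+z^2$ and $d=5$. Then
$h_{2P}^*=1+16z+30z^2+16z^3+z^4=(1+z)^2(1+14z+z^2)$.
In your framework the initial vector is $(1+z)e_0$. The factor $1+z$ is carried through every step and collides with the root $-1$ of $h^*_{2\Delta_5}=1+15z+15z^2+z^3$. Any correct proof of simplicity must therefore exploit either $r\ge s+1$, or $r=s=d$ together with geometric information on $h_P^*$, in an essential way. Your sketch does neither.
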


\cref{Theorem-Jochem} demonstrates that arbitrary polynomials eventually acquire real-rootedness under sufficiently large dilations. In contrast, \cref{cor:dilation-rr} establishes that if the initial polynomial $h_P^*(z)$ is already real-rooted, this property is preserved for all positive integer dilations.

\begin{example}
Integral dilations do not preserve the symmetry and $\gamma$-positivity of the $h^*$-polynomial.
The unit interval $P=[0,1]$ has $h_P^*(z)=1$. For $r=3$, we have $L_{3P}(n)=3n+1$ and $h_{3P}^*(z)=1+2z$.
This is not symmetric in its actual degree and is not $\gamma$-positive. 
\end{example}

\subsection{Ultra log-concavity}

We know that any real-rooted polynomial with nonnegative coefficients belongs to $\ULC(s)$. Consequently, \cref{cor:dilation-rr} ensures that every dilation of a real-rooted polynomial satisfies the ultra log-concave condition with respect to its actual degree. Furthermore, the corollary yields this same actual-degree ultra log-concave property for log-concave polynomials, provided that $s_P < 2r$. For arbitrary lattice $d$-polytopes, Jochemko's theorem (\cref{Theorem-Jochem}) guarantees this conclusion whenever the dilation factor satisfies $r \ge s_P$.

\begin{lemma}\label{lem:even-ulc}
Suppose $f(z) = \sum_{i=0}^s b_i z^i \in \ULC(N)$ with $b_0 > 0$, and adopt the convention that $b_i = 0$ for $i > s$. Then the extracted series $f^{\langle 2,0 \rangle}(z)$ belongs to $\ULC(\lfloor N/2 \rfloor)$.
\end{lemma}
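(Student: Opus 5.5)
The plan is to reduce the claim to a comparison of consecutive ratios of $f$, using only the defining inequalities of $\ULC(N)$. Write $c_i=b_{2i}$ and $M=\lfloor N/2\rfloor$, so that $f^{\langle 2,0\rangle}(z)=\sum_i c_iz^i$, of degree $\lfloor s/2\rfloor\le M$.

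\emph{No internal zeros.} Since $f$ has no internal zeros and $b_0>0$, the support of $(b_i)$ is $\{0,\ldots,s\}$. Hence the support of $(c_i)$ is $\{0,\ldots,\lfloor s/2\rfloor\}$, which is again an interval. It then remains to show, for $1\le i<\lfloor s/2\rfloor$,
\[
c_i^2\ \ge\ c_{i-1}c_{i+1}\,\frac{\binom{M}{i}^2}{\binom{M}{i-1}\binom{M}{i+1}}
= c_{i-1}c_{i+1}\,\frac{(i+1)(M-i+1)}{i(M-i)},
\]
as in the computation in the proof of \cref{lem:ulc-order}. When $c_{i+1}=0$ there is nothing to prove, so all $b_j$ with $j\le 2i+2\le s$ are positive. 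Note also $i<M$.

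\emph{Ratios.} Set $q_j=b_j/b_{j-1}$ for $1\le j\le s$ and
\[
A_j=\frac{(j+1)(N-j+1)}{j(N-j)}\qquad (1\le j<N).
\]
The hypothesis $f\in\ULC(N)$ says exactly $q_j/q_{j+1}\ge A_j$ for $1\le j<s$. Since $b_{2i}/b_{2i-2}=q_{2i-1}q_{2i}$ and $b_{2i+2}/b_{2i}=q_{2i+1}q_{2i+2}$, we get
\[
\frac{c_i^2}{c_{i-1}c_{i+1}}=\frac{q_{2i-1}}{q_{2i+1}}\cdot\frac{q_{2i}}{q_{2i+2}}
\ \ge\ A_{2i-1}A_{2i}\cdot A_{2i}A_{2i+1}.
\]

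\emph{Telescoping.} The factors $(j+1)/j$ telescope to
\[
\frac{(2i+1)(2i+2)}{(2i-1)\,2i}\ \ge\ \frac{i+1}{i}.
\]
The factors $(N-j+1)/(N-j)$ telescope to
\[
\frac{(N-2i+2)(N-2i+1)}{(N-2i)(N-2i-1)}\ \ge\ \frac{N-2i+2}{N-2i}=\frac{N/2-i+1}{N/2-i}.
\]
Since $x\mapsto(x+1)/x$ is decreasing on $x>0$ and $N/2-i\ge M-i>0$, the last quantity is at least $(M-i+1)/(M-i)$. Multiplying the two bounds gives the required inequality.

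\emph{Main obstacle.} The only delicate point is the index bookkeeping, namely checking that every $A_j$ used has $j<N$ and positive denominators. This follows from $2i+1<2i+2\le s\le N$. The boundary cases, where the log-concavity condition is vacuous or a coefficient vanishes, are dismissed as above.
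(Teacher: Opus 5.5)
Your reduction is correct and is, in substance, the paper's argument. The product $A_{2i-1}A_{2i}^2A_{2i+1}$ equals $\binom{N}{2i}^2/\bigl(\binom{N}{2i-2}\binom{N}{2i+2}\bigr)$. So after your ratio chaining, which plays the role of the paper's ``even-index subsequence of a log-concave sequence'' step, what remains is exactly log-concavity of $\binom{N}{2i}/\binom{M}{i}$. The paper checks this by computing the consecutive ratio in each parity. Your support argument, the four-fold ratio inequality, the index bookkeeping, and the bound $\frac{(2i+1)(2i+2)}{(2i-1)2i}\ge\frac{i+1}{i}$ are all fine.

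The gap is in the last comparison. Since $x\mapsto(x+1)/x$ is decreasing and $N/2-i\ge M-i$, monotonicity gives $\frac{N/2-i+1}{N/2-i}\le\frac{M-i+1}{M-i}$, which is the opposite of what you assert. For even $N$ the two sides coincide. For odd $N=2M+1$, however, your claimed inequality is strictly false, and the product of your two lower bounds falls short of the target. For example, $N=5$, $M=2$, $i=1$ gives $\frac{i+1}{i}\cdot\frac{N-2i+2}{N-2i}=\frac{10}{3}$, whereas you need $\frac{(i+1)(M-i+1)}{i(M-i)}=4$. Odd $N$ is precisely the case used later ($N=2d+1$ in \cref{prop:dilation-double-ulc}), so as written the proof fails where it matters.

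The fix is to discard the other factor:
$\frac{(N-2i+2)(N-2i+1)}{(N-2i)(N-2i-1)}\ge\frac{N-2i+1}{N-2i-1}=1+\frac{2}{N-2i-1}\ge1+\frac{2}{2M-2i}=\frac{M-i+1}{M-i}$.
This uses $1\le N-2i-1\le 2M-2i$, which follows from $2i+2\le N$ and $N-1\le 2M$. With this change your argument is complete.
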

\begin{proof}
Let $M = \lfloor N/2 \rfloor$. The case $M=0$ yields a constant polynomial, making the assertion trivial. For $M \ge 1$, we can decompose the normalized coefficients of $f^{\langle 2,0 \rangle}(z)$ into two factors:
\[\frac{b_{2i}}{\binom{M}{i}}
 = \frac{b_{2i}}{\binom{N}{2i}}\frac{\binom{N}{2i}}{\binom{M}{i}}.
\]
The first factor, viewed as a sequence in $i$, forms a subsequence of the log-concave sequence $b_j / \binom{N}{j}$. By the same ratio argument applied in the proof of \cref{thm:dilation-unimodal-lc}, this subsequence preserves log-concavity.

The second factor is positive for $0 \le i \le M$. For $1 \le i \le M$, the ratio of consecutive terms evaluates to
\[\frac{\binom{N}{2i}/\binom{M}{i}}{\binom{N}{2i-2}/\binom{M}{i-1}}
 = \frac{(N-2i+2)(N-2i+1)}{2(2i-1)(M-i+1)}
 = \begin{cases}
    \dfrac{2M-2i+1}{2i-1}, & \text{if } N=2M,\\[6pt]
    \dfrac{2M-2i+3}{2i-1}, & \text{if } N=2M+1.
   \end{cases}
\]
In either case, as $i$ increases, the numerator is decreasing while the positive denominator is increasing. Thus, the ratio is monotonically nonincreasing, establishing the log-concavity of the second factor.

Since the termwise product of two nonnegative log-concave sequences is inherently log-concave, the normalized coefficient sequence of $f^{\langle 2,0 \rangle}(z)$ is log-concave. Furthermore, because $f(z)$ possesses no internal zeros and $b_0 > 0$, 
we have $f^{\langle 2,0 \rangle}(z) \in \ULC(M)$.
\end{proof}

We use Liggett's convolution theorem \cite{Liggett}, which asserts that if $f \in \ULC(A)$ and $g \in \ULC(B)$, then their product satisfies $fg \in \ULC(A+B)$. 
Note that the designated orders $A$ and $B$ are permitted to exceed the actual degrees of the respective polynomials. 
As a fundamental application, the binomial polynomial $(1+z)^{d+1}$ belongs to $\ULC(d+1)$ since its normalized coefficients are one.

\begin{proposition}\label{prop:dilation-double-ulc}
The operation of doubling a polytope preserves ultra log-concavity with respect to both the dimension and the actual degree. Consequently, this preservation extends to any dilation factor of the form $r=2^k$, where $k\geq 0$.
\end{proposition}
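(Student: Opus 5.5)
For $r=2$ we have $B_2(z)=1+z$, so by \cref{prop:dilation-formula}
\[
h_{2P}^*(z)=\bigl((1+z)^{d+1}h_P^*(z)\bigr)^{\langle 2,0\rangle}.
\]
The whole argument is to combine Liggett's convolution theorem with \cref{lem:even-ulc}, and then keep track of the orders.

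\emph{Dimension order.} Assume $h_P^*\in\ULC(d)$. Since $(1+z)^{d+1}\in\ULC(d+1)$, Liggett's theorem gives
\[
f(z):=(1+z)^{d+1}h_P^*(z)\in\ULC(2d+1).
\]
Here $f$ has positive constant term and no internal zeros, because both factors have these properties. \cref{lem:even-ulc} with $N=2d+1$ then gives
\[
h_{2P}^*=f^{\langle 2,0\rangle}\in\ULC(\lfloor(2d+1)/2\rfloor)=\ULC(d),
\]
which is the dimension order of $2P$, since $\dim 2P=d$.

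\emph{Actual-degree order.} Assume $h_P^*\in\ULC(s_P)$. Liggett gives $f\in\ULC(s_P+d+1)$, so \cref{lem:even-ulc} yields $h_{2P}^*\in\ULC(\lfloor(s_P+d+1)/2\rfloor)$. By \eqref{eq:dilation-degree},
\[
s_{2P}=d+1-\lceil(d+1-s_P)/2\rceil=\lfloor(d+1+s_P)/2\rfloor,
\]
so the order obtained is exactly the actual degree of $h_{2P}^*$. The identity $d+1-\lceil m/2\rceil=\lfloor(2(d+1)-m)/2\rfloor$, applied with $m=d+1-s_P$, is where I would be careful. This matching is the main point to verify; if it were off by one in the wrong direction, \cref{lem:ulc-order} could only move to larger orders.

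\emph{No internal zeros.} I also need to check that the extracted polynomial has no internal zeros. This holds because $f$ has positive coefficients on the whole interval $[0,\deg f]$, so the even-indexed subsequence is positive up to its last entry. This point is already covered inside \cref{lem:even-ulc}.

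\emph{Powers of two.} For $r=2^k$, induct on $k$ using $2^kP=2(2^{k-1}P)$. Each step doubles a lattice $d$-polytope that has already been shown to satisfy the hypothesis, in the dimension order and in the actual-degree order respectively. The case $k=0$ is trivial.
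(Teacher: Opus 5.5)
Your proposal is correct and follows the paper's proof essentially step for step. You apply Liggett's theorem to $(1+z)^{d+1}h_P^*$ in order $2d+1$ (resp.\ $s_P+d+1$), extract the even section via \cref{lem:even-ulc}, identify $\lfloor (s_P+d+1)/2\rfloor$ with $s_{2P}$ through \eqref{eq:dilation-degree}, and iterate along $P,2P,\ldots,2^{k-1}P$, with your no-internal-zeros check being the point that \cref{lem:even-ulc} already absorbs.
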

\begin{proof}
Suppose that $h_P^*(z) \in \ULC(d)$. By Liggett's convolution theorem, we have
\[(1+z)^{d+1}h_P^*(z)\in\ULC(2d+1).
\]
By \cref{lem:even-ulc} with $N=2d+1$ and \eqref{eq:dilation-extraction} with $r=2$, we obtian $h_{2P}^*(z) \in \ULC(d)$.

Now, assume the stronger condition that $h_P^*(z) \in \ULC(s_P)$. In this case, Liggett's theorem implies that the product resides in $\ULC(s_P+d+1)$. A subsequent application of the same \cref{lem:even-ulc} deduces that
\[ h_{2P}^*(z) \in \ULC\left(\left\lfloor \frac{s_P+d+1}{2} \right\rfloor\right).
\]
According to the degree formula \eqref{eq:dilation-degree}, we find
\[
 \deg h_{2P}^* = d+1-\left\lceil\frac{d+1-s_P}{2}\right\rceil = \left\lfloor\frac{s_P+d+1}{2}\right\rfloor.
\]
Observe that this derived order coincides precisely with the new actual degree of $h_{2P}^*(z)$.

To establish the assertion for the factor $r=2^k$, we iterate the doubling procedure successively across the sequence of polytopes $P, 2P, \ldots, 2^{k-1}P$. The dimension $d$ remains invariant throughout this process, while the actual-degree preservation applies dynamically to the updated degree at each iterative step. This completes the proof.
\end{proof}

The partial results on ultra log-concavity above are relatively easy. Below we give the complete ultra log-concavity preservation property.

\begin{theorem}\label{thm:dilation-ulc-general}
Let $P\subset\R^d$ be a lattice $d$-polytope and set $s_P=\deg h_P^*(z)$. For every positive integer $r$, the following results hold.
\begin{enumerate}
\item If $h_P^*\in\ULC(d)$, then $h_{rP}^*\in\ULC(d)$.
\item If $h_P^*\in\ULC(s_P)$, then $h_{rP}^*\in\ULC(s_{rP})$, where $s_{rP}$ is the actual degree of $h_{rP}^*(z)$, and 
\[s_{rP}=\left\lfloor\frac{s_P+(r-1)(d+1)}r\right\rfloor=d+1-\left\lceil\frac{d+1-s_P}{r}\right\rceil.
\]
\end{enumerate}
\end{theorem}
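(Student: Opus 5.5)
The plan is to work directly with the Veronese formula \eqref{eq:dilation-extraction}, $h_{rP}^*=U_r^{d+1}h_P^*$. I will prove a general statement about the operator $U_r^{N}$ that covers both parts at once.

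\textbf{Why the doubling argument does not extend.} The proof of \cref{prop:dilation-double-ulc} put $(1+z)^{d+1}h$ into a large $\ULC$ class and then extracted every second coefficient. That cannot work for $r\geq3$. Already $B_3(z)=1+z+z^2\notin\ULC(2)$, because the normalized sequence $1,\tfrac12,1$ is not log-concave. So $B_r(z)^{d+1}h(z)$ need not lie in $\ULC\bigl(d+(r-1)(d+1)\bigr)$.

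\textbf{What I will show instead.} I will treat $U_r^{N}$ as a linear operator on coefficient sequences and show it maps $\ULC(D)$ into $\ULC(D')$ for the right $D'$. Write $h=\sum_k a_kz^k$. Then
\[
[z^i]\,U_r^{N}h=\sum_k a_k\,c_N(ri-k),
\]
where $c_N(m)=[z^m]B_r(z)^N$ counts compositions of $m$ into $N$ parts, each in $\{0,\ldots,r-1\}$. Put $x_k=a_k/\binom Dk$, which is log-concave by hypothesis, and set
\[
K(i,k)=\binom Dk\,c_N(ri-k)\Big/\binom{D'}{i}.
\]
The normalized output is then $y_i=\sum_k x_kK(i,k)$. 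I want to show that $(y_i)$ is log-concave and has no internal zeros.

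\textbf{Key steps, in order.}
\begin{enumerate}
\item Show that the kernel $K$ is totally positive of order two in $(i,k)$. The plan is to use a lattice-path (Lindstr\"om--Gessel--Viennot) interpretation of $c_N(ri-k)$.
\item Show that each column $i\mapsto K(i,k)$ is log-concave, using log-concavity of $c_N$ (a convolution of the log-concave sequence $B_r$) together with the ratio argument of \cref{lem:even-ulc}, now applied to $\binom{D}{k}/\binom{D'}{i}$ along the arithmetic progression $ri-k$.
\item Combine these through the standard Liggett-style induction, or the Karlin-type composition principle for sign-regular kernels, to conclude that $(y_i)$ is log-concave.
\item Check that $(y_i)$ has no internal zeros. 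This follows because $c_N$ has interval support and $a_0>0$.
\end{enumerate}
An alternative I will try first is to factor the operator. The identity $U_{rs}^{N}=U_r^{N}U_s^{N}$ would reduce the problem to prime $r$, and for $r=2$ the claim is \cref{prop:dilation-double-ulc}. However, odd primes still need the kernel argument, so the reduction is only a partial help.

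\textbf{Choice of parameters.} For (i), take $D=d$, $N=d+1$ and $D'=d$. The count is consistent: $\lfloor (d+(r-1)(d+1))/r\rfloor=d$. For (ii), take $D=s_P$, $N=d+1$ and
\[
D'=\left\lfloor\frac{s_P+(r-1)(d+1)}{r}\right\rfloor .
\]
The identity $\lfloor (s+(r-1)(d+1))/r\rfloor=d+1-\lceil (d+1-s)/r\rceil$ is elementary, and by \eqref{eq:dilation-degree} this value equals $s_{rP}$. So the order produced by the argument is exactly the actual degree.

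\textbf{Main obstacle.} I expect the hardest step to be the TP2 property of $K$ once the binomial normalizations are included. Those normalizations are exactly what broke the naive approach. Because of the $\binom Dk$ and $\binom{D'}i$ factors, proving the needed inequality
\[
K(i,k)K(i+1,k+1)\geq K(i,k+1)K(i+1,k)
\]
will likely require an explicit injection between pairs of compositions. It may also need a case split according to the residues of $k$ modulo $r$. This is presumably where the technical work deferred to the appendix lies.
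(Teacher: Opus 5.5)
There is a genuine gap at step (iii), and the difficulty is not where you placed it.

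First, TP2 of $K$ is not the obstacle. The factors $\binom Dk$ and $\binom{D'}i$ each depend on only one index, so they cancel from $K(i,k)K(i+1,k+1)\geq K(i,k+1)K(i+1,k)$. With $m=ri-k$, the inequality reduces to $c_N(m)c_N(m+r-1)\geq c_N(m-1)c_N(m+r)$, which follows at once from log-concavity of $c_N$. No Lindstr\"om--Gessel--Viennot argument is needed.

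The real problem is that TP2 of a kernel together with log-concave columns does \emph{not} imply that $x\mapsto Kx$ preserves log-concavity. Take $K(i,k)=1$ if $\lfloor i/2\rfloor=k$ and $0$ otherwise. It is TP2, because $K(i,k')K(i',k)$ vanishes whenever $i<i'$ and $k<k'$. Its columns are log-concave with interval support. Yet it sends $x=(1,2,1)$ to $y=(1,1,2,2,1,1)$, and $y_1^2<y_0y_2$.

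Karlin's composition formula yields TP2 of composed kernels. Log-concavity of $y$ is TP2 of $(i,j)\mapsto y_{i-j}$, and composition produces this only for a Toeplitz kernel $K(i,k)=g(i-k)$, where $y_{i-j}=\sum_l g(i-l)x_{l-j}$. That is exactly the setting of the convolution theorem and of Liggett's theorem. The normalizations $\binom Dk$ and $\binom{D'}i$ destroy this translation structure, so no ``standard'' induction applies.

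Step (ii) also fails as you justify it. The ratio argument of \cref{lem:even-ulc} needs $\bigl(c_N(m)/\binom{N(r-1)}m\bigr)_m$ to be log-concave. For $r\geq3$ this fails already at $m=1$ for every $N$; it is your own observation $B_3\notin\ULC(2)$, persisting in all powers. Plain log-concavity of $c_N$ gives no surplus to absorb the log-convex factor $1/\binom{D'}i$. The column $k=0$ is the case $h=1$ of the theorem itself, so it needs genuine quantitative estimates.

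The missing idea is to interpolate between log-concavity and ultra log-concavity. The paper defines $\ULC_\alpha(A)$ by normalizing with $\binom Ai^\alpha$ and proves three things.
\begin{enumerate}
\item $\ULC_\alpha$ is closed under multiplication with additive orders. This uses the Br\"and\'en--Huh Lorentzian machinery: the coefficient-power operator $R_p$ and the symbol theorem for linear operators.
\item $B_r^3\in\ULC_\alpha(3(r-1))$ and $B_r^4\in\ULC_\alpha(4(r-1))$ for $\alpha=53/(40r)$, by explicit coefficient estimates. Writing $d+1=3a+4b$ for $d\geq5$ then gives $B_r^{d+1}h_P^*\in\ULC_\alpha\bigl(A+(r-1)(d+1)\bigr)$, with $A=d$ or $A=s_P$.
\item Extracting every $r$-th coefficient maps $\ULC_{53/(40r)}(N)$ into $\ULC(\lfloor N/r\rfloor)$. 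The reason is that the $r$-step ratio $\binom Nn^2/\bigl(\binom N{n-r}\binom N{n+r}\bigr)$, raised to the power $53/(40r)$, still dominates $\binom Dk^2/\bigl(\binom D{k-1}\binom D{k+1}\bigr)$.
\end{enumerate}
So the drop in the normalization exponent from $1$ to $53/(40r)$, caused by the factors $B_r$, is recovered by the $r$-fold sparsification.

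The remaining cases are handled separately. For $d\leq4$ and $r\geq3$, the paper uses Zhang's real-rootedness result (valid since $s_P\leq d<2r$) together with Newton's inequalities. The case $r=2$ is \cref{prop:dilation-double-ulc}. Your parameter bookkeeping and the factorization $U^N_{rs}=U^N_rU^N_s$ are correct, but neither touches the odd-prime core.
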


The proof of this theorem is somewhat lengthy, and we will give it in \cref{Append-Section}.

\subsection{Geometric properties}

\begin{theorem}
Integral dilations preserve spanning, IDP, and very ampleness.
\end{theorem}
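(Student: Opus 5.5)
The plan is to handle the three properties separately. In each case the key identity is $n(rP)=(rn)P$, so lattice points of $n(rP)$ are exactly lattice points of $(rn)P$.

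\emph{IDP.} Let $u\in n(rP)\cap\Z^d=(rn)P\cap\Z^d$. IDP of $P$ at level $rn$ writes $u=p_1+\cdots+p_{rn}$ with $p_i\in P\cap\Z^d$. Group the summands into $n$ consecutive blocks of $r$ each. Each block sum $q_j=p_{(j-1)r+1}+\cdots+p_{jr}$ is a lattice point lying in $rP$, by convexity. Hence $u=q_1+\cdots+q_n$ with $q_j\in rP\cap\Z^d$, which is IDP for $rP$.

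\emph{Very ampleness.} This is the same argument with a threshold. Suppose $P$ satisfies the decomposition at every level $m\ge m_0$. Then for every $n\ge \lceil m_0/r\rceil$ we have $rn\ge m_0$, so each $u\in n(rP)\cap\Z^d$ decomposes into $rn$ lattice points of $P$. Grouping into $n$ blocks of $r$ as above shows $rP$ decomposes at all levels $n\ge\lceil m_0/r\rceil$.

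\emph{Spanning.} This is the only point needing care, since $r(P\cap\Z^d)$ is usually a proper subset of $rP\cap\Z^d$. Fix $p_0\in P\cap\Z^d$ and use $rp_0\in rP\cap\Z^d$ as the base point. For any $u\in P\cap\Z^d$, the point $(r-1)p_0+u$ is a lattice point of $rP$ by convexity. Its difference with $rp_0$ is exactly $u-p_0$. Hence the difference group of $rP\cap\Z^d$ contains every generator $u-p_0$ of the difference group of $P$, which is $\Z^d$ by hypothesis. If $P$ is lower dimensional, the same argument works in integer affine coordinates on $\aff(P)$, since $\aff(rP)=r\aff(P)$ carries the translated lattice.

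I expect no real obstacle. The only subtle step is choosing these mixed points $(r-1)p_0+u$ for spanning rather than the dilated points $ru$, because differences of the latter only generate $r\Z^d$.
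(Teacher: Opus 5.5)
Your proposal is correct and follows the paper's proof essentially step for step. For spanning you use the mixed lattice points $(r-1)p_0+u$ against the base point $rp_0$; for IDP you regroup an $rn$-term decomposition into $n$ blocks of $r$; and for very ampleness you run the same regrouping above the threshold $\lceil m_0/r\rceil$. Your remark on lower-dimensional $P$ goes slightly beyond the paper, which only treats full-dimensional $P$, but it is sound because $\lin(rP-rP)=\lin(P-P)$.
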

\begin{proof}
Given a $v\in P\cap\Z^d$. For any $x\in P$, we have 
\[(r-1)v+x=r\left(\frac{r-1}{r}v+\frac1r x\right)\in rP.
\]
Therefore, we have $(r-1)v+P\subseteq rP$.
Thus $rv$ and $(r-1)v+x$ are lattice points of $rP$ whenever $x\in P\cap\Z^d$. Their difference is $x-v$.
If these differences generate $\Z^d$ for $P$, they also do so for $rP$. Thus integral dilations preserve spanning.

Now suppose $P$ is IDP. Set $u\in n(rP)\cap\Z^d=(nr)P\cap\Z^d$.
Write $u=u_1+\cdots+u_{nr}$ with $u_i\in P\cap\Z^d$.
Partition the $nr$ summands into $n$ groups of $r$.
Each group sum is a lattice point of $rP$, because the
average of the $r$ points belongs to $P$ by convexity.
Their $n$ group sums decompose $u$ as required.

Finally, suppose $P$ is very ample. We choose an integer $N$ such that every lattice point of $mP$ decomposes into $m$
lattice points of $P$ whenever $m\geq N$. For every integer $n\geq\lceil N/r\rceil$, the preceding argument applies with $m=nr\geq N$. Hence every lattice point of $n(rP)$ decomposes into $n$ lattice points of $rP$, and $rP$ is very ample.
\end{proof}

We also note that Bruns--Gubeladze--Trung \cite[Corollary~1.3.3]{BGT} gave a sufficient condition for $rP$ to satisfy the integer decomposition property for any lattice polytope $P$.

\begin{theorem}{\em (Bruns--Gubeladze--Trung \cite[Corollary~1.3.3]{BGT})}
Let $P$ be a $d$-dimensional lattice polytope. Then, for any positive integer $r\geq d-1$, the dilation $rP$ is IDP.
\end{theorem}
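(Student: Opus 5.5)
The plan is to reduce to lattice simplices through a triangulation, and then to control the heights of the points in the fundamental parallelepiped. I will first fix a lattice triangulation $\mathcal T$ of $P$ whose vertex set is all of $P\cap\Z^d$; for instance, a placing triangulation obtained by inserting the lattice points one at a time. Every maximal simplex $\Delta=\conv(v_0,\ldots,v_d)\in\mathcal T$ is then \emph{empty}: $\Delta\cap\Z^d=\{v_0,\ldots,v_d\}$.

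Let $n\geq1$ and $u\in n(rP)\cap\Z^d=(nr)P\cap\Z^d$, and put $m=nr$. The point $(u,m)$ lies in the cone over $(\Delta,1)$ for some maximal $\Delta\in\mathcal T$. So I will write
\[
(u,m)=\sum_{i=0}^d\lambda_i(v_i,1),\qquad \lambda_i\geq0,\qquad \sum_i\lambda_i=m,
\]
and split each $\lambda_i$ as $\lfloor\lambda_i\rfloor+\{\lambda_i\}$. This gives $(u,m)=y+\sum_i\lfloor\lambda_i\rfloor(v_i,1)$, where $y=\sum_i\{\lambda_i\}(v_i,1)$ is a lattice point of $\Pi_\Delta$ of some integer height $k\in\{0,\ldots,d\}$. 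The remaining $m-k$ summands are lattice points $v_i$ at height one.

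The key step, and the only real obstacle, is to show that $k\leq d-1$. Height $d$ with $d+1$ fractional parts each $<1$ forces every $\{\lambda_i\}>0$. In that case
\[
y'=\sum_i(v_i,1)-y=\sum_i\bigl(1-\{\lambda_i\}\bigr)(v_i,1)
\]
is a lattice point at height one with all barycentric coordinates in $(0,1)$. It is therefore a lattice point of $\Delta$ that is not a vertex, which contradicts emptiness. Hence $k\leq d-1\leq r$. This is exactly where the hypothesis $r\geq d-1$ enters, and it is why I insist on a triangulation using all lattice points. Without emptiness, the bound would only be $r\geq d$.

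Finally, I will regroup. Since $k\leq r$ and $m-k\geq r-k$, I put $y$ together with $r-k$ of the height-one vertex summands. This group is a lattice point of height $r$ in the cone over $(\Delta,1)$, so it is a lattice point of $rP$. The remaining $m-r=(n-1)r$ vertex summands are split into $n-1$ groups of $r$, and each group sum lies in $rP\cap\Z^d$ by convexity, as in the IDP argument for dilations above. This expresses $u$ as a sum of $n$ lattice points of $rP$, so $rP$ is IDP.
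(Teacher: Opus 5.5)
Your proof is correct. The paper does not prove this statement itself but only cites Bruns--Gubeladze--Trung, and your route is essentially the standard argument behind that result: triangulate into empty simplices using all lattice points, use the complementary point $\sum_i(v_i,1)-y$ to bound the height of fundamental-parallelepiped points by $d-1$, and regroup into $n$ points of height $r$. The only nit is the trivial case $d=0$, where $k\leq d-1$ fails, but $k=0\leq r$ still holds and that is all the regrouping uses.
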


The preservation property concerning the triangulation of $rP$ should be attributed to Haase, Paffenholz, Piechnik, and Santos.

\begin{theorem}{\em (Haase--Paffenholz--Piechnik--Santos; \cite[Theorems~4.8 and 4.11, Corollary~4.12 ]{HPPS})}\label{thm:dilation-triangulations}
For every integer $r\geq1$, the following result holds.
\begin{enumerate}
\item if $P$ has a unimodular triangulation, then $rP$ has a unimodular triangulation;
\item if $P$ has a regular unimodular triangulation, then $rP$ has a regular unimodular triangulation;
\item if $P$ has a quadratic triangulation, then $rP$ has a quadratic triangulation.
\end{enumerate}
\end{theorem}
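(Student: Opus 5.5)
We take the three assertions in order, all by one construction. Fix a unimodular triangulation $\mathcal T$ of $P$ and a total order on its vertex set $V(\mathcal T)$. The dilated complex $r\mathcal T=\{r\sigma:\sigma\in\mathcal T\}$ is a lattice subdivision of $rP$ into dilated unimodular simplices, and we refine each cell $r\sigma$ canonically. Let $\sigma=\conv(w_0,\ldots,w_k)$ with $w_0<\cdots<w_k$ in the chosen order. Use the unimodular affine isomorphism $\Delta_k\to\sigma$ sending $(x_1,\ldots,x_k)\mapsto w_0+\sum x_i(w_i-w_0)$, then pass to the order coordinates $y_i=x_i+\cdots+x_k$. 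This is unimodular (triangular with ones on the diagonal), so $r\sigma$ becomes $\{r\geq y_1\geq\cdots\geq y_k\geq0\}$. We cut it by all hyperplanes $y_i\in\Z$ and $y_i-y_j\in\Z$, giving the Freudenthal (Kuhn, alcoved) triangulation of $r\sigma$. Its maximal cells are translates of the simplices $\{0\leq y_{\pi(1)}-a_{\pi(1)}\leq\cdots\leq1\}$ with integer $a$, so they are unimodular. This handles the cells one at a time.

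\textbf{Gluing.} The key compatibility claim is that the restriction of this triangulation of $r\sigma$ to a face $r\tau$ equals the Freudenthal triangulation of $r\tau$ for the induced order on the vertices of $\tau$. We check it by writing the defining hyperplanes intrinsically: in barycentric coordinates $(\lambda_0,\ldots,\lambda_k)$ on $r\sigma$ (summing to $r$), we have $y_i=\lambda_i+\cdots+\lambda_k$. So the cutting hyperplanes are exactly those where a partial sum of an interval $\lambda_i+\cdots+\lambda_{j-1}$ of consecutive barycentric coordinates is an integer. Setting some $\lambda$'s to zero turns these into the analogous conditions for $\tau$. Hence the refinements agree on common faces of $r\mathcal T$, and we obtain a lattice triangulation of $rP$ all of whose maximal simplices are unimodular. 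This proves (i).

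\textbf{Regularity.} Suppose $\mathcal T$ is regular with a convex piecewise-linear height function $\omega$ on $P$, linear exactly on the cells. Let $\phi(t)=\operatorname{dist}(t,\Z)$-type convex integrality penalty; concretely, take the piecewise-linear convex function with breakpoints at the integers, e.g.\ $g(t)=\lceil t\rceil\lfloor t\rfloor$-interpolation of $t^2$. On each cell define
\[ \psi_\sigma=\sum_{0\leq i<j\leq k+1}g\bigl(\lambda_i+\cdots+\lambda_{j-1}\bigr). \]
On $r\sigma$ this is a regular lifting inducing the Freudenthal triangulation. By the interval-sum description, $\psi_\sigma$ restricts to $\psi_\tau$ on faces, so $\psi$ is a well-defined continuous function on $rP$. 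We then take the height $R\,\omega(x/r)+\psi(x)$ with $R\gg0$. The large multiple of $\omega$ forces the coarse cells $r\sigma$, and $\psi$ refines each cell into its Freudenthal triangulation. This is the standard argument that a regular refinement of a regular subdivision, by heights compatible on faces, is regular, and it proves (ii).

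\textbf{Flagness.} For (iii) it remains to show flagness of the refinement when $\mathcal T$ is flag; this is the step I expect to be the main obstacle. The plan is as follows. Let $S$ be a set of lattice points of $rP$ that are pairwise joined by edges of the refinement. First, each pair lies in a common dilated cell. Second, the ``supports'' (the minimal faces of $\mathcal T$ whose dilations contain the points) then pairwise span faces of $\mathcal T$, so flagness of $\mathcal T$ puts all of $S$ in a single cell $r\sigma$. Inside $r\sigma$, in the $y$-coordinates, two lattice points $u,v$ span an edge of the Freudenthal triangulation iff $v-u$ is a $0/1$-vector of the form $\pm\one_{A}$ with $A$ an initial segment in some compatible permutation order. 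Equivalently, all differences $(v_i-v_j)-(u_i-u_j)$ and $v_i-u_i$ lie in $\{0,\pm1\}$ with consistent signs. A pairwise-compatible family of such vectors forms a chain $\one_{A_1}\subset\cdots\subset\one_{A_m}$ after sorting, and hence spans a face of an alcove. The delicate points are making the first reduction (pairwise supports $\Rightarrow$ common cell) rigorous, and checking that the alcoved triangulation is flag. For the latter I would cite or reprove the known fact that alcoved (type $A$ root-hyperplane) triangulations are flag.
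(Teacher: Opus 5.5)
\paragraph{Comparison with the paper.} The paper gives no argument of its own for this statement; it only cites HPPS. Your construction is the standard route: refine $r\mathcal T$ cell by cell by the Freudenthal (alcoved) triangulation, taken with respect to one global order on $V(\mathcal T)$, i.e.\ the $r$-th edgewise subdivision. Part~(i), including the gluing via consecutive interval sums of barycentric coordinates, is correct. For the \emph{set} of cutting hyperplanes, it is harmless that distinct intervals collapse to the same one on a face.

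\paragraph{The gap: $\psi$ is not well defined.} The regularity step rests on a false claim, because there multiplicities matter: $\psi_\sigma$ does \emph{not} restrict to $\psi_\tau$. Setting to zero the coordinates of the vertices of $\sigma$ not in $\tau$ makes several intervals of $\sigma$ coincide on $r\tau$. The resulting multiplicities depend on where the missing vertices sit in the order. Take $w_0<w_1<w_2<w_3$, $\sigma=\conv(w_0,w_1,w_2)$, $\sigma'=\conv(w_0,w_2,w_3)$, and $\tau=\conv(w_0,w_2)$. On $r\tau$, where $\lambda_0+\lambda_2=r$, your formula gives
\[
\psi_\sigma|_{r\tau}=2g(\lambda_0)+2g(\lambda_2)+g(r)+g(0),\qquad
\psi_{\sigma'}|_{r\tau}=g(\lambda_0)+2g(\lambda_2)+2g(r)+g(0),
\]
while $\psi_\tau=g(\lambda_0)+g(\lambda_2)+g(r)$. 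The discrepancy $g(\lambda_0)-g(r)$ bends at every interior lattice point of $r\tau$ once $r\geq2$. So it cannot be absorbed even by changing each $\psi_\sigma$ by an affine function.

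Hence $\psi$ is not a function on $rP$, and there is no single height function to which the ``regular refinement of a regular subdivision'' argument applies. This is not a technicality. Refinements that are regular cell by cell and agree on common faces need not be regular; the classical ``mother of all examples'' is of this type.

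\paragraph{Repair.} Build the penalty from the \emph{global} order. The scaled barycentric coordinates $\lambda_v$, $v\in V(\mathcal T)$, are well-defined continuous piecewise-linear functions on $rP$. So
\[
\psi(x)=\sum_{a\leq b}g\Bigl(\sum_{a\leq v\leq b}\lambda_v(x)\Bigr),
\]
summed over all pairs $a\leq b$ in $V(\mathcal T)$, is well defined. Each global interval meets the vertices of $\sigma$ in a consecutive run (possibly empty or everything). Every nonempty proper consecutive run $J$ of $\sigma$ occurs at least once, namely from $[\min J,\max J]$. Thus
\[
\psi|_{r\sigma}=\sum_J m_J\,g(\lambda_J)+\text{const},\qquad m_J\geq1 .
\]
This is convex on $r\sigma$, linear on each alcove, and strictly bent across every Freudenthal wall inside $r\sigma$. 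Across the finitely many walls of $r\mathcal T$ its bending is bounded. Hence $R\,\omega(x/r)+\psi(x)$ with $R$ large is convex with domains of linearity exactly the fine cells. Your concrete $g$ (the piecewise-linear interpolation of $t^2$) is the right choice; $\operatorname{dist}(t,\Z)$ itself is not convex. With this change, (ii) and the regularity half of (iii) go through.

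\paragraph{The flag step.} This is shorter than you feared. Let $S$ be a clique of the refinement. Any two vertices of $\bigcup_{u\in S}\operatorname{supp}(u)$ lie in $\operatorname{supp}(u)\cup\operatorname{supp}(u')$ for some $u,u'\in S$. This set lies in a single cell of $\mathcal T$: trivially if $u=u'$, and otherwise because the edge $[u,u']$ lies in some $r\sigma$. So the union is a clique of $\mathcal T$, hence a face $W$ by flagness. Then $S$ is a clique of the Freudenthal triangulation of $rW$, and your chain argument finishes: $\one_A+\one_B$ must again be a $0/1$-vector, which forces $A\cap B=\varnothing$ and gives transitivity.
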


\section{Concluding remarks}\label{Section-Finily}

The main contribution of this paper is to complete all the characterizations in \cref{tab:coefficients}. 
In particular, it enables the reader to clearly understand the results on the preservation properties of the eight common operations on lattice polytopes.
This may be crucial for future research on Ehrhart theory of polytopes.
For example, it provides valuable guidance for constructing counterexamples to some conjectures.

We again mention that the direct motivation of this paper is that Ferroni \cite{Ferroni2026} used the Cayley sum to construct counterexamples: lattice polytopes with the integer decomposition property whose $h^*$-polynomial is not unimodal.
This elegant construction immediately attracted the author's attention.
This prompted the authors to complete \cref{tab:coefficients}.

We also mention the following two conjectures. These are problems of recent interest to the author.
Although the author's attempts to find answers were unsuccessful, the results of this paper may offer some hope for these problems.

\begin{question}(Oda's smoothness problem; \cite{Oda})
Does every smooth lattice polytope have the integer decomposition property?
\end{question}

Here smoothness has the edge basis meaning defined in \cref{sec:cayley}: at every vertex of a $d$-polytope, exactly $d$ edges meet and their primitive directions form a basis of $\Z^d$. The question originates in \cite{Oda} and remains open in the account \cite{SWZ}. It asks whether, for every smooth $P$ and every integer $n\geq1$, one has
\[(P\cap\Z^d)+(nP\cap\Z^d)=((n+1)P)\cap\Z^d.
\]

Indeed, IDP gives this equality by taking one summand separately in a decomposition into $n+1$ lattice points.
Conversely, if the equality holds for every $n$, induction on $n$ decomposes every point of $nP$ into $n$ points
of $P$. Thus the question is directly related to lattice point addition in Minkowski sums. The non-IDP Minkowski and Cayley constructions above do not resolve it, since smoothness of their outputs has not been established.

\begin{question}(Gal's conjecture for $h^*$-polynomials; \cite{SRGal})
If a Gorenstein lattice polytope admits a quadratic triangulation, is its $h^*$-polynomial $\gamma$-positive?
\end{question}

This problem can be traced back to the work of Gal \cite{SRGal} and also appears in \cite[Conjecture~3.28]{FH}.
The flag condition in a quadratic triangulation is essential to the formulation. Without it, even reflexive polytopes
with regular unimodular triangulations can have negative $\gamma$-coefficients, see Ferroni's example \cite[Corollary~2.2]{FerroniGamma}.






\vspace{5mm}
\noindent
{\small \textbf{Acknowledgments:}}
The author would like to thank his colleague, Dr. Chao Xu, for providing the inspiration for this paper, which arose from Dr. Xu's inquiry as to whether there are any nice operations between lattice polytopes.
The author would also like to thank Dr. Zihao Zhang for many helpful suggestions.
In particular, Dr. Zhang provided considerable assistance with the proofs of the theorems in the appendix.
The author was partially supported by the Postdoctoral Fellowship Program and China Postdoctoral Science Foundation (Grant No. BX2026002).

\noindent{\small \textbf{Declaration of AI Assistance:}}
During the preparation of this manuscript, the author used ChatGPT to assist in exploring possible approaches, checking technical details, and improving the exposition. All mathematical arguments, computations, proofs, and results were independently verified by the author. The author take full responsibility for the content of this manuscript.

\newpage
\appendix

\section{Proof of \cref{thm:dilation-ulc-general}}\label{Append-Section}

\subsection{Lorentzian polynomials}

Let $A$ be a nonnegative integer and let $0\leq\alpha\leq1$.
For a nonzero polynomial $f(z)=\sum_{i=0}^A a_i z^i$ with nonnegative coefficients, write $f(z)\in\ULC_\alpha(A)$ if the set
$\{i:a_i>0\}$ is an interval of integers and
\[\left(\frac{a_i}{\binom Ai^\alpha}\right)^2\geq\frac{a_{i-1}}{\binom A{i-1}^\alpha}\frac{a_{i+1}}{\binom A{i+1}^\alpha},
 \qquad 1\leq i\leq A-1.
\]
An interval support means that there are no zero coefficients between two positive coefficients. Initial and terminal zeros are permitted; in particular, $A$ may exceed $\deg f(z)$. The inequalities are trivially satisfied for $A \le 1$.
Thus $\ULC_0(A)$ consists of the log-concave polynomials with no internal zeros, and $\ULC_1(A)=\ULC(A)$.
For the auxiliary polynomials in this and the following two subsections, the notation $\ULC(A)$ allows initial zeros
subject to the same interval support condition.

For $0\leq\alpha\leq\beta\leq1$, these classes satisfy $\ULC_\beta(A)\subseteq\ULC_\alpha(A)$. To see this, write
\[\frac{a_i}{\binom Ai^\alpha}=\frac{a_i}{\binom Ai^\beta}\binom Ai^{\beta-\alpha}.
\]
The binomial sequence is positive and log-concave, since its successive ratios $(A-i)/(i+1)$ decrease with $i$. 
Since $\beta - \alpha \ge 0$, raising the entries to this power preserves log-concavity. The requisite inequalities then follow immediately from the term-wise product of the log-concavity conditions of both factors. Furthermore, as the binomial factor is positive, the support remains invariant.

We shall use the Lorentzian results of Br\"and\'en and Huh \cite{BH} to prove that convolution preserves every one of these
classes. We first specify the definitions and the forms of their results that are needed.

For a vector $\nu=(\nu_1,\ldots,\nu_m)$ of nonnegative integers and a list of variables $w=(w_1,\ldots,w_m)$, set
\[ |\nu|=\sum_{j=1}^m\nu_j,\qquad \nu!=\prod_{j=1}^m\nu_j!,\qquad w^\nu=\prod_{j=1}^m w_j^{\nu_j},\qquad
 \partial^\nu=\prod_{j=1}^m\left(\frac{\partial}{\partial w_j}\right)^{\nu_j}.
\]
The \emph{support} of a polynomial $H$ (denoted as $\operatorname{supp}H$) is the set of exponent vectors of its nonzero monomials, and $[w^\nu]H$ denotes the coefficient of $w^\nu$. A polynomial is homogeneous of degree $D$ if all its nonzero monomials have total degree $D$. 
A finite set $J$ of nonnegative integer vectors of equal total size is \emph{$M$-convex} if it satisfies the following exchange condition: whenever $\nu,\mu\in J$ and $\nu_i>\mu_i$, there is an index $j$ with $\nu_j<\mu_j$ such that both $\nu-e_i+e_j$ and $\mu+e_i-e_j$ belong to $J$, where $e_i$ is the $i$-th coordinate vector.

For $D\geq2$, a homogeneous polynomial $H$ of degree $D$ with nonnegative coefficients is \emph{Lorentzian} if and only if its support is $M$-convex and, for every $\eta$ with $|\eta|=D-2$, the Hessian of $\partial^\eta H$ has at most one positive eigenvalue
\cite[Theorem~2.25]{BH}. Here the Hessian is the symmetric matrix of second partial derivatives; it is constant because
$\partial^\eta H$ is quadratic. Nonnegative homogeneous polynomials of degree zero or one, as well as the zero polynomial,
are included in the Lorentzian class. In two variables, the characterization takes a particularly simple form:
\[\sum_{i=0}^D d_i w_1^i w_2^{D-i}\text{ is Lorentzian}\quad\Longleftrightarrow\quad
\left(\frac{d_i}{\binom Di}\right)_{i=0}^D \text{ is log-concave with no internal zeros},
\]
provided the coefficients are nonnegative and the polynomial is nonzero \cite[Example~2.26]{BH}.

Products of Lorentzian polynomials are Lorentzian \cite[Corollary~2.32]{BH}. We shall also use the following operation
on their normalized coefficients. Write
\[ H(w)=\sum_{\nu\in\operatorname{supp}H}\frac{c_\nu}{\nu!}w^\nu,\qquad c_\nu=\nu![w^\nu]H,
\]
and, for $0\leq p\leq1$, define
\[ R_p(H)=\sum_{\nu\in\operatorname{supp}H}\frac{c_\nu^p}{\nu!}w^\nu.
\]
If $H$ is Lorentzian, then $R_p(H)$ is Lorentzian \cite[Proposition~3.25]{BH}. At $p=0$, the sum is still taken over
the original support, so no convention for $0^0$ is required.

The final result we need concerns linear operators. Let $\xi=(\xi_1,\ldots,\xi_p)$ be the input variables and let
$w=(w_1,\ldots,w_m)$ be the output variables; the two lists may have different lengths. For a nonnegative integer vector
$\kappa=(\kappa_1,\ldots,\kappa_p)$, let $\R_\kappa[\xi]$ denote the space spanned by all monomials
$\xi^\nu$ with $0\leq\nu_j\leq\kappa_j$. Define $\R_\gamma[w]$ in the same way for an output bound $\gamma$.
Suppose that
\[ T:\R_\kappa[\xi]\longrightarrow\R_\gamma[w]
\]
is linear and homogeneous of degree $\ell$: for every monomial in the full input space, $T(\xi^\nu)$ is either zero or homogeneous
of degree $|\nu|+\ell$. With independent auxiliary variables $u=(u_1,\ldots,u_p)$, its symbol is
\[\operatorname{sym}T(w,u)=\sum_{0\leq\nu\leq\kappa}\binom\kappa\nu T(\xi^\nu)u^{\kappa-\nu},
 \qquad
 \binom\kappa\nu=\prod_{j=1}^p\binom{\kappa_j}{\nu_j}.
\]
The vector inequalities are coordinatewise. This symbol is homogeneous of degree $|\kappa|+\ell$ in $m+p$ variables.
If the symbol is Lorentzian, then $T$ sends every homogeneous Lorentzian polynomial in $\R_\kappa[\xi]$ to a Lorentzian
polynomial \cite[Theorem~3.2]{BH}. The theorem applies to the entire space $\R_\kappa[\xi]$, rather than only to monomials of
one fixed total degree.

Adjoining variables that do not occur in a polynomial does not affect the Lorentzian property. Indeed, this extends its support by zero coordinates and preserves the exchange condition. Derivatives involving an added variable vanish; the Hessians of all other
derivatives are extended by zero rows and columns. This observation allows us to apply the product theorem to polynomials initially
written in separate lists of variables.

\begin{lemma}\label{lem:fractional-ulc-convolution}
Let $0\leq\alpha\leq1$. If $f\in\ULC_\alpha(A)$ and $g\in\ULC_\alpha(B)$, then $fg\in\ULC_\alpha(A+B)$.
\end{lemma}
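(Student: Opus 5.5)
The proof goes through Lorentzian polynomials in two variables: turn $f$ and $g$ into bivariate Lorentzian forms, multiply them, and apply $R_\alpha$ to undo a $p=1/\alpha$ reweighting. The case $\alpha=0$ is the classical convolution theorem for log-concave sequences without internal zeros \cite{StanleyLC}, so assume $0<\alpha\le1$. Write $f=\sum_{i=0}^A a_iz^i$ and $g=\sum_{j=0}^B b_jz^j$, padding with zeros up to $A$ and $B$.

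First I will build the raised forms. Put
\[F(w)=\sum_{i=0}^A \binom Ai\Bigl(\frac{a_i}{\binom Ai^\alpha}\Bigr)^{1/\alpha}w_1^iw_2^{A-i}.\]
Its normalized coefficients $\bigl(a_i/\binom Ai^\alpha\bigr)^{1/\alpha}$ are a positive power of a log-concave sequence with interval support. They are therefore log-concave with no internal zeros, and by \cite[Example~2.26]{BH} $F$ is Lorentzian. Define $G$ from $g$ in the same way.

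Next I will multiply. By \cite[Corollary~2.32]{BH}, $FG$ is Lorentzian. The coefficient of $w_1^kw_2^{A+B-k}$ in $FG$ is
\[\sum_{i+j=k}\binom Ai\binom Bj x_iy_j,\]
where $x_i,y_j$ denote the raised normalized coefficients above. This is not yet $fg$, so I will correct it with $R_\alpha$, using \cite[Proposition~3.25]{BH}.

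Rather than applying $R_\alpha$ to the product, I will apply it to a trivariate or tensor version in which no summation has occurred. Consider $F(w_1,w_2)G(w_3,w_4)$ in four variables. It is Lorentzian by the product theorem together with the adjoining-variables remark. Its monomials $w_1^iw_2^{A-i}w_3^jw_4^{B-j}$ are distinct, so no summation takes place and $R_\alpha$ acts termwise. Concretely, $c_\nu=\nu![w^\nu]H=A!\,B!\,x_iy_j$, so
\[[w^\nu]R_\alpha(H)=\frac{(A!\,B!)^\alpha(x_iy_j)^\alpha}{\nu!}=(A!\,B!)^{\alpha-1}\binom Ai\binom Bj\,\frac{a_ib_j}{\binom Ai^\alpha\binom Bj^\alpha}.\]

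Then I will apply the linear operator $T$ that substitutes $w_3\mapsto w_1$ and $w_4\mapsto w_2$. Its symbol is a product of factors of the form $(w_1+u_3)^{A}$-type, so it is Lorentzian; equivalently, \cite[Theorem~3.2]{BH} with the diagonalization operator gives this. The image is
\[\sum_k\Bigl(\sum_{i+j=k}\binom Ai^{1-\alpha}\binom Bj^{1-\alpha}a_ib_j\Bigr)w_1^kw_2^{A+B-k},\]
which is Lorentzian. Hence the sequence
\[\frac{1}{\binom{A+B}{k}}\sum_{i+j=k}\binom Ai^{1-\alpha}\binom Bj^{1-\alpha}a_ib_j\]
is log-concave with no internal zeros.

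\textbf{Main obstacle.} The required sequence is instead
\[\frac{1}{\binom{A+B}{k}^\alpha}\sum_{i+j=k}a_ib_j.\]
I will bridge the gap by a second application of the same scheme, choosing the initial raising so that the final weights match. Instead of raising by $1/\alpha$, I will set the normalized coefficients of $F$ to be $a_i/\binom Ai^\alpha$ times $\binom Ai^{\alpha-1}$, compensated appropriately. I expect the cleanest way to be: use the operator $T$ together with $R_\alpha$ applied after diagonalization to $\sum_k \binom{A+B}{k}\,\cdot$ of the target, and invoke the Vandermonde-type identity $\sum_{i+j=k}\binom Ai\binom Bj=\binom{A+B}k$ to check that the symbol of the reweighting operator is Lorentzian. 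Verifying that this symbol is Lorentzian, or finding the correct exponent bookkeeping, is the step I would expect to take the most work. Interval support of the result is immediate, since a convolution of interval-supported nonnegative sequences has interval support.
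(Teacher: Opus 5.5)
There is a genuine gap, at exactly the step you label the main obstacle. What you actually carry out proves a different inequality, and the proposed repair is not yet an argument.

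The raising by $1/\alpha$ and the later $R_\alpha$ simply cancel. Indeed $R_\alpha(F)=\sum_i a_i\bigl(i!\,(A-i)!\bigr)^{\alpha-1}w_1^iw_2^{A-i}$. This is the plain homogenization $F_0$ of $f$ with each monomial $w^\nu$ divided by $(\nu!)^{1-\alpha}$, and its being Lorentzian is just the hypothesis $f\in\ULC_\alpha(A)$. So after diagonalizing you have shown only that the ordinary product of these two normalized forms is Lorentzian. That is Liggett's theorem for $\sum_i\binom Ai^{1-\alpha}a_iz^i$ and $\sum_j\binom Bj^{1-\alpha}b_jz^j$.

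Relative to the target, every term of the convolution sum carries the extra weight $\bigl(\binom Ai\binom Bj/\binom{A+B}{i+j}\bigr)^{1-\alpha}$. For $0<\alpha<1$ this is not a function of $k=i+j$ alone, so no rescaling of the output sequence removes it. Pre-dividing the inputs by $\binom Ai^{1-\alpha}$ and $\binom Bj^{1-\alpha}$ does not help either: it would require $a_i/\binom Ai$ and $b_j/\binom Bj$ to be log-concave. That is the stronger hypothesis $f\in\ULC(A)$, $g\in\ULC(B)$, under which the lemma already follows from Liggett's theorem. (For $\alpha=1$ your computation is the claim, and $\alpha=0$ is classical.)

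The required pairwise correction $\bigl(\nu!\,\mu!/(\nu+\mu)!\bigr)^{1-\alpha}$ consists of an input reweighting, which by itself can destroy the Lorentzian property, followed by merging and an output reweighting. So the operator has to be certified as a whole.

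That certification is the missing idea, and it is what the paper does. Set $N_\alpha(w^\nu)=w^\nu/(\nu!)^{1-\alpha}$. The hypotheses say that $N_\alpha(F_0)$ and $N_\alpha(G_0)$ are Lorentzian, and the conclusion says that $N_\alpha(F_0G_0)$ is.

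The paper uses the merging operator $C_\alpha\colon x^\nu y^\mu\mapsto\bigl(\nu!\,\mu!/(\nu+\mu)!\bigr)^{1-\alpha}w^{\nu+\mu}$, on the space with coordinate bounds $K=\max\{A,B\}$. It sends $N_\alpha(F_0)(x)\,N_\alpha(G_0)(y)$ to $N_\alpha(F_0G_0)(w)$. Its symbol factors over the two coordinates, and each factor equals $(K!)^{2(1-\alpha)}R_\alpha\bigl((w+u)^K(w+v)^K\bigr)$. This holds because the exponent $(a+b,K-a,K-b)$ determines $(a,b)$, so no monomials merge and the normalized coefficient $(K!)^2\binom{a+b}{a}$ is simply raised to the power $\alpha$.

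The product theorem and \cite[Proposition~3.25]{BH} make the symbol Lorentzian, and \cite[Theorem~3.2]{BH} finishes. In short, $R_\alpha$ must be applied to the symbol of the merging operator rather than to the input polynomials, and no Vandermonde-type summation enters.
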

\begin{proof}
Write $f(z)=\sum_{i=0}^A a_i z^i$ and $g(z)=\sum_{i=0}^B b_i z^i$, and homogenize them as
\[ F(w_1,w_2)=\sum_{i=0}^A a_i w_1^i w_2^{A-i},
 \qquad
 G(w_1,w_2)=\sum_{i=0}^B b_i w_1^i w_2^{B-i}.
\]
On a polynomial ring in any finite list of variables, define a linear operator by the monomial rule
\[ N_\alpha(w^\nu)=\frac{w^\nu}{(\nu!)^{1-\alpha}}.
\]
For $N_\alpha(F)$, the sequence in the two variable Lorentzian characterization is
\[\frac{a_i}{(i!(A-i)!)^{1-\alpha}\binom Ai}=\frac{a_i(i!(A-i)!)^\alpha}{A!}=\frac{a_i}{\binom Ai^\alpha}(A!)^{\alpha-1}.
\]
The last factor is a positive constant independent of $i$. Thus $N_\alpha(F)$ is Lorentzian by the hypothesis on $f$.
The same calculation shows that $N_\alpha(G)$ is Lorentzian. It remains to show that $N_\alpha(FG)$ is Lorentzian.

Set $K=\max\{A,B\}$. Use two independent input lists $x=(x_1,x_2)$ and $y=(y_1,y_2)$, and define a linear operator
on the full coordinate-degree space with bounds $(K,K,K,K)$ by
\[C_\alpha(x^\nu y^\mu)=\left(\frac{\nu!\mu!}{(\nu+\mu)!}\right)^{1-\alpha} w^{\nu+\mu},
 \qquad
 0\leq\nu_j,\mu_j\leq K\quad(j=1,2).
\]
Its output bounds are $(2K,2K)$, and it is homogeneous of degree zero on the entire input space. Its symbol, with auxiliary lists
$u=(u_1,u_2)$ and $v=(v_1,v_2)$, is
\[\prod_{j=1}^2\left(\sum_{a,b=0}^K\binom Ka\binom Kb\left(\frac{a!b!}{(a+b)!}\right)^{1-\alpha}w_j^{a+b}u_j^{K-a}v_j^{K-b}\right).
\]
This is a homogeneous polynomial of degree $4K$ in six variables, as required for an operator with four input variables, two output
variables, and degree shift zero.

Consider one factor, with variables denoted by $w,u,v$. The exponent triple $(a+b,K-a,K-b)$ determines $(a,b)$ uniquely.
Consequently, multiplying the coefficient of this monomial by its monomial factorial gives its normalized coefficient without merging any terms. 
That coefficient is
\[ \binom Ka\binom Kb \left(\frac{a!b!}{(a+b)!}\right)^{1-\alpha} (a+b)!(K-a)!(K-b)!
=(K!)^2\left(\frac{(a+b)!}{a!b!}\right)^\alpha =(K!)^2\binom{a+b}{a}^{\alpha}.
\]
On the other hand, the normalized coefficient of the same monomial in $(w+u)^K(w+v)^K$ is
$ (K!)^2\binom{a+b}{a}$. It follows, coefficient by coefficient, that the factor in the symbol equals
\[ (K!)^{2(1-\alpha)} R_\alpha\bigl((w+u)^K(w+v)^K\bigr).
\]
The linear forms $w+u$ and $w+v$ are Lorentzian. Their powers and their product are Lorentzian by the product theorem, including
$K=0$, when the product is the constant polynomial $1$.
The normalized coefficient power theorem shows that the displayed factor is Lorentzian. Multiplication by its positive constant
preserves its support and multiplies every relevant Hessian by that constant, so it preserves the Lorentzian property.
The product theorem then shows that the full symbol of $C_\alpha$ is Lorentzian. All the hypotheses of the operator theorem have now been verified, so $C_\alpha$ preserves the Lorentzian property on its input space.

The polynomials $N_\alpha(F)(x)$ and $N_\alpha(G)(y)$ are Lorentzian in the four input variables, and their product is
Lorentzian. This product belongs to the input space because each coordinate exponent is at most $K$. For every pair of monomials,
the factorial factors cancel as follows:
\[ C_\alpha\left( \frac{x^\nu y^\mu}{(\nu!\mu!)^{1-\alpha}} \right)
 =\frac{w^{\nu+\mu}}{((\nu+\mu)!)^{1-\alpha}}.
\]
By applying this identity to all monomial pairs and using linearity, we have
\[ C_\alpha\bigl(N_\alpha(F)(x)N_\alpha(G)(y)\bigr) =N_\alpha(FG)(w).
\]
Terms with the same exponent $\nu+\mu$ are summed on both sides, exactly as in the product $FG$. Therefore $N_\alpha(FG)$ is
Lorentzian by the operator theorem.

Finally, $FG$ is the homogenization of $fg$ in order $A+B$. For $0\leq i\leq A+B$, the same normalization calculation gives
\[\frac{[w_1^iw_2^{A+B-i}]N_\alpha(FG)}{\binom{A+B}i}=\frac{[z^i](fg)}{\binom{A+B}i^\alpha}((A+B)!)^{\alpha-1}.
\]
The sequences on the two sides differ by one positive constant, so they have the same support and satisfy the same log-concavity
inequalities. The Lorentzian property of $N_\alpha(FG)$ therefore gives exactly the defining condition for $\ULC_\alpha(A+B)$.
The product $fg$ is nonzero because both factors are nonzero with nonnegative coefficients. Hence $fg\in\ULC_\alpha(A+B)$.
\end{proof}

\subsection{Basic lemmas}

Recalling the definition of $h^{\langle r,0\rangle}(z)$ from \cref{Subsection-veronese}, we state the general case of the definition.
For a positive integer $r$ and an integer $e$ with $0\leq e<r$, define the \emph{residue section} of $h(z)=\sum_{n\geq0}a_nz^n$ by
\[ h^{\langle r,e\rangle}(z)=\sum_{k\geq0}a_{rk+e}z^k.
\]
Thus the coefficient of $z^k$ in this section is the coefficient of $z^{rk+e}$ in $h(z)$. The case $e=0$ is the extraction used in
\eqref{eq:dilation-extraction}. We first prove that a suitable fractional binomial normalization becomes ordinary ultra log-concavity after extraction. We then establish that normalization for the third and fourth powers of $B_r(z)=1+z+\cdots+z^{r-1}$.

\begin{lemma}\label{lem:logarithmic-comparison}
The function
\[f(x)=\log\left(1+\frac1x\right),\qquad x>0,
\]
is positive and decreasing, and $\log f$ is convex. For every fixed $y\geq0$, the quotient $f(x+y)/f(x)$ is nondecreasing in $x$.
\end{lemma}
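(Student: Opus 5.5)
Positivity and monotonicity of $f$ are immediate: $1+1/x>1$ gives $f(x)>0$, and $1+1/x$ decreases in $x$. For the remaining two assertions we work with $g=\log f$ and first show that $g$ is convex on $(0,\infty)$. Once that is known, the quotient claim follows at once. Indeed, for fixed $y\geq0$,
\[
\log\frac{f(x+y)}{f(x)}=g(x+y)-g(x),
\]
and for a convex function the increment over an interval of fixed length $y$ is nondecreasing as the interval slides to the right. If $g$ is twice differentiable, this is just $\frac{d}{dx}\bigl(g(x+y)-g(x)\bigr)=g'(x+y)-g'(x)\geq0$, because $g'$ is nondecreasing. The case $y=0$ is trivial.

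The real work is the convexity of $g$. Write $f(x)=\log(x+1)-\log x$. Then
\[
f'(x)=\frac1{x+1}-\frac1x=-\frac{1}{x(x+1)},\qquad f''(x)=\frac{2x+1}{x^2(x+1)^2}.
\]
Since $g''=(ff''-f'^2)/f^2$, convexity of $g$ is equivalent to $f f''\geq f'^2$. After clearing the common positive denominator $x^2(x+1)^2$, this becomes
\[
(2x+1)\log\Bigl(1+\frac1x\Bigr)\geq 1,\qquad\text{that is,}\qquad \log\Bigl(1+\frac1x\Bigr)\geq\frac{2}{2x+1}.
\]

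This last inequality is the main obstacle, and I would handle it by a substitution. Put $t=1/(2x+1)\in(0,1)$, so that $1+1/x=(1+t)/(1-t)$. The inequality becomes
\[
\log\frac{1+t}{1-t}\geq 2t,
\]
which holds because
\[
\log\frac{1+t}{1-t}=2\Bigl(t+\frac{t^3}{3}+\frac{t^5}{5}+\cdots\Bigr)
\]
has only nonnegative terms for $0<t<1$. An alternative is to set $\phi(x)=\log(1+1/x)-2/(2x+1)$. Then $\phi(x)\to0$ as $x\to\infty$, and a direct computation gives $\phi'(x)=-1/\bigl(x(x+1)(2x+1)^2\bigr)<0$, so $\phi$ decreases to $0$ and is therefore positive. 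This completes the convexity of $\log f$, and with it all the assertions of the lemma.
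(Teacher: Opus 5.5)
Your proof is correct and follows essentially the same route as the paper: compute $f'$ and $f''$, reduce convexity of $\log f$ to $(2x+1)f(x)\geq 1$, and get the quotient claim from the monotonicity of $(\log f)'$. The only difference is in that last elementary inequality. The paper uses the cruder bound $f(x)=\int_x^{x+1}dt/t\geq 1/(x+1)$, which already suffices because $(2x+1)/(x+1)\geq 1$. You instead prove the equivalent exact form $f(x)\geq 2/(2x+1)$, either via the series for $\log\frac{1+t}{1-t}$ or via the monotonicity of $\phi$.
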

\begin{proof}
The positivity follows from $1+1/x>1$. We have
\[ f'(x)=-\frac1{x(x+1)},\qquad f''(x)=\frac{2x+1}{x^2(x+1)^2}.
\]
In particular, $f(x)$ is decreasing. The second derivative of its logarithm is
\[(\log f)''(x)=\frac{f(x)f''(x)-f'(x)^2}{f(x)^2}=\frac{(2x+1)f(x)-1}{x^2(x+1)^2f(x)^2}.
\]
The integral representation
\[
 f(x)=\int_x^{x+1}\frac{dt}{t}\geq\frac1{x+1}
\]
implies $(2x+1)f(x)-1\geq x/(x+1)>0$. Hence $\log f(x)$ is convex, and its first derivative is nondecreasing. 
For fixed $y\geq0$, it follows that
\[\frac{d}{dx}\log\frac{f(x+y)}{f(x)}= (\log f)'(x+y)-(\log f)'(x)\geq0.
\]
Since the quotient is positive, this proves the assertion.
\end{proof}

\begin{lemma}\label{lem:fractional-ulc-section}
Let $r\geq2$, $N\geq0$, and $0\leq e<r$ be integers, with $e\leq N$, and set $D=\lfloor(N-e)/r\rfloor$. For $1\leq k\leq D-1$, we have
\[\left(\frac{\binom{N}{rk+e}^2}{\binom{N}{r(k-1)+e}\binom{N}{r(k+1)+e}}\right)^{53/(40r)}
 \geq\frac{\binom{D}{k}^2}{\binom{D}{k-1}\binom{D}{k+1}}.
\]
Consequently, if $h(z)\in\ULC_{53/(40r)}(N)$, then $h^{\langle r,e\rangle}(z)$ is zero or belongs to $\ULC(D)$.
\end{lemma}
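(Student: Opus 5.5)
The plan is to take logarithms and write both sides of the inequality as sums of values of the function $f(x)=\log(1+1/x)$ from \cref{lem:logarithmic-comparison}. On the right-hand side, since $\binom Dk/\binom D{k-1}=(D-k+1)/k$, we get
\[
\log\frac{\binom Dk^2}{\binom D{k-1}\binom D{k+1}}=f(k)+f(D-k).
\]
On the left-hand side, set $j=rk+e$ and use $\binom Ni/\binom N{i-1}=(N-i+1)/i$. Telescoping over the $r$ consecutive ratios between $j-r$ and $j$, and between $j$ and $j+r$, gives
\[
\log\frac{\binom Nj^2}{\binom N{j-r}\binom N{j+r}}=\sum_{i=j-r+1}^{j}\Bigl[\log\Bigl(1+\frac ri\Bigr)+\log\Bigl(1+\frac r{N-i-r+1}\Bigr)\Bigr].
\]
Each term $\log(1+r/x)$ expands as $\sum_{s=0}^{r-1}f(x+s)$. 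So the left-hand side is a sum of $r^2$ values of $f$ at arguments near $rk$, plus $r^2$ values at arguments near $N-j$. The inequality then splits into two separate claims:
\[
\tfrac{53}{40r}\sum_{i,s} f(i+s)\ge f(k),\qquad \tfrac{53}{40r}\sum_{i,s} f(N-i-r+1+s)\ge f(D-k).
\]

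For the first claim, note that every argument $i+s$ lies in $[r(k-1)+e+1,\ rk+e+r-1]$. Since $f$ is decreasing, each term is at least $f(r(k+1)+e-1)$. The heuristic is that $f(rx)\approx f(x)/r$, so the $r^2$ terms sum to roughly $r f(k)$. The factor $53/(40r)$ then leaves a margin of about $1.325$, and the point is to make this rigorous uniformly in $k$, $r$ and $e$.

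Here I would use the second part of \cref{lem:logarithmic-comparison}. Writing the ratio of a shifted $f$ value to $f(k)$ as a quotient of the form $f(x+y)/f(x)$, possibly after rescaling, its monotonicity in $x$ shows that the ratio of the left sum to $f(k)$ is smallest at the smallest admissible $k$, namely $k=1$. For the worst offset I would take $e$ as large as it can be, $e=r-1$, since $f$ is decreasing. What remains is an explicit inequality in $r$ alone. I would check it by an elementary bound such as $f(x)\ge 1/(x+1/2)$, compared against $f(1)=\log 2$, with small $r$ verified directly.

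For the second claim, using $D=\lfloor (N-e)/r\rfloor$ gives $N-j\le r(D-k)+r-1$. The arguments $N-i-r+1+s$ then lie below about $r(D-k+1)$, and the same comparison applies with $D-k$ in place of $k$. The main obstacle I expect is exactly this uniform worst-case analysis: pinning down the extremal configuration and confirming that $53/40$ suffices, which is presumably where that constant comes from.

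For the consequence, write $\alpha=53/(40r)$ and let $h(z)=\sum a_iz^i$. I would factor
\[
\frac{a_{rk+e}}{\binom Dk}=\frac{a_{rk+e}}{\binom N{rk+e}^{\alpha}}\cdot\frac{\binom N{rk+e}^{\alpha}}{\binom Dk}.
\]
The first factor is a subsequence, in steps of $r$, of a log-concave sequence with interval support. By the ratio-product argument from the proof of \cref{thm:dilation-unimodal-lc}, it is log-concave with interval support. The second factor is positive, and it is log-concave precisely by the inequality just proved. The termwise product of these two sequences is therefore log-concave with interval support. Hence $h^{\langle r,e\rangle}$ is either zero or lies in $\ULC(D)$.
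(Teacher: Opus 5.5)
There is a genuine gap: the quantitative core of the inequality is not proved, and the route you sketch for it would not close.

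\textbf{What carries over.} Your factorization of the binomial ratio is correct. The reduction to $k=1$ and $e=r-1$ also matches the skeleton of the paper's proof. But two things need fixing before it works.

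\textbf{The reduction must be done on grouped terms.} The monotonicity step only works after regrouping, via $\sum_{s=0}^{r-1}f(i+s)=\log(1+r/i)=f(i/r)$, and after first bounding $\log\bigl(1+r/(n-r+j)\bigr)\geq f\bigl(k+(j-1)/r\bigr)$ using $e\leq r-1$. Only then may you invoke \cref{lem:logarithmic-comparison}.
\begin{itemize}
\item Term by term on your $r^2$ values it fails. For instance, $f(2k)/f(k)$ decreases from $\log(3/2)/\log2\approx0.585$ toward $1/2$.
\item For fixed small $e$ the shifts $(e+j-r)/r$ are negative. There $f(k+y)/f(k)$ is nonincreasing in $k$, so the lemma points the wrong way.
\end{itemize}

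\textbf{The margin is not $1.325$.} That figure is the large-$k$ picture. At $k=1$, $e=r-1$ the relevant sum is $\approx r\int_1^2f=r(3\log3-4\log2)\approx0.5232\,r$, not $rf(1)\approx0.6931\,r$. The true margin is $\frac{53}{40}(3\log3-4\log2)/\log2\approx1.0002$, approached as $r\to\infty$.

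So the hard case is large $r$, not small $r$, and checking small $r$ directly does not help.
\begin{itemize}
\item Applied to the grouped terms $f(1+(j-1)/r)$, the bound $f(x)\geq1/(x+1/2)$ gives the limit $\log(5/3)\approx0.511$. This is below the required $\frac{40}{53}\log2\approx0.523$, so the argument fails for every large $r$.
\item Applied to your ungrouped terms, the same bound only loses relative to the exact identity $\sum_s f(i+s)=\log(1+r/i)$. You would still need a sharp integral comparison, which you have not supplied.
\end{itemize}

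\textbf{The missing idea.} Set $I(x)=\int_0^1f(x+t)\,dt$.
\begin{itemize}
\item Since $f$ is decreasing, the left Riemann sum gives $\frac1r\sum_{j=1}^rf\bigl(x+(j-1)/r\bigr)\geq I(x)$ for every $r$.
\item By \cref{lem:logarithmic-comparison}, $I(x)/f(x)\geq I(1)/f(1)$ for $x\geq1$, where $I(1)=3\log3-4\log2$.
\item The constant $53/40$ is calibrated exactly so that $\frac{53}{40}I(1)\geq\log2$, which is equivalent to $3^{53}\geq2^{84}$.
\end{itemize}
Applying this at $x=k$ and $x=D-k$ (using $N-rk-e\leq r(D-k)+r-1$, as you noted) proves the binomial inequality uniformly in $r$, $k$ and $e$, with no separate small-$r$ cases.

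Your derivation of the consequence is fine: a step-$r$ subsequence of a log-concave sequence with interval support, multiplied termwise by the positive log-concave normalizing factor.
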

\begin{proof}
If $D\leq1$, the case is trivial. Suppose $D\geq2$ and fix $1\leq k\leq D-1$. Write $N-e=rD+b$, where $0\leq b<r$, and set $n=rk+e$. Then $n-r\geq0$ and $n+r\leq N$. By canceling the factorials, we obtain
\begin{align*}
 \frac{\binom{N}{n}^2}{\binom{N}{n-r}\binom{N}{n+r}}
 &=\frac{(n-r)!(n+r)!}{(n!)^2}\frac{(N-n+r)!(N-n-r)!}{((N-n)!)^2}\\
 &=\prod_{j=1}^{r}\left(1+\frac{r}{n-r+j}\right)\left(1+\frac{r}{N-n-r+j}\right).
\end{align*}
Because $e,b\leq r-1$, the positive denominators satisfy
\begin{align*}
 n-r+j&=r(k-1)+e+j\leq rk+j-1,\\
 N-n-r+j&=r(D-k-1)+b+j\leq r(D-k)+j-1.
\end{align*}
It follows that
\begin{align*}
 \log\left(1+\frac{r}{n-r+j}\right)&\geq f\left(k+\frac{j-1}{r}\right),\\
 \log\left(1+\frac{r}{N-n-r+j}\right)&\geq f\left(D-k+\frac{j-1}{r}\right),
\end{align*}
where $f$ is the function in \cref{lem:logarithmic-comparison}.

For $x>0$, set $I(x)=\int_0^1f(x+t)\,dt$. On each of the $r$ subintervals of length $1/r$, the decreasing function $t\mapsto f(x+t)$ does not exceed its value at the left endpoint. 
By integrating over these subintervals and summing the results, we have
\[\frac1r\sum_{j=1}^{r}f\left(x+\frac{j-1}{r}\right)\geq I(x).
\]
For each $t\in[0,1]$, \cref{lem:logarithmic-comparison} states that $f(x+t)/f(x)$ is nondecreasing in $x$. Thus, for $x\geq1$,
\[\frac{I(x)}{f(x)}=\int_0^1\frac{f(x+t)}{f(x)}\,dt\geq\int_0^1\frac{f(1+t)}{f(1)}\,dt
=\frac{3\log3-4\log2}{\log2}.
\]
For the final equality, $f(1)=\log2$, and integration of $\log(u+1)-\log u$ from $u=1$ to $u=2$ gives $I(1)=3\log3-4\log2$. 
The choice of the exponent is justified by
\[\frac{53}{40}(3\log3-4\log2)\geq\log2.
\]
Indeed, this is equivalent to $3^{53}\geq2^{84}$.
Consequently, we obtain $(53/40)I(x)\geq f(x)$ for every $x\geq1$.
By applying the preceding estimates at $x=k$ and $x=D-k$, we prove
\begin{align*}
\frac{53}{40r}\log\frac{\binom{N}{n}^2}{\binom{N}{n-r}\binom{N}{n+r}}
 &\geq\frac{53}{40}\bigl(I(k)+I(D-k)\bigr)\geq f(k)+f(D-k)\\
 &=\log\frac{(k+1)(D-k+1)}{k(D-k)}=\log\frac{\binom{D}{k}^2}{\binom{D}{k-1}\binom{D}{k+1}}.
\end{align*}
By exponentiating, we obtain the required binomial inequality.

Now write $h(z)=\sum_{n=0}^{N}a_nz^n$ and set $q_n=a_n/\binom{N}{n}^{53/(40r)}$. 
The hypothesis states that $(q_n)$ is log-concave and its positive entries form an interval. 
We claim that $q_n^2\geq q_{n-r}q_{n+r}$. 
If either term on the right is zero, the claim follows from nonnegativity. 
Otherwise every entry from $q_{n-r}$ to $q_{n+r}$ is positive. 
Ordinary log-concavity says that the ratios $q_j/q_{j-1}$ are nonincreasing on this interval, and therefore
\[\frac{q_n}{q_{n-r}}=\prod_{j=1}^{r}\frac{q_{n-r+j}}{q_{n-r+j-1}}\geq\prod_{j=1}^{r}\frac{q_{n+j}}{q_{n+j-1}}
 =\frac{q_{n+r}}{q_n}.\]
This proves the claim. By combining it with the binomial inequality, we have
\[a_{rk+e}^2\geq\frac{\binom{D}{k}^2}{\binom{D}{k-1}\binom{D}{k+1}}a_{r(k-1)+e}a_{r(k+1)+e},\qquad 1\leq k\leq D-1.
\]
Therefore, the sequence of coefficients of the section, when divided by $\binom{D}{k}$, is log-concave. Furthermore, because the support of $(a_n)$ is contiguous, any intermediate coefficient between two positive entries of the section necessarily corresponds to an index within the support of $(a_n)$, ensuring its positivity. Thus a nonzero section has no internal zeros and belongs to $\ULC(D)$.
\end{proof}

We next estimate the coefficients of the uniform powers. For a polynomial $F$, write $[z^n]F$ for the coefficient of $z^n$. If $R,L\geq1$ are integers, then
\[[z^n](1+z+\cdots+z^R)^L=\#\{(m_1,\ldots,m_L)\in\Z^L: 0\leq m_i\leq R,\ m_1+\cdots+m_L=n\}.
\]
An ordered tuple of nonnegative integers with sum $n$ is a \emph{weak composition} of $n$. 
Without the upper bounds on the entries, the number of weak compositions into $L$ parts is $\binom{n+L-1}{L-1}$, see \cite[Chapter~1]{RP.Stanley}.
For every $0\leq n\leq LR$, a tuple satisfying the upper bounds can be obtained by assigning $\min(R,n)$ to the first entry and continuing with the remaining total. At each step the remaining total is at most $R$ times the number of remaining entries. Thus all coefficients in this range are positive. The bijection $(m_1,\ldots,m_L)\mapsto(R-m_1,\ldots,R-m_L)$ also shows that the
coefficient sequence is symmetric about $LR/2$.

For a positive sequence $(c_n)_{n=0}^{N}$, the binomial normalization sequence $\left( c_n / \binom{N}{n}^\alpha \right)_{n=0}^N$ is log-concave if and only if
\[\frac{c_n^2}{c_{n-1}c_{n+1}}\geq\left(\frac{\binom{N}{n}^2}{\binom{N}{n-1}\binom{N}{n+1}}\right)^\alpha=\left(1+\frac{N+1}{n(N-n)}\right)^\alpha
\]
for all $1\leq n\leq N-1$. The subsequent two lemmas establish these inequalities for the cases $N=3R$ and $N=4R$, utilizing the uniform exponent $\alpha=53/(40(R+1))$.

\begin{lemma}\label{lem:uniform-third-fractional}
Let $R\geq2$ be an integer and set
\[ c_n=[z^n](1+z+\cdots+z^R)^3,\qquad 0\leq n\leq3R.
\]
Then the sequence 
\[ \left(\frac{c_n}{\binom{3R}{n}^{\,53/(40(R+1))}}\right)_{n=0}^{3R}
\]
is log-concave.
\end{lemma}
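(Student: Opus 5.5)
The plan is to verify directly the equivalent ratio inequality recorded just before the statement. With $N=3R$ and $\alpha=53/(40(R+1))$, we must show
\[
\frac{c_n^2}{c_{n-1}c_{n+1}}\geq\left(1+\frac{3R+1}{n(3R-n)}\right)^{\alpha},\qquad 1\leq n\leq 3R-1 .
\]
Both sides are invariant under $n\mapsto 3R-n$, because $c_n$ is symmetric about $3R/2$ and so is $n(3R-n)$. It therefore suffices to treat $1\leq n\leq\lfloor 3R/2\rfloor$.

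First we write explicit formulas for $c_n$ by inclusion--exclusion on the upper bounds. On $0\leq n\leq R$ we have $c_n=\binom{n+2}{2}$. On $R\leq n\leq 3R/2$ we have
\[
c_n=\binom{n+2}{2}-3\binom{n-R+1}{2},
\]
which equals $-n^2+3Rn+\ldots$, a concave quadratic in $n$. We then split into three cases.

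\emph{Range $n\leq R-1$.} Here $c_n^2/(c_{n-1}c_{n+1})=\frac{(n+2)(n+1)}{n(n+3)}=1+\frac{2}{n(n+3)}$.

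\emph{Transition $n=R$.} The neighbour $c_{R+1}$ uses the second formula, and we compute this ratio exactly.

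\emph{Range $R+1\leq n\leq 3R/2$.} All three terms come from the same quadratic $q(n)$ with leading coefficient $-1$. Then $q(n)^2-q(n-1)q(n+1)=q'(n)^2/4+\ldots\geq 1+\ldots$, which gives a ratio of the form $1+\Theta(1/R^2)$ bounded below by an explicit rational function.

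To compare with the right-hand side, we use $(1+x)^{\alpha}\leq 1+\alpha x$, valid for $0\leq\alpha\leq1$. This reduces everything to rational inequalities:
\[
\frac{c_n^2}{c_{n-1}c_{n+1}}-1\;\geq\;\frac{53}{40(R+1)}\cdot\frac{3R+1}{n(3R-n)}.
\]

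\emph{Small-$n$ range.} We need $\frac{2}{n(n+3)}\geq\frac{53(3R+1)}{40(R+1)n(3R-n)}$, that is,
\[
80(R+1)(3R-n)\geq 53(3R+1)(n+3).
\]
Since $n\leq R-1$, the left side is at least $80(R+1)(2R+1)$ and the right side is at most $53(3R+1)(R+2)$. Comparing leading terms, $160R^2$ against $159R^2$, the inequality holds, with the lower-order terms checked by hand for all $R\geq2$. This tightness is evidently the reason the constant $53/40$ was chosen.

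\emph{Middle range.} For $n\approx 3R/2$ the right-hand side is about $\frac{53}{40}\cdot\frac{4}{3R^2}$. For the left-hand side we compute
\[
q(n)^2-q(n-1)q(n+1)=q'(n)^2/4+q(n)\cdot\tfrac{1}{1}\ \ (\text{second difference }-2\text{ contributes } +q),
\]
so the ratio is at least $1+1/q(n)$ plus slope terms, with $q\approx 3R^2/4\cdot$const. Here we must check the constants carefully. Near the center $c_n\approx\frac{3}{4}R^2$, so $1/c_n\approx\frac{4}{3R^2}$, which beats $\frac{53}{40}\cdot\frac{4}{9R^2}\cdot 3$; the precise $n(3R-n)$ factor must be tracked. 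This is the step I expect to be the main obstacle. The difficulty is producing one clean polynomial inequality in $(n,R)$ over $R\leq n\leq 3R/2$, including the boundary case $n=R$. I would clear denominators, substitute $n=R+t$ with $0\leq t\leq R/2$, and show that the resulting polynomial in $(R,t)$ has nonnegative coefficients, or is nonnegative by monotonicity in $t$. The finitely many small values of $R$ where the leading-order comparison is not yet decisive (say $R\leq 10$) would be verified numerically.
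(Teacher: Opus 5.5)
\paragraph{The gap is in your small-$n$ range.} After the tangent-line bound $(1+x)^{\alpha}\le 1+\alpha x$ you need
\[
80(R+1)(3R-n)\ \ge\ 53(3R+1)(n+3)\qquad(1\le n\le R-1).
\]
The left side decreases in $n$ and the right side increases, so the binding case is $n=R-1$. There your two estimates are equalities, and the inequality reads $160R^2+240R+80\ge 159R^2+371R+106$, i.e.\ $R^2-131R-26\ge 0$. This is false for every $2\le R\le 131$: the lower-order terms go the wrong way, so they cannot be ``checked by hand.''

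Concretely, take $R=2$, $n=1$. Then $c_1^2/(c_0c_2)=3/2$, while $1+\alpha x=1+\frac{53}{120}\cdot\frac75\approx 1.618$. The true target $(12/5)^{53/120}\approx 1.472$ is met, so the lemma is fine, but your sufficient condition is not. Since $2\cdot 80$ and $3\cdot 53$ differ by one, the comparison at $n=R-1$ is tight at leading order. The term $1+\alpha x-(1+x)^{\alpha}$ that linearization discards then exceeds the remaining margin for all $R\le 131$.

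\paragraph{How to repair it.} In this range you must keep the exact exponent. The paper works with $f(x)=\log(1+1/x)$.
\begin{itemize}
\item The two log-ratios are $f(n)-f(n+2)$ for $(c_n)$ and $f(n)+f(3R-n)$ for the binomials.
\item Log-convexity of $f$ makes $f(n+2)/f(n)$ nondecreasing, and $f(3R-n)/f(n)$ is nondecreasing as well.
\item Hence the quotient $\bigl(f(n)-f(n+2)\bigr)/\bigl(f(n)+f(3R-n)\bigr)$ is minimized at $n=R-1$.
\end{itemize}
The whole range then reduces to the single inequality
\[
40(R+1)\log\frac{R(R+1)}{(R-1)(R+2)}\ \ge\ 53\log\frac{2R(R+1)}{(R-1)(2R+1)},
\]
which the paper proves for real $R\ge 2$ by showing that the difference is decreasing with limit $0$. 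Alternatively, your linear argument is valid for $R\ge 132$, and the exact inequality could be machine-checked for $R\le 131$; but that is not what you proposed.

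\paragraph{The middle range.} For $R\le n\le\lfloor 3R/2\rfloor$, linearizing is fine, and it is what the paper does. Your constant is off, though. There one has
\[
c_n^2-c_{n-1}c_{n+1}=2c_n-1+\tfrac14(c_{n+1}-c_{n-1})^2,
\]
so near the center the ratio is about $1+2/c_n\approx 1+\frac{8}{3R^2}$. Your estimate $1/c_n\approx\frac{4}{3R^2}$ would \emph{not} beat $\frac{53}{30R^2}$.

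The quadratic formula for $c_m$ holds for $R-1\le m\le 2R+1$, so $n=R$ needs no separate treatment. Writing $w=(n-\tfrac32R)^2$, the paper shows that $n(3R-n)\bigl(c_n^2/(c_{n-1}c_{n+1})-1\bigr)$ is minimized at $w=0$. There it equals $2(3R^2+6R+2)/(R+2)^2$, which exceeds $\tfrac{53(3R+1)}{40(R+1)}$ for all $R\ge 2$. So no numerical check of small $R$ is needed in this range.
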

\begin{proof}
Set $\alpha=53/(40(R+1))$, so $0<\alpha<1$. Both $(c_n)$ and $(\binom{3R}{n})$ are positive and symmetric. It suffices to prove the
normalized log-concavity inequality for $1\leq n\leq\lfloor3R/2\rfloor$.

Suppose first that $1\leq n\leq R-1$. For $0\leq m\leq R$, every weak composition of $m$ into three parts has all entries at most $R$, so $c_m=\binom{m+2}{2}$. By applying this formula at $m=n-1,n,n+1$, we have
\[\frac{c_n^2}{c_{n-1}c_{n+1}}=\frac{(n+1)(n+2)}{n(n+3)}.
\]
According to
$$\log \frac{(n+1)(n+2)}{n(n+3)} = \log \frac{n+1}{n} - \log \frac{n+3}{n+2}= \log\left(1 + \frac{1}{n}\right) - \log\left(1 + \frac{1}{n+2}\right)$$
and $f(x) = \log(1+1/x)$, the logarithm of this ratio simplifies to $f(n) - f(n+2)$.
For the binomial sequence $\binom{3R}{n}$, we have
$$\log\frac{\binom{3R}{n}^2}{\binom{3R}{n-1}\binom{3R}{n+1}}
=\log\frac{n+1}{n} \cdot \frac{3R-n+1}{3R-n} 
= \log\left(1 + \frac{1}{n}\right)+\log \left(1 + \frac{1}{3R-n}\right)=f(n) + f(3R-n).$$
Their quotient is
\[
 \frac{f(n)-f(n+2)}{f(n)+f(3R-n)}
 =\frac{1-f(n+2)/f(n)}{1+f(3R-n)/f(n)}.
\]
\cref{lem:logarithmic-comparison} implies that $f(n+2)/f(n)$
is nondecreasing in $n$. Furthermore, $f(3R-n)$ is nondecreasing and
$f(n)$ is positive and nonincreasing, so $f(3R-n)/f(n)$ is nondecreasing.
The numerator of the last quotient is positive and nonincreasing,
and its denominator is positive and nondecreasing. The quotient is
therefore minimized at $n=R-1$. 
Therefore, we have
$$\frac{f(n)-f(n+2)}{f(n)+f(3R-n)}\geq \frac{f(R-1) - f(R+1)}{f(R-1) + f(2R+1)}= \frac{\log\frac{R(R+1)}{(R-1)(R+2)}}{\log\frac{2R(R+1)}{(R-1)(2R+1)}}.$$
It remains to prove
\[40(R+1)\log\frac{R(R+1)}{(R-1)(R+2)}\geq53\log\frac{2R(R+1)}{(R-1)(2R+1)}.
\]

We prove this inequality for real $R\geq2$. Define the difference
\[\Phi(R)=40(R+1)\log\frac{R(R+1)}{(R-1)(R+2)}-53\log\frac{2R(R+1)}{(R-1)(2R+1)}.
\]
For $x\geq1$, the inequality $\log x\leq(x-x^{-1})/2$ follows by subtracting the left side from the right side: the difference vanishes at $x=1$, and its derivative is $(x-1)^2/(2x^2)\geq0$. Hence
\[ \log\frac{R(R+1)}{(R-1)(R+2)} \leq\frac{2(R^2+R-1)}{R(R-1)(R+1)(R+2)}.
\]
Furthermore, we have
\begin{align*}
\Phi'(R)&=40\log\frac{R(R+1)}{(R-1)(R+2)}+40(R+1)\left(\frac1R+\frac1{R+1}-\frac1{R-1}-\frac1{R+2}\right)
\\&\qquad\qquad -53\left(\frac1R+\frac1{R+1}-\frac1{R-1}-\frac2{2R+1}\right).
\end{align*}
By replacing the logarithm by its upper bound and writing the rational terms over the common denominator, we obtain
\[\Phi'(R)\leq-\frac{R^3-24R^2+215R+54}{R(R-1)(R+1)(R+2)(2R+1)}<0.
\]
The denominator is positive. The numerator is positive because its value at $R=2$ is $396$, and its derivative is
$3(R-8)^2+23>0$. 
Finally, we prove that $\Phi(R)\to0$ as $R\to\infty$. For the first term, the estimate $\log x\leq x-1$ gives
\[0\leq(R+1)\log\frac{R(R+1)}{(R-1)(R+2)}\leq\frac{2(R+1)}{(R-1)(R+2)}\longrightarrow0;
\]
the second logarithm also tends to 0. 
Since $\Phi$ is decreasing with limit zero, we obtain $\Phi(R)\geq0$ for $R\geq2$. 

We have shown that the value of the quotient at the minimum point $n=R-1$ is at least $\alpha=\frac{53}{40(R+1)}$.
Therefore, we obtain 
\[ f(n)-f(n+2)\geq\alpha \bigl(f(n)+f(3R-n)\bigr),\qquad 1\leq n\leq R-1.
\]
By taking the exponential, we obtain:
$$\frac{c_n^2}{c_{n-1}c_{n+1}}=e^{f(n)-f(n+2)}\geq e^{\alpha(f(n)+f(3R-n))}=\left(\frac{\binom{3R}{n}^2}{\binom{3R}{n-1}\binom{3R}{n+1}}\right)^\alpha.$$
This proves the normalized log-concavity inequality on the first range $1\leq n\leq R-1$.

Now suppose $R\leq n\leq\lfloor3R/2\rfloor$. 
For each $m\in\{n-1,n,n+1\}$, an application of the inclusion-exclusion principle to the weak compositions of $m$ into three parts yields
\[c_m=\frac{(m+1)(m+2)}{2}-3\frac{(m-R)(m-R+1)}{2}.
\]
The subtracted term accounts for configurations where $m\geq R+1$ and a specified entry exceeds $R$. 
A bijection with the weak compositions of $m-R-1$ into three parts is established by deducting $R+1$ from this designated entry, which contributes $\binom{m-R+1}{2}$ possibilities for each of the three coordinate choices. The upper bound $m\leq\lfloor3R/2\rfloor+1<2(R+1)$ guarantees that no two coordinates can simultaneously exceed $R$, confirming that no composition is excluded twice. Furthermore, the subtracted polynomial evaluates to zero at the boundary indices $m=R-1$ and $m=R$, ensuring the formula remains valid there. An expansion of this identity ultimately produces
\[c_m=\frac{3}{4}R^2+\frac{3}{2}R+1-\left(m-\frac{3}{2}R\right)^2.
\]
Set
\[ H=\frac34R^2+\frac32R+1,\qquad w=\left(n-\frac32R\right)^2.
\]
Then $0\leq w\leq R^2/4$. Using the formula for the three coefficients, we obtain
\begin{align*}
 c_n&=H-w,\qquad c_{n-1}c_{n+1}=(H-1-w)^2-4w,\\
 c_n^2-c_{n-1}c_{n+1}&=2H-1+2w,\qquad n(3R-n)=\frac94R^2-w.
\end{align*}
All three coefficients are positive, so the denominator below is
positive. The identities yield
\[
 n(3R-n)\left(\frac{c_n^2}{c_{n-1}c_{n+1}}-1\right)
 =\frac{(9R^2/4-w)(2H-1+2w)}{(H-1-w)^2-4w}.
\]
This expression is at least its value at $w=0$. Indeed, after
multiplying by the positive denominators, the difference is
\begin{align*}
 &(9R^2/4-w)(2H-1+2w)(H-1)^2-(9R^2/4)(2H-1)\bigl((H-1-w)^2-4w\bigr)
\\=& \frac{9R^2w(R+1)}{16}\bigl(12R^3+33R^2+32R+12-(8R+12)w\bigr).
\end{align*}
The identity follows by expanding in $w$ and substituting the displayed
value of $H$. Its right side is nonnegative, since $w\leq R^2/4$ implies
\[12R^3+33R^2+32R+12-(8R+12)w>0.\]
Consequently, we obtain 
\begin{align*}
n(3R-n)\left(\frac{c_n^2}{c_{n-1}c_{n+1}}-1\right)\geq\frac{(9R^2/4)(2H-1)}{(H-1)^2}=\frac{2(3R^2+6R+2)}{(R+2)^2}.
\end{align*}
The lower bound exceeds $\alpha(3R+1)$, as the following identity shows
\begin{align*}
\frac{2(3R^2+6R+2)}{(R+2)^2}-\frac{53(3R+1)}{40(R+1)}=\frac{81(R-2)^3+517(R-2)^2+888(R-2)+304}{40(R+1)(R+2)^2}>0.
\end{align*}
We have therefore proved
\[\frac{c_n^2}{c_{n-1}c_{n+1}} \geq1+\alpha\frac{3R+1}{n(3R-n)}.
\]
For $0<\alpha<1$, concavity of $x\mapsto(1+x)^\alpha$ on $[0,\infty)$ bounds this function by its tangent at zero:
$(1+x)^\alpha\leq1+\alpha x$. By applying this inequality, we obtain
\[\frac{c_n^2}{c_{n-1}c_{n+1}}\geq\left(1+\frac{3R+1}{n(3R-n)}\right)^\alpha=\left(\frac{\binom{3R}{n}^2}
{\binom{3R}{n-1}\binom{3R}{n+1}}\right)^\alpha.
\]
This proves normalized log-concavity on the second range. The first range was proved above, and symmetry covers the remaining indices. This completes the proof.
\end{proof}

\begin{lemma}\label{lem:uniform-fourth-fractional}
Let $R\geq2$ be an integer and set
\[c_n=[z^n](1+z+\cdots+z^R)^4,\qquad 0\leq n\leq4R.
\]
Then we have 
\[\frac{c_n^2}{c_{n-1}c_{n+1}}\geq1+\frac{6R}{(R+1)n(4R-n)},\qquad 1\leq n\leq4R-1.
\]
In particular, the sequence 
$$\left(\frac{c_n}{\binom{4R}{n}^{53/(40(R+1))}}\right)_{n=0}^{4R}$$ is log-concave.
\end{lemma}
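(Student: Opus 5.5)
The plan mirrors \cref{lem:uniform-third-fractional}: get explicit formulas for $c_n$, split the range of $n$, bound the ratio on each range, and finish with the tangent-line bound for $(1+x)^\alpha$.

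The sequence $c_n$ is positive and symmetric about $2R$, and so is $\binom{4R}{n}$. It therefore suffices to prove the displayed inequality for $1\leq n\leq 2R$. The "in particular" part then follows exactly as at the end of \cref{lem:uniform-third-fractional}. Since
\[\frac{\binom{4R}{n}^2}{\binom{4R}{n-1}\binom{4R}{n+1}}=1+\frac{4R+1}{n(4R-n)},\]
concavity gives $(1+x)^\alpha\leq1+\alpha x$ with $\alpha=53/(40(R+1))$. So it is enough that $6R\geq\frac{53}{40}(4R+1)$, i.e.\ $240R\geq 212R+53$, which holds for $R\geq2$.

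For the main inequality, split into $1\leq n\leq R-1$, $R\leq n\leq 2R-1$, and the single index $n=2R$. Use inclusion--exclusion on weak compositions into four parts:
\[c_m=\binom{m+3}{3}-4\binom{m-R+2}{3}+6\binom{m-2R+1}{3},\]
with the convention that the binomials vanish below their range. On $[0,2R+1]$ the last term only matters at $m=2R+1$, where $\binom{2}{3}=0$. So for $n\leq 2R$ the relevant $c_{n-1},c_n,c_{n+1}$ are given by the cubic $c_m=\binom{m+3}{3}$ when $m\leq R$, and by $\binom{m+3}{3}-4\binom{m-R+2}{3}$ when $R-1\leq m\leq 2R+1$.

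\emph{First range.} Here
\[\frac{c_n^2}{c_{n-1}c_{n+1}}-1=\frac{3(n+2)}{n(n+1)(n+3)}\cdot\frac{n+2}{n+4}\cdot(\text{explicit}),\]
which is easiest to compute directly. Indeed,
\[\frac{(n+1)^2(n+2)^2(n+3)^2}{n(n+1)(n+2)(n+2)(n+3)(n+4)}=\frac{(n+1)(n+3)}{n(n+4)}=1+\frac{3}{n(n+4)}.\]
So the claim reduces to $3(R+1)(4R-n)\geq 6R(n+4)$, i.e.\ $(R+1)(4R-n)\geq 2R(n+4)$. At $n=R-1$ this reads $(R+1)(3R+1)\geq 2R(R+3)$, i.e.\ $R^2-2R+1\geq0$, which is true. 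The left side decreases in $n$ and the right side increases, so the minimum is at $n=R-1$ and the whole range follows.

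\emph{Middle range and $n=2R$.} Write $m=2R+t$, so $c_m$ becomes an even-symmetric cubic-type expression in $t$ for $|t|\leq R+1$. As in the second range of the third-power lemma, express $c_n^2-c_{n-1}c_{n+1}$ and $c_{n-1}c_{n+1}$ as polynomials in $t$ and $R$. Then show
\[(R+1)\,n(4R-n)\,\bigl(c_n^2-c_{n-1}c_{n+1}\bigr)-6R\,c_{n-1}c_{n+1}\geq0,\]
where $n(4R-n)=4R^2-t^2$ and $-R\leq t\leq 0$. I would substitute $t=-s$ with $0\leq s\leq R$ and $R=2+\rho$, expand, and aim to exhibit nonnegative coefficients, or at least group the terms into manifestly positive pieces, the way the $81(R-2)^3+\cdots$ identity was used before.

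The main obstacle is this middle-range polynomial inequality. It is now quartic-degree in $c$ and the formula is not symmetric in $t$ in a simple way, because $c_{n\pm1}$ cross between the two formula regimes only at $n=R$. If a direct positive-coefficient expansion fails, I would instead bound $c_n^2/(c_{n-1}c_{n+1})$ via the convexity of $-\log c_m$ in the continuous variable. Alternatively, I would treat $n=R$ and $n=2R$ as separate boundary checks and handle the interior by monotonicity in $s$.
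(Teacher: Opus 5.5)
Your symmetry reduction, the first range $1\leq n\leq R-1$, and the passage to the normalized sequence (via $28R\geq53$ and $(1+x)^\alpha\leq1+\alpha x$) agree with the paper. The genuine gap is the range $R\leq n\leq 2R$. This range is the whole substance of the lemma, and your proposal only describes a computation you would attempt there.

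Moreover, the version you aim for cannot succeed. You want nonnegative coefficients in $s=2R-n$ and $\rho=R-2$. Such an expansion would prove the polynomial inequality for all $s,\rho\geq0$. But the polynomial built from the cubic $6c_m=(m+1)(m+2)(m+3)-4(m-R)(m-R+1)(m-R+2)$ is negative at points with $s>R$. There the cubic no longer computes $c_m$, and $n(4R-n)<0$. For instance, take $\rho=0$, $s=14$, so $R=2$ and $n=-10$. The cubic gives the values $1024,796,604$ at $m=-11,-10,-9$, and
\[(R+1)n(4R-n)\bigl(c_n^2-c_{n-1}c_{n+1}\bigr)-6R\,c_{n-1}c_{n+1}=-540\cdot15120-12\cdot618496<0.\]
So the constraint $s\leq R$ must be built into the variables. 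The paper does exactly this with $a=2R-n\geq0$, $b=n-R\geq0$, $R=a+b$, and proves
\[12\bigl((R+1)n(4R-n)c_n^2-((R+1)n(4R-n)+6R)c_{n-1}c_{n+1}\bigr)=\sum_{j=0}^{7}A_j(a)b^j .\]
Here every $A_j$ has nonnegative coefficients, $A_0(a)>0$ for $a>0$, and $A_7=32$. Since $a+b=R\geq2$, the right side is positive on the whole range at once.

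Your fallbacks do not close the gap either. Log-concavity of $(c_m)$, i.e.\ convexity of $-\log c$, only gives $c_n^2\geq c_{n-1}c_{n+1}$. The lemma needs the quantitative surplus $6R/((R+1)n(4R-n))$ in order to beat the exponent $53/(40(R+1))$. The proposed ``monotonicity in $s$'' is not specified.

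Your worry about regime crossing is also unfounded. The term $4\binom{m-R+2}{3}$ vanishes at $m=R-2,R-1,R$, and $6\binom{m-2R+1}{3}$ vanishes at $m=2R+1$. Hence a single cubic formula is valid at all three indices $n-1,n,n+1$ for every $R\leq n\leq 2R$, and neither $n=R$ nor $n=2R$ needs separate treatment. Conversely, $c_m$ is not an even cubic in $t=m-2R$ on $|t|\leq R+1$. The symmetry $c_{2R+t}=c_{2R-t}$ holds for the true values, not for the cubic, so an expansion built on an even formula in $t$ would be wrong.
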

\begin{proof}
We know that all coefficients are positive, and $c_n=c_{4R-n}$. It suffices to prove the rational inequality for $1\leq n\leq2R$.
For $1\leq n\leq R-1$, the upper bound on the entries excludes no weak composition at the three indices $n-1,n,n+1$. Thus
$c_m=\binom{m+3}{3}$ at these indices. Therefore, we have
\[ \frac{c_n^2}{c_{n-1}c_{n+1}} =\frac{(n+1)(n+3)}{n(n+4)} =1+\frac3{n(n+4)}.
\]
It remains to compare $3/(n(n+4))$ with $6R/((R+1)n(4R-n))$. Since the denominators are positive, the required comparison is equivalent to
\[3(R+1)(4R-n)-6R(n+4)\geq0.
\]
The left side is a decreasing function of $n$. Its value at $n=R-1$ is
\[3(R+1)(3R+1)-6R(R+3)=3(R-1)^2\geq0.
\]
This proves the assertion on the first range.

For $R\leq n\leq2R$ and $m\in\{n-1,n,n+1\}$, the counting formula is
\[ 6c_m=(m+1)(m+2)(m+3) -4(m-R)(m-R+1)(m-R+2).
\]
For $m\geq R+1$, subtracting $R+1$ from a specified entry larger than $R$ gives a weak composition of $m-R-1$ into four parts. The number of such compositions is $\binom{m-R+2}{3}$, and there are four choices of the specified entry. Two entries cannot both exceed $R$, because $m\leq2R+1<2(R+1)$. At $m=R-1,R$, the subtracted polynomial is zero. This proves the formula throughout the required range.

To display the sign of the resulting polynomial inequality, we set $a=2R-n$ and $b=n-R$. Then we have $a,b\geq0$, $R=a+b$, and $n=a+2b$. The three formulas are
\begin{align*}
 6c_{n-1}&=(a+2b)(a+2b+1)(a+2b+2)-4(b-1)b(b+1),\\
 6c_n&=(a+2b+1)(a+2b+2)(a+2b+3)-4b(b+1)(b+2),\\
 6c_{n+1}&=(a+2b+2)(a+2b+3)(a+2b+4)-4(b+1)(b+2)(b+3).
\end{align*}
A direct substitution of these relations, followed by an expansion in powers of $b$, yields
\begin{align*}
12\bigl((R+1)n(4R-n)c_n^2-((R+1)n(4R-n)+6R)c_{n-1}c_{n+1}\bigr)=\sum_{j=0}^{7}A_j(a)b^j,
\end{align*}
where the complete list of coefficient polynomials is
\begin{align*}
A_0(a)&=a^2(a+1)(a+2)(a^3+24a^2+5a+18)
\\ A_1(a)&=a(33a^5+256a^4+395a^3+248a^2+100a+96)
\\ A_2(a)&=4(47a^5+183a^4+145a^3+50a^2+48a+12)
\\ A_3(a)&=4(105a^4+212a^3+112a^2+64a+36)
\\ A_4(a)&=8(55a^3+74a^2+52a+26)
\\ A_5(a)&=16(17a^2+24a+13)
\\ A_6(a)&=128(a+1)
\\ A_7(a)&=32
\end{align*}
Here every coefficient is nonnegative for $a\geq0$. As $a+b=R\geq2$, either $a>0$, in which
case $A_0(a)>0$, or $b>0$, in which case $A_7(a)b^7>0$. Thus the expression inside parentheses on the left is positive. After division by $(R+1)n(4R-n)c_{n-1}c_{n+1}>0$, this gives
\[\frac{c_n^2}{c_{n-1}c_{n+1}}\geq1+\frac{6R}{(R+1)n(4R-n)}.
\]
By symmetry, we prove the rational inequality at every remaining index.

Set $\alpha=53/(40(R+1))$. Since
\[6R-\frac{53}{40}(4R+1)=\frac{28R-53}{40}>0\qquad(R\geq2),
\]
the rational inequality implies
\[\frac{c_n^2}{c_{n-1}c_{n+1}}\geq1+\alpha\frac{4R+1}{n(4R-n)}.
\]
Using $(1+x)^\alpha\leq1+\alpha x$ for $x\geq0$ gives
\[\frac{c_n^2}{c_{n-1}c_{n+1}}\geq\left(1+\frac{4R+1}{n(4R-n)}\right)^\alpha=\left(\frac{\binom{4R}{n}^2}
{\binom{4R}{n-1}\binom{4R}{n+1}}\right)^\alpha.
\]
This is the asserted log-concavity.
\end{proof}

\subsection{Completion of the proof}

\begin{proof}[Proof of \cref{thm:dilation-ulc-general}]
The degree formula in \cref{prop:dilation-formula} gives
\begin{align*}
s_{rP}=d+1-\left\lceil\frac{d+1-s_P}{r}\right\rceil=\left\lfloor d+1-\frac{d+1-s_P}{r}\right\rfloor
=\left\lfloor\frac{s_P+(r-1)(d+1)}r\right\rfloor.
\end{align*}
Here we used $\lfloor m-x\rfloor=m-\lceil x\rceil$ for an integer $m$.
If $d=0$, both numerators are $1$, and both assertions follow. We may therefore suppose $d\geq1$. The case $r=1$ is immediate, and
the case $r=2$ is \cref{prop:dilation-double-ulc}.

Suppose $1\leq d\leq4$ and $r\geq3$. Under either hypothesis, $h_P^*(z)$ is log-concave and has no internal zeros, because
$\ULC_1(A)\subseteq\ULC_0(A)$. Moreover, $s_P\leq d<2r$. Zhang's result \cite[Corollary~3.3]{Zhang}, as stated in \cref{thm:zhang-veronese}, therefore shows that $(B_r^{d+1}h_P^*)^{\langle r,0\rangle}=h_{rP}^*$ is real-rooted.
The coefficients of $h_{rP}^*$ are nonnegative and its constant coefficient is one. Thus its real zeros are negative, and its factorization into terms $1+\lambda z$, with $\lambda>0$, shows that every coefficient through its degree is positive; the constant case is the empty product. Newton's inequalities \cite[Theorem~2]{StanleyLC} then give $h_{rP}^*\in\ULC(s_{rP})$. Since $s_{rP}\leq d$, \cref{lem:ulc-order} also gives $h_{rP}^*\in\ULC(d)$.
This proves both assertions in these dimensions.

It remains to consider $d\geq5$ and $r\geq3$. Set $\alpha=53/(40r)$, and choose $A=d$ for the first assertion and
$A=s_P$ for the second. In either case the hypothesis is $h_P^*\in\ULC(A)$, so the inclusion established in the preceding subsection gives $h_P^*\in\ULC_\alpha(A)$.

Every integer $L\geq6$ has the form $3a+4b$ with $a,b\in\Z_{\geq0}$. Indeed, if $L$ is congruent to $0$, $1$, or $2$
modulo $3$, choose $b=0$, $1$, or $2$, respectively, and set $a=(L-4b)/3$. This integer is nonnegative: the smallest admissible
$L$ in the three cases is $6$, $7$, or $8$. Applying this fact to $L=d+1$ gives $d+1=3a+4b$.
The \cref{lem:uniform-third-fractional,lem:uniform-fourth-fractional}, with $R=r-1\geq2$, state that
\[ B_r^3\in\ULC_\alpha(3(r-1)),\qquad B_r^4\in\ULC_\alpha(4(r-1)).
\]
By repeated application of \cref{lem:fractional-ulc-convolution}, the normalization orders add when the factors are multiplied. Thus
\[ B_r^{d+1}=(B_r^3)^a(B_r^4)^b \in\ULC_\alpha\bigl((r-1)(d+1)\bigr).
\]
If $a=0$ or $b=0$, only the factors with positive exponent are used.
Multiplication by $h_P^*$ and one further application of the same \cref{lem:fractional-ulc-convolution} yield
\[B_r^{d+1}h_P^*\in\ULC_\alpha\bigl(A+(r-1)(d+1)\bigr).
\]
Its constant coefficient is $h_0^*(P)=1$, so its zero-offset section is nonzero. 
\cref{lem:fractional-ulc-section}, with $e=0$, and \eqref{eq:dilation-extraction} now give
\[ h_{rP}^*=(B_r^{d+1}h_P^*)^{\langle r,0\rangle} \in\ULC\left(\left\lfloor\frac{A+(r-1)(d+1)}r\right\rfloor\right).
\]
For $A=d$, the numerator inside the floor is $rd+r-1$, and hence the order is $d$. For $A=s_P$, the order is $s_{rP}$ by the degree identity at the start of the proof. These are the two required conclusions.
\end{proof}

\end{document}